\declaretheorem[parent=section]{theorem}
\declaretheorem[sibling=theorem]{proposition}
\declaretheorem[sibling=theorem]{lemma}
\declaretheorem[sibling=theorem]{corollary}
\declaretheorem[sibling=theorem,style=remark]{remark}
\declaretheorem[sibling=theorem,style=definition]{definition}
\declaretheorem[numbered=no,name=Theorem]{theorem*}
\declaretheorem[numbered=no,style=definition,name=Definition]{definition*}
\declaretheorem[numbered=no,name=Proposition]{proposition*}
\newcommand{\ARXIV}[1]{\href{https://arXiv.org/abs/#1}{arXiv:#1}}
\begin{document}

\bibliographystyle{amsalpha}



\renewcommand{\d}[1]{\,\ensuremath{\mathrm{d}} #1}
\renewcommand{\L}{\operatorname{L}}
\renewcommand{\P}{\operatorname{P}}
\newcommand{\D}{\operatorname{D}}
\newcommand{\E}{\operatorname{E}}
\newcommand{\Id}{\operatorname{Id}}
\newcommand{\I}{\operatorname{I}}
\newcommand{\var}{\operatorname{Var}}
\newcommand{\cov}{\operatorname{Cov}}
\newcommand{\mi}{\ensuremath{\mathrm{i}}}
\newcommand{\me}{\ensuremath{\mathrm{e}}}
\newcommand{\N}{\mathbb{N}}
\newcommand{\Z}{\mathbb{Z}}
\newcommand{\Q}{\mathbb{Q}}
\newcommand{\R}{\mathbb{R}}
\newcommand{\C}{\mathbb{C}}
\newcommand{\on}[1]{\operatorname{#1}}
\newcommand{\tr}{\on{tr}}
\newcommand{\norm}[1]{\left\lVert #1 \right\rVert}
\newcommand{\Ex}[1]{\operatorname{E}\! \left[ #1 \right]}
\newcommand{\abs}[1]{\left\vert #1 \right\rvert}
\newcommand{\Ker}[1]{\operatorname{ker}\left(#1\right)}

\author{Solesne Bourguin$^1$}
\address{$^1$Boston University, Department of Mathematics and Statistics,  111 Cummington Mall, Boston, MA 02215, USA}
\email{bourguin@math.bu.edu}
\author{Simon Campese$^2$}
\address{$^2$University of Luxembourg, Mathematics Research Unit,
 6 Avenue de la Fonte, 4364 Esch-sur-Alzette, Luxembourg}
\email{simon.campese@uni.lu}
\title[Approximation of Hilbert-valued Gaussians on Dirichlet structures]{Approximation of Hilbert-valued Gaussians on Dirichlet structures}
\thanks{S. Bourguin was supported in part by the Simons Foundation
  grant 635136}
\begin{abstract}
  We introduce a framework to derive quantitative central limit
  theorems in the context of non-linear approximation of Gaussian random variables taking values in a separable Hilbert space. In particular, our method provides an alternative to the usual (non-quantitative) finite dimensional distribution convergence and tightness argument for proving functional convergence of stochastic processes.
  We also derive four moments bounds for Hilbert-valued random variables with possibly infinite chaos expansion, which include, as special cases, all finite-dimensional four moments results for Gaussian approximation in a  diffusive context proved earlier by various authors.
  Our main ingredient is a combination of an infinite-dimensional version of Stein's method as developed by Shih and the so-called Gamma calculus.
  As an application, rates of convergence for the functional Breuer-Major theorem are established.
\end{abstract}
\subjclass[2010]{46G12, 46N30, 60B12, 60F17}
\keywords{Gaussian measures on Hilbert spaces; Dirichlet structures; Stein's method on
  Banach spaces; Gaussian approximation; probabilistic metrics;
  functional limit theorems; fourth moment conditions}
\maketitle

\section{Introduction}

Random variables taking values in Hilbert spaces play an important
role in many fields of mathematics and statistics, both at a
theoretical and applied level. For example, they arise naturally in
statistics, in particular in the field of functional data analysis or
machine learning (for example in the context of Reproducing Kernel Hilbert Spaces). An important and classical topic is the asymptotic analysis of sequences of such random variables.

In the linear case, i.e., when looking at normalized sums of i.i.d.\ random variables, the asymptotic behaviour is very well understood, with central limit theorems including error bounds being available in Banach or more general infinite-dimensional spaces (see~\cite{araujo-gine:1980:central-limit-theorem}). Here, (separable) Hilbert spaces have the distinguished property of being the only infinite-dimensional Banach spaces for which convergence of such sums is equivalent to finite variances (square integrability of the norms) of the components.

In the non-linear case, where the sum is replaced by a general transformation, much less is known, except when the dimension of the Hilbert space is finite. In this case, Nourdin and Peccati (\cite{nourdin-peccati:2009:steins-method-wiener}) have introduced the very powerful combination of Stein's method and Malliavin calculus, which yields quantitative central limit theorems for a very wide class of square integrable real-valued transformations of arbitrary Gaussian processes. Since its inception, this approach, which is now known as the \emph{Malliavin-Stein method}, has had a very substantial impact with numerous generalizations and applications. We refer to the  monograph~\cite{nourdin-peccati:2012:normal-approximations-malliavin} for an overview.

In this paper, we lift the theory to infinite-dimension, thus obtaining quantitative central limit theorems for square-integrable and Hilbert-valued random variables. The setting we will be working in is that of a diffusive Markov generator $L$, acting on $L^2(\Omega;K)$, where $K$ is a real separable Hilbert space. Our main result (see Section~\ref{sec:appr-hilb-valu} for unexplained definitions and Theorem~\ref{thm:1} for a precise statement) then states that for random variables $F$ in the domain of the associated carré du champ operator $\Gamma$ and centered, non-degenerate Gaussians $Z$ on $K$ with covariance operator $S$, one has
\begin{equation}
 \label{eq:56}
 d(F,Z) \leq \frac{1}{2} \sqrt{
\E \left(
   \left\lVert \Gamma(F,-L^{-1}F) - S \right\rVert_{\on{HS}}^2
 \right)}.
\end{equation}
Here, $\left\lVert \cdot \right\rVert_{\on{HS}}$ denotes the Hilbert-Schmidt norm, $L^{-1}$ the pseudo-inverse of the generator $L$ and $d$ is a probability metric generating a topology which is stronger than convergence in distribution.

Some examples of random variables $F$ fitting our framework are
homogeneous sums of i.i.d.\ Gaussians with Hilbert-valued coefficients
(or more generally a polynomial chaos with distributions coming from a diffusion generator), stochastic integrals of the form $F_t = \int_0^{\infty} u_{t,s} \d{B_s}$, where $B$ is Brownian motion and the kernel $u$ is such that the trajectories of $F$ are Hölder-continuous of order less than one half, or multiple Wiener-Itô integrals.

Proceeding from the general bound~\eqref{eq:56}, we generalize and refine the two most important results of the finite-dimensional Malliavin-Stein framework:

The first results are quantifications of so-called~\emph{Fourth Moment Theorems} (first discovered in~\cite{nualart-peccati:2005:central-limit-theorems} and substantially generalized in~\cite{ledoux:2012:chaos-markov-operator%
  ,azmoodeh-campese-poly:2014:fourth-moment-theorems%
  ,bourguin-campese-leonenko-ea:2019:four-moments-theorems}), which
state that for a sequence of eigenfunctions of the carré du champ
operator satisfying a chaotic property, convergence in distribution to
a Gaussian is equivalent to convergence of the second and fourth
moment. We prove that such quantitative Fourth Moment Theorems
continue to hold in infinite-dimension, i.e., that if $F$ is a chaotic
eigenfunction of the carré du champ operator and $Z$ is a Gaussian having the same covariance operator as $F$, then one has (see Section~\ref{sec:fourth-moment-bounds} for precise statements)
\begin{equation}
 \label{eq:48}
 d(F,Z) \leq \frac{1}{2}
 \sqrt{
   \E \left( \left\lVert F \right\rVert^4 \right)
   \left(
     \E \left( \left\lVert F \right\rVert^4 \right)
     -
     \E \left( \left\lVert Z \right\rVert^4 \right)
   \right)
 }.
\end{equation}
The fact that the moment difference on the right-hand side is non-negative will follow from our analysis. In fact, we prove a more general version of~\eqref{eq:48} for $K$-valued random variables $F$ which have a possibly infinite chaos expansion and whose covariance operator not necessarily coincides with the one of $Z$. Even in finite dimensions, such bounds are new.

The second type of results we obtain are so called \emph{contraction bounds} in the special case
where $L$ is the Ornstein-Uhlenbeck generator. Here, the chaotic
eigenfunctions of the generator become multiple Hilbert-valued
Wiener-It\^o integrals, so that, using a Hilbert-valued version of Malliavin calculus, their moments can be expressed in terms of contraction norms of their kernels (see~\eqref{eq:55} for a definition). Such contractions are important in applications due to their relatively straightforward computability. We provide bounds on $d(F,Z)$ ($F$ and $Z$ as in~\eqref{eq:56}) in terms of such contraction norms, again for random variables with possibly infinite chaos expansions. As an application, we explain how these bounds can be used to obtain rates of convergence in the functional Breuer-Major Theorem.

From a theoretical point of view, our results described above contain and extend, in a unified way, all previously established fourth moment theorems as well as carré du champ, fourth moment and contraction bounds for Gaussian approximation in a diffusive, finite-dimensional context (see
\cite{%
ledoux:2012:chaos-markov-operator%
,azmoodeh-campese-poly:2014:fourth-moment-theorems%
,campese-nourdin-peccati-ea:2016:multivariate-gaussian-approximations%
,bourguin-campese-leonenko-ea:2019:four-moments-theorems%
,nourdin-peccati:2009:steins-method-wiener%
,nourdin-peccati-reveillac:2010:multivariate-normal-approximation%
,noreddine-nourdin:2011:gaussian-approximation-vector-valued%
,nualart-ortiz-latorre:2008:central-limit-theorems%
,nualart-peccati:2005:central-limit-theorems%
}).

Furthermore, in the context of weak convergence of stochastic processes, our approach is an alternative to the usual method of proving finite-dimensional distribution convergence and tightness, with the advantage of yielding rates of convergence.

The existing literature on quantitative limit theorems in a non-linear and infinite-dimensional context is rather scarce. Barbour extended Stein's method to a functional setting in~\cite{barbour:1990:steins-method-diffusion} for diffusion approximations by a Brownian motion. This has recently been applied and extended by Kasprzak in~\cite{%
kasprzak:2017:steins-method-multivariate%
,kasprzak:2017:diffusion-approximations-via%
,kasprzak:2017:multivariate-functional-approximations%
}. Coutin and Decreusefond (\cite{coutin-decreusefond:2013:steins-method-brownian}) combined Stein's
method with integration by parts techniques in a separable Hilbert space
setting. While the general theme of this latter reference is similar to ours, the results are very different: the bounds in~\cite{coutin-decreusefond:2013:steins-method-brownian} are stated in terms of partial traces and require explicit evaluations of
isometries as all calculations are done in $\ell^2(\N)$; furthermore, no carr\'e du champ, moment, or contraction bounds are provided.

The rest of the paper is organized as follows. In Section~\ref{sec:preliminaries}, after recalling basic notions of probability theory on Hilbert spaces (Subsection~\ref{sec:prob-hilb-spac}), we provide an outline of Stein's method on abstract Wiener spaces (Subsection~\ref{sec:gauss-meas-steins}) and introduce the Dirichlet structure framework we will be working in (Subsection~\ref{sec:dirichl-struct-hilb}). The main results are contained in Section~\ref{sec:appr-hilb-valu}: we start by proving the aforementioned general carr\'e du champ bound~\eqref{eq:56} in Section~\ref{sec:an-abstract-carre} and then provide quantitative fourth moment theorems in Section~\ref{sec:fourth-moment-bounds}.
Section~\ref{sec:hilb-valu-wien} is devoted to the special case of the
Ornstein-Uhlenbeck generator. First, we present the associated
Malliavin calculus in a Hilbert space setting (Subsection~\ref{sec:malliavin-calculus}), which then leads to quantitative refined Fourth Moment and contraction bounds (Subsection~\ref{sec:fourth-moment-contr}). Finally, in Section~\ref{sec:quant-funct-breu}, rates of convergence for the functional Breuer-Major theorem are established.

\section{Preliminaries}
\label{sec:preliminaries}

\subsection{Probability on Hilbert spaces}
\label{sec:prob-hilb-spac}

Let $K$ be a separable real Hilbert space, $\mathcal{B}(K)$ the family
of Borel sets of $K$ and $(\Omega,\mathcal{F},P)$ a complete probability space.
A $K$-valued random variable $X$ is a measurable map from
$\left( \Omega,\mathcal{F} \right)$ to $(K,\mathcal{B}(K))$.
Such random variables are characterized by the property that for any continuous linear functional $\varphi \in K^{\ast}$, the function $\varphi(X) \colon
\Omega \to \R$ is a (real-valued) random variable. As usual, the distribution or law of a random variable $X$ is defined to be the push-forward probability measure $P \circ X^{-1}$ on $(K,\mathcal{B}(K))$. The set of all $K$-valued random variables is a vector space over the field of real numbers. If the Lebesgue integral
$  \E \left( \left\lVert X \right\rVert_K \right)
  =
  \int_{\Omega}^{} \left\lVert X \right\rVert_K \d{P}$
exists and is finite, then the Bochner integral $  \int_{\Omega}^{} X \d{P}$
exists in $K$ and is called the expectation of $X$. Slightly abusing
notation, we denote this integral by $\E \left( X \right)$ as well. It
will always be clear from the context if $\E \left( \cdot \right)$
denotes Lebesgue or Bochner integration with respect to $P$.
For $p \geq 1$, we denote by $L^p(\Omega;K)$ the Banach space of all equivalence classes (under almost sure equality) of $K$-valued random variables $X$ with finite $p$-th moment, i.e., such that
\begin{equation*}
  \left\lVert X \right\rVert_{L^p(\Omega,K)} = \left( \E \left( \left\lVert X \right\rVert_K^p \right) \right)^{1/p} < \infty.
\end{equation*}
Note that for all $X \in L^p(\Omega;K)$, the Bochner integral $\E
\left( X \right) \in B$ exists. If $X \in L^2(\Omega;K)$, the
covariance operator $S \colon K \to K$ of $X$ is defined by
\begin{equation}
 \label{eq:32}
  Su = \E \left( \left\langle X,u \right\rangle X \right).
\end{equation}
It is a positive, self-adjoint trace-class operator and verifies the identity
\begin{equation}
  \label{eq:16}
\on{tr} S = \E \left( \left\lVert X \right\rVert^2 \right).
\end{equation}
We denote by
$\mathcal{S}_1(K)$ the Banach space of all trace class operators on
$K$ with norm $\left\lVert T
\right\rVert_{\mathcal{S}_1(K)} = \on{tr} \left| T \right|$, where $\left| T \right| = \sqrt{TT^{\ast}}$. The subspace of
Hilbert-Schmidt operators will be denoted by $\on{HS}(K)$, its
inner product and associated norm by $\left\langle \cdot,\cdot
\right\rangle_{\on{HS}(K)}$ and $\left\lVert  \cdot
\right\rVert_{\on{HS}(K)}$, respectively. Recall that
\begin{equation*}
\norm{\cdot}_{\on{op}} \leq \norm{\cdot}_{\on{HS}(K)} \leq \norm{\cdot}_{\mathcal{S}_1},
\end{equation*}
where $\norm{\cdot}_{\on{op}}$ denotes the operator norm.

When there is no ambiguity about what Hilbert space $K$ underlies
$\left\langle \cdot , \cdot \right\rangle_K$, $\norm{\cdot}_K$, $\mathcal{S}_1(K)$ or $\on{HS}(K)$, we will drop the $K$ dependency
and just write $\left\langle \cdot , \cdot \right\rangle$,
$\norm{\cdot}$, $\mathcal{S}_1$, $\on{HS}$, and so on.

\subsection{Gaussian measures and Stein's method on abstract Wiener
  spaces}
\label{sec:gauss-meas-steins}
 In this section, we introduce Gaussian measures, the associated
 abstract Wiener spaces and Stein's method. We present the theory in a Banach space setting as specializing to Hilbert spaces brings no significant advantages at this point. Standard references for Gaussian measures and abstract Wiener spaces are the books~\cite{%
bogachev:1998:gaussian-measures%
,kuo:1975:gaussian-measures-banach%
}, Stein's method on abstract Wiener space has been introduced by Shih in~\cite{shih:2011:steins-method-infinite-dimensional}.

\subsubsection{Abstract Wiener spaces}
Let $H$ be a real separable Hilbert space with inner product
 $\left\langle \cdot, \cdot \right\rangle_H$ and define a norm
 $\norm{\cdot}_{}$ on $H$ (not necessarily induced by another inner product) that is weaker than $\norm{\cdot}_H$. Denote by $B$ the Banach space obtained as the
 completion of $H$ with respect to the norm $\norm{\cdot}_{}$ (note
 that if the $\norm{\cdot}_{}$ norm happens to be induced by an inner
 product, then $B$ is actually a Hilbert space), and
 define $i$ to be the canonical embedding of $H$ into $B$. Then, the
 triple $(i,H,B)$ is called an abstract Wiener space. We identify
 $B^{*}$ as a dense subspace of $H^{*}$ under the adjoint operator
 $i^{*}$ of $i$, so that we have the continuous embeddings $B^{*}
 \subset H \subset B$, where, as usual, $H$ is
 identified with its dual. The abstract Wiener measure $p$ on $B$ is characterized as the Borel measure on $B$ satisfying
\begin{equation*}
\int_B^{}e^{\on{i}\left\langle x,\eta \right\rangle_{B,B^{*}}}p(dx) = e^{-\frac{\norm{\eta}_{H}^2}{2}},
\end{equation*}
for any $\eta \in B^{*}$, where $\left\langle \cdot , \cdot \right\rangle_{B,B^{*}}$ denotes the dual pairing in $B$.

\subsubsection{Gaussian measures on Banach and Hilbert spaces}
\label{sec:gauss-meas-banach}
For a Banach space $B$, we denote by $\mathcal{B}(B)$ its family of Borel sets.
\begin{definition}
  \label{def:1}
  Let $B$ be a real separable Banach space. A Gaussian measure $\nu$
  is a  probability measure on $(B,\mathcal{B}(B))$, such that every
  linear functional $x \in B^{\ast}$, considered as a random variable on $(B,\mathcal{B}(B),\nu)$, has a Gaussian distribution (on $(\R,\mathcal{B}(\R))$). The Gaussian measure $\nu$ is called
\emph{centered} (or \emph{non-degenerate}), if these properties hold for the distributions of every $x \in B^{\ast}$.
\end{definition}
We see from the definition that every abstract Wiener measure is a Gaussian measure and, conversely, for any Gaussian measure $\nu$ on a separable Banach space $B$, there exists a Hilbert space $H$ such that the triple $(i,H,B)$ is an abstract Wiener space with associated abstract Wiener measure $\nu$ (see~\cite[Lemma 2.1]{kuelbs:1970:gaussian-measures-banach}). The space $H$ is called the Cameron-Martin space.

\subsubsection{Stein characterization of abstract Wiener measures}
Let $B$ be real separable Banach space with norm $\norm{\cdot}_{}$ and
let $Z$ be a $B$-valued random variable on some probability space
$(\Omega,\mathcal{F}, P)$ such that the distribution $\mu_Z$
of $Z$ is a non-degenerate Gaussian measure on $B$ with zero mean. Let
$(i,H,B)$ be the abstract Wiener space associated to the Wiener
measure $\mu_Z$, as described in the previous subsection. Let $\left\{
  P_t \colon t \geq 0 \right\}$ denote the Ornstein-Uhlenbeck semigroup associated with $\mu_Z$ and
defined, for any $\mathcal{B}(B)$-measurable
function $f$ and $x \in B$, by
\begin{equation*}
P_tf(x) = \int_B^{}f \left( e^{-t}x + \sqrt{1-e^{-2t}}y
\right)\mu_Z(dy),\quad t \geq 0,
\end{equation*}
provided such an integral exists. We have the following Stein lemma
for abstract Wiener measures (see \cite[Theorem 3.1]{shih:2011:steins-method-infinite-dimensional}).
\begin{theorem}
  \label{steinlemmabanachshi}
  Let $X$ be a $B$-valued random variable with distribution $\mu_X$.
  \begin{enumerate}
  \item[(i)] If $B$ is finite-dimensional, then $\mu_X = \mu_Z$ if and
    only if
\begin{equation}
\label{eq:3}
\Ex{\left\langle X,\nabla f(X) \right\rangle_{B,B^{*}} - \Delta_G f(X)} = 0
\end{equation}
for any twice differentiable function $f$ on $B$ such that
$\Ex{\norm{\nabla^2f(Z)}_{\mathcal{S}_1(H)}}<\infty$.
\item[(ii)] If $B$ is infinite-dimensional, then $\mu_X = \mu_Z$ if
  and only if \eqref{eq:3} holds for any twice $H$-differentiable
  function $f$ on $B$ such that $\nabla f(x) \in B^{*}$ for
  any $x \in B$, $\Ex{\norm{\nabla^2f(Z)}_{\mathcal{S}_1(H)}}<\infty$ and
  $\Ex{\norm{\nabla f(Z)}_{B^{*}}^2} < \infty$.
  \end{enumerate}
\end{theorem}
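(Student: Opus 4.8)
The plan is to identify the functional appearing in~\eqref{eq:3} with (the negative of) the infinitesimal generator $L$ of the Ornstein-Uhlenbeck semigroup $\{P_t\}$, and then to run the standard two-sided argument of Stein's method: invariance of the Gaussian measure for the ``only if'' direction, and the Stein equation for the ``if'' direction. First I would compute $L f(x) = \frac{d}{dt}\big|_{t=0} P_t f(x)$ by differentiating the Mehler formula under the integral sign. Differentiating the factor $\sqrt{1 - e^{-2t}}$ produces a term that is singular as $t \to 0$; the heart of the computation is that a Gaussian integration by parts in the $y$-variable (the Cameron-Martin formula for $\mu_Z$) pairs this singular factor against a compensating $\sqrt{1-e^{-2t}}$, leaving a finite limit and converting the dual pairing into the Gross Laplacian $\Delta_G f(x) = \operatorname{tr}_H \nabla^2 f(x)$. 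The outcome is the identity
\begin{equation*}
  L f(x) = \Delta_G f(x) - \langle x, \nabla f(x)\rangle_{B,B^*},
\end{equation*}
so that~\eqref{eq:3} is precisely the assertion $\Ex{L f(X)} = 0$. The hypotheses on $f$ are exactly what legitimizes this: $\Ex{\norm{\nabla^2 f(Z)}_{\mathcal{S}_1(H)}} < \infty$ makes $\Delta_G f(Z)$ a well-defined trace over $H$, while in infinite dimensions $\nabla f(x) \in B^*$ and $\Ex{\norm{\nabla f(Z)}_{B^*}^2} < \infty$ are what render the pairing $\langle x, \nabla f(x)\rangle_{B,B^*}$ measurable and integrable.

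\textbf{Necessity.} Suppose $\mu_X = \mu_Z$. The Gaussian measure $\mu_Z$ is invariant under the Ornstein-Uhlenbeck semigroup: by Mehler's formula and the stability of centered Gaussians under the map $(x,y) \mapsto e^{-t}x + \sqrt{1-e^{-2t}}y$, one has $\int_B P_t f \, d\mu_Z = \int_B f \, d\mu_Z$ for every $t \geq 0$. Differentiating this constant-in-$t$ identity at $t = 0$ and invoking the generator computation above yields $\Ex{L f(Z)} = 0$, which is~\eqref{eq:3} with $X$ replaced by $Z$, hence by $X$. The integrability conditions on $f$ justify the interchange of differentiation and integration.

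\textbf{Sufficiency.} Conversely, assume~\eqref{eq:3} holds for every admissible $f$. Fix a test function $h$ ranging over a class rich enough to determine $\mu_Z$ (bounded Lipschitz functions suffice, $\mu_Z$ being a Borel probability measure on the separable space $B$), and solve the associated Stein equation $L f = h - \Ex{h(Z)}$ by the semigroup formula
\begin{equation*}
  f(x) = - \int_0^{\infty} \left( P_t h(x) - \Ex{h(Z)} \right) \d{t},
\end{equation*}
whose convergence rests on the exponential ergodicity of $P_t$ towards $\mu_Z$. Applying~\eqref{eq:3} to this $f$ gives $\Ex{L f(X)} = 0$, that is $\Ex{h(X)} = \Ex{h(Z)}$; as $h$ runs over a measure-determining class, we conclude $\mu_X = \mu_Z$.

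\textbf{Main obstacle.} The crux, and the reason the infinite-dimensional case~(ii) carries hypotheses absent from~(i), is verifying that the Stein solution $f$ above actually lies in the admissible class: that it is twice $H$-differentiable with $\nabla f(x) \in B^*$, and that $\Ex{\norm{\nabla^2 f(Z)}_{\mathcal{S}_1(H)}} < \infty$ and $\Ex{\norm{\nabla f(Z)}_{B^*}^2} < \infty$. This demands quantitative smoothing estimates for the Ornstein-Uhlenbeck semigroup on the abstract Wiener space, controlling the $H$-derivatives of $P_t h$ uniformly enough in $t$ to survive integration over $(0,\infty)$, together with a careful justification of differentiation under the integral in infinite dimensions. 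In finite dimensions these points are automatic, since every linear functional is bounded (so $\nabla f(x) \in B^*$ for free) and all the relevant norms are equivalent, which is why~(i) can dispense with the $B^*$-conditions.
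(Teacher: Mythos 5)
This theorem is not proved in the paper at all: it is imported verbatim from Shih, \cite[Theorem 3.1]{shih:2011:steins-method-infinite-dimensional}, so there is no internal proof to compare against. Your sketch does reproduce the standard architecture of Shih's argument (identify the Stein operator with the Ornstein--Uhlenbeck generator, use invariance of $\mu_Z$ for necessity, use the Stein equation solved by the semigroup formula for sufficiency), so the strategy is the right one.

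As a proof, however, the two steps that carry all the analytic weight are only named, not carried out. First, the generator identity $Lf(x)=\Delta_G f(x)-\langle x,\nabla f(x)\rangle_{B,B^*}$ on an abstract Wiener space: you correctly point to the cancellation of the singular $\frac{d}{dt}\sqrt{1-e^{-2t}}$ term against a Gaussian integration by parts in $y$, but this is exactly where the hypotheses $\nabla f(x)\in B^*$ and trace-class second derivative enter (the pairing $\langle x,\cdot\rangle$ only makes sense against $B^*$, and the limit defining the trace requires $\nabla^2 f(x)\in\mathcal{S}_1(H)$, via Goodman's theorem or by assumption); without writing this out the ``if and only if'' has no content. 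Second, and more seriously, your own ``main obstacle'' paragraph concedes the crux of the sufficiency direction: showing that the Stein solution $f_h(x)=-\int_0^\infty(P_t h(x)-\Ex{h(Z)})\,dt$ for $h$ bounded Lipschitz is admissible, i.e.\ twice $H$-differentiable with $\nabla f_h(x)\in B^*$, $\Ex{\norm{\nabla^2 f_h(Z)}_{\mathcal{S}_1(H)}}<\infty$ and $\Ex{\norm{\nabla f_h(Z)}_{B^*}^2}<\infty$. These smoothing estimates for $P_t$ on $B$ (gaining two $H$-derivatives from a merely Lipschitz $h$, with integrable-in-$t$ bounds) are the substance of Shih's Sections 3--4 and cannot be waved through; note that the paper's own Lemma~\ref{lem:4} only handles $h\in C^k_b(K)$, where the derivatives of $h$ already exist, which is a strictly easier situation. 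Deferring both points leaves the proof incomplete precisely where the infinite-dimensional difficulty lives.
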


The notion  of an $H$-derivative appearing in Theorem \ref{steinlemmabanachshi} was
introduced by Gross in~\cite{gross:1967:potential-theory-hilbert} and is defined as
follows. A function $f \colon U \to W$ from an open set $U$ of
$B$ into a Banach space $W$ is said to be $H$-differentiable at a
point $x \in U$ if the map $\phi(h) = f(x+h)$, $h \in H$, regarded as
a function defined in a neighborhood of the origin of $H$ is Fr\'echet
differentiable at 0. The Fr\'echet derivative $\phi'(0)$ at $0 \in H$
is called the $H$-derivative of $f$ at $x \in B$. The $H$-derivative
of $f$ at $x$ in the direction $h \in H$ is denoted by $\left\langle
  \nabla f(x),h \right\rangle_H$. The $k$-th order $H$-derivatives of
$f$ at $x$ can be defined inductively and are denoted by $\nabla^k
f(x)$ for $k \geq 2$, provided they exist. If $f$ is scalar-valued,
$\nabla f(x) \in H^{*} \approx H$ and $\nabla^2 f(x)$ is regarded as a
bounded linear operator from $H$ into $H^{*}$ for any $x \in U$, and
the notation $\left\langle \nabla^2 f(x) h, k \right\rangle_H$ stands
for the action of the linear form $\nabla^2 f(x)(h, \cdot)$, $h \in H$, on $k \in
H$, denoted by $\nabla^2 f(x)(h, k)$. Furthermore, if $\nabla^2 f(x)$ is
a trace-class operator on $H$, we can define the so-called Gross
Laplacian $\Delta_G f(x)$ of $f$ at $x$ appearing in \eqref{eq:3} by $\Delta_G
f(x) = \on{tr}_H(\nabla^2f(x))$.
\begin{remark}[On the relation between Fr\'echet and $H$-derivatives]
An $H$-derivative $\nabla f(x)$ at $x \in B$ determines an element in
$B^{*}$ if there is a constant $C >0$ such that $\abs{\left\langle
    \nabla f(x),h \right\rangle_H} \leq C \norm{h}_H$ for any $h \in
H$. Then, $\nabla f(x)$ defines an element of $B^{*}$ by continuity
and we denote this element by $\nabla f(x)$ as well. Now, if $f$ is
also twice Fr\'echet differentiable on $B$, then $\nabla f(x)$
coincides with the first-order Fr\'echet derivative $f'(x)$ at $x \in
B$ and is automatically in $B^{*}$. Furthermore, $\nabla^2 f(x)$
coincides with the restriction of the second-order Fr\'echet
derivative $f''(x)$ to $H \times H$ at $x \in B$. In this framework,
since for any $x \in B$, $f''(x)$ is a bounded linear operator from
$B$ into $B^{*}$, Goodman's theorem (see~\cite[Chapter 1, Theorem 4.6]{kuo:1975:gaussian-measures-banach}) implies that $\nabla^2
f(x)$ is a trace-class operator on $H$ and that, consequently, the Gross Laplacian
$\Delta_G f(x)$ is well-defined. Twice Fr\'echet differentiability
hence constitutes a sufficient condition for the existence of the Gross Laplacian.   
\end{remark}

\subsubsection{Stein's equation and its solutions for abstract Wiener
  measures}
\label{sec:steins-equation-its}
In view of the above Stein lemma (Theorem \ref{steinlemmabanachshi}), the associated
Stein equation is given by
\begin{equation}
  \label{eq:4}
\Delta_Gf(x) - \left\langle x,\nabla f(x) \right\rangle_{B,B^{*}} = h(x) -
\Ex{h(Z)}, \qquad x \in B,
\end{equation}
where $h$ is given in some class of test functionals. Shih showed in~~\cite{shih:2011:steins-method-infinite-dimensional} that
\begin{equation}
  \label{steinsolus}
f_h(x) = -\int_0^{\infty} \left( P_uh(x) - \Ex{h(Z)}
\right)du,\qquad x\in B
\end{equation}
solves the Stein equation \eqref{eq:4} whenever $h$ is an element of
$\on{ULip-1}(B)$, the Banach space of scalar-valued
uniformly 1-Lipschitz functions $h$ on $B$ with the norm $\norm{h}_{} =
\norm{h}_{\on{ULip}} + \abs{h(0)}$, where
\begin{equation*}
\norm{h}_{\on{ULip}} = \sup_{x \neq y \in B} \frac{\abs{h(x)-h(y)}}{\norm{x-y}_{}}<\infty.
\end{equation*}

In what follows, we will consider test functions from the space
$C^k_b(K)$ of real-valued, $k$-times Fr\'echet differentiable
functions on a separable Hilbert space $K$ with bounded derivatives up to order $k$. A function $h$
thus belongs to $C^k_b(K)$ whenever
\begin{equation*}
\norm{h}_{C^k_b(K)} = \sup_{j=1, \ldots, k} \sup_{x \in K} \norm{\nabla^j
  h(x)}_{K^{\otimes j}} < \infty.
\end{equation*}
The following lemma collects some properties of the Stein solution $f_h$ for a given function $h \in C^k_b(K)$.
\begin{lemma}
  \label{lem:4}
  Let $K$ be a separable Hilbert space, $k \geq 1$ and $h \in C^k_b(K)$. Then the Stein solution $f_h$ defined in \eqref{steinsolus} also belongs to $C^k_b(K)$ and furthermore one has that
\begin{equation}
  \label{eq:8}
  \sup_{u \in K}
  \norm{\nabla^{j} f_h(u)}_{K^{\otimes j}}
  \leq
  \frac{1}{j}
  \left\lVert h \right\rVert_{C^j_{b}(K)}, \qquad j \in \N, \qquad j \leq k.
\end{equation}

\end{lemma}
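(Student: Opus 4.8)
The plan is to exploit the Mehler-type representation of the Ornstein--Uhlenbeck semigroup, namely
\[
P_s h(x) = \int_K h\!\left( e^{-s} x + \sqrt{1 - e^{-2s}}\, y \right) \mu_Z(dy),
\]
together with the observation that the constant $\Ex{h(Z)}$ vanishes under spatial differentiation, so that $\nabla^j f_h(x) = - \int_0^{\infty} \nabla^j_x P_s h(x) \d s$ once the interchange of $\nabla^j$ and the $ds$-integral is justified. As a preliminary, I would record that the integral defining $f_h$ converges: since any $h \in C^k_b(K)$ with $k \geq 1$ is globally Lipschitz with constant $\norm{h}_{C^1_b(K)}$, coupling through a common $Z \sim \mu_Z$ gives
\[
\abs{P_s h(x) - \Ex{h(Z)}} \leq \norm{h}_{C^1_b(K)} \left( e^{-s} \norm{x} + \bigl(1 - \sqrt{1-e^{-2s}}\,\bigr) \Ex{\norm{Z}} \right),
\]
which decays like $e^{-s}$ as $s \to \infty$ and is bounded near $s = 0$, so $f_h(x)$ is well defined for every $x \in K$.

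The core of the argument is the differentiation of $P_s h$. Since $x \mapsto e^{-s} x + \sqrt{1-e^{-2s}}\,y$ is affine with linear part $e^{-s} \Id_K$, repeated application of the chain rule yields, pointwise in $y$,
\[
\nabla^j_x \Bigl[ h\bigl(e^{-s} x + \sqrt{1-e^{-2s}}\, y\bigr) \Bigr] = e^{-js}\, (\nabla^j h)\bigl(e^{-s} x + \sqrt{1-e^{-2s}}\, y\bigr) \in K^{\otimes j},
\]
each differentiation contributing exactly one factor $e^{-s}$ from the Jacobian $e^{-s}\Id_K$. Integrating over $y$ and using that $\mu_Z$ is a probability measure together with the triangle inequality for the $K^{\otimes j}$-valued Bochner integral, one obtains $\nabla^j_x P_s h = e^{-js}\, P_s(\nabla^j h)$ and the uniform estimate
\[
\norm{\nabla^j_x P_s h(x)}_{K^{\otimes j}} \leq e^{-js} \sup_{z \in K} \norm{\nabla^j h(z)}_{K^{\otimes j}} \leq e^{-js}\, \norm{h}_{C^j_b(K)},
\]
valid for all $x \in K$ and all $j \leq k$.

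Finally, because the bound $e^{-js}\,\norm{h}_{C^j_b(K)}$ is integrable in $s$ over $[0,\infty)$, a dominated-convergence argument permits differentiating under the $ds$-integral, giving both the existence of $\nabla^j f_h$ and
\[
\sup_{x \in K} \norm{\nabla^j f_h(x)}_{K^{\otimes j}} \leq \int_0^{\infty} e^{-js}\, \norm{h}_{C^j_b(K)} \d s = \frac{1}{j}\, \norm{h}_{C^j_b(K)},
\]
which is exactly \eqref{eq:8} and simultaneously shows $f_h \in C^k_b(K)$. The main obstacle is not the arithmetic but the rigorous justification of the two interchanges of limit and integral: differentiation under the Gaussian integral in the Mehler formula (where for $j < k$ one dominates the difference quotients of $\nabla^{j-1} h$ by the bounded continuous $\nabla^j h$, and at the top order $j = k$ invokes continuity and boundedness of $\nabla^k h$), and differentiation under the $ds$-integral; both rest on verifying dominated-convergence hypotheses for Fréchet differentiability in the Hilbert-space setting, and on confirming that the chain rule produces precisely the factor $e^{-js}$ at each order.
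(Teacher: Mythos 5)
Your proposal is correct and follows essentially the same route as the paper: differentiate under the $\mathrm{d}s$-integral, use the commutation relation $\nabla^j P_s h = e^{-js} P_s \nabla^j h$ (which the paper simply quotes as a semigroup property and you derive from the Mehler formula), bound $P_s \nabla^j h$ by contractivity, and integrate $e^{-js}$ to obtain the factor $1/j$. Your added care about the convergence of the defining integral and the justification of the two interchanges only makes explicit what the paper leaves implicit.
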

\begin{proof}
As for any $x \in K$, $f_h(x) = -\int_0^{\infty} \left( P_uh(x) - \Ex{h(Z)}
\right)du$, we have, for any $j = 1,\ldots , k$,
\begin{equation*}
\nabla^j f_h(x) = -\int_0^{\infty} \nabla^j P_uh(x)du,
\end{equation*}
so that
\begin{align*}
\norm{f_h}_{C^k_b(K)} &= \sup_{j=1, \ldots, k} \sup_{x \in K}
                        \norm{-\int_0^{\infty} \nabla^j P_uh(x)du }_{K^{\otimes j}} \\
  & \leq \sup_{j=1, \ldots, k} \sup_{x \in K}
                        \int_0^{\infty} \norm{\nabla^j
    P_uh(x)}_{K^{\otimes j}} du.
\end{align*}
Using the property of the semigroup $P$ that $\nabla^j
P_u h(x) = e^{-j u}P_u \nabla^j h(x)$, and the fact
that $P$ is contractive yields
\begin{align*}
\norm{f_h}_{C^k_b(K)} & \leq \sup_{j=1, \ldots,
                        k} \sup_{x \in K}  \int_0^{\infty} e^{-ju} \norm{P_u \nabla^j
                        h(x)}_{K^{\otimes j}} du \\
  & \leq \sup_{j=1, \ldots,
                        k} \sup_{x \in K} \int_0^{\infty} e^{-ju} \norm{\nabla^j
    h(x)}_{K^{\otimes j}} du \\
  &= \sup_{j=1, \ldots,
                        k} \sup_{x \in K} \frac{1}{j} \norm{\nabla^j
    h(x)}_{K^{\otimes j}} \\
  & \leq \norm{h}_{C^k_b(K)} < \infty,
\end{align*}
proving that $f_{h} \in C^k_b(K)$. The bound~\eqref{eq:8} can be derived similarly.
\end{proof}

\subsection{Dirichlet structures}
\label{sec:dirichl-struct-hilb}
In this section, a Dirichlet structure for Hilbert-valued random
variables is introduced, which will be the framework we work in.
We start by recalling the well-known definition in the case of real-valued random variables (full details can for example be found in~\cite{bouleau-hirsch:1991:dirichlet-forms-analysis%
  ,fukushima-oshima-takeda:2011:dirichlet-forms-symmetric%
  ,ma-rockner:1992:introduction-theory-nonsymmetric%
  ,bakry-gentil-ledoux:2014:analysis-geometry-markov}, where the
latter reference emphasizes the equivalent notion of a Markov triple).
Given a probability space $(\Omega,\mathcal{F},P)$, a Dirichlet structure $(\mathbb{D},\mathcal{E})$ on $L^2(\Omega;\mathbb{R})$ with associated carré du champ operator $\Gamma$ consists of a Dirichlet domain $\mathbb{D}$, which is a dense subset of $L^2(\Omega;\mathbb{R})$ and a carré du champ operator $\Gamma \colon \mathbb{D} \times \mathbb{D} \to L^1(\Omega;\mathbb{R})$ characterized by the following properties.

\begin{itemize}
\item[-] $\Gamma$ is bilinear, symmetric ($\Gamma(f,g)=\Gamma(g,f)$) and positive $\Gamma(f,f) \geq0$,
\item[-] for all $m,n \in \N$, all Lipschitz and continuously differentiable functions $\varphi \colon \R^m \to \R$ and $\psi \colon \R^n \to \R$ and all $f=(f_1,\dots,f_m) \in \mathbb{D}^m$, $g=(g_1,\dots,g_n) \in \mathbb{D}^n$, it holds that
\begin{equation}
 \label{eq:15}
  \Gamma(\varphi(f),\psi(g)) = \sum_{i=1}^m \sum_{j=1}^{n} \partial_i \varphi(f) \partial_j \psi(g) \Gamma(f_i,g_j),
\end{equation}
\item[-] the induced positive linear form $f \mapsto \mathcal{E}(f,f)$, where
\begin{equation*}
  \mathcal{E}(f,g) = \frac{1}{2} \E \left( \Gamma(f,g) \right)
\end{equation*}
is closed in $L^2(\Omega;\mathbb{R})$, i.e., $\mathbb{D}$ is complete when equipped with the norm
\begin{equation*}
  \left\lVert \cdot \right\rVert_{\mathbb{D}}^2 = \left\lVert \cdot \right\rVert_{L^2(\Omega;\R)}^2 + \mathcal{E}(\cdot).
\end{equation*}
\end{itemize}
Here and in the following, $\E \left( \cdot \right)$ denotes
expectation on $(\Omega,\mathcal{F})$ with respect to $P$.
The form $f \to \mathcal{E}(f,f)$ is called a Dirichlet form, and, as is
customary, we will write $\mathcal{E}(f)$ for
$\mathcal{E}(f,f)$. Every Dirichlet form gives rise to a strongly
continuous semigroup $\left\{ P_t \colon t\geq 0 \right\}$ on $L^2(\Omega;\mathbb{R})$ and an associated symmetric Markov generator $-L$, defined on a dense subset $\on{dom}(-L) \subseteq \mathbb{D}$. We will often switch between $-L$ and $L$, as these two operators only differ by sign.
There are two important relations between $\Gamma$ and $L$. The first one is the integration by parts formula
\begin{equation}
 \label{eq:24}
  \E \left( \Gamma(f,g)  \right) = - \E \left( f Lg  \right) = - \E \left( g L f \right),
\end{equation}
valid whenever $f,g \in \mathbb{D}$, the second one is the relation
\begin{equation*}
  \Gamma(f,g) = \frac{1}{2} \left( L(fg) - gLf-fLg \right),
\end{equation*}
which holds for all $f,g\in\on{dom}(L)$ such that $fg \in \on{dom}(L)$.
If $-L$ is diagonalizable with spectrum $\left\{ 0=\lambda_0<\lambda_1 < \dots \right\}$, a pseudoinverse $-L^{-1}$ can be introduced via spectral calculus as follows: if $f = \sum_{i=0}^{\infty} f_i$ with $f_i \in \ker \left( L + \lambda_i \on{Id} \right)$, then
\begin{equation*}
  -L^{-1} f = \sum_{i=1}^{\infty} \frac{1}{\lambda_i} f_i.
\end{equation*}
It follows that
\begin{equation*}
  LL^{-1}f = f - \E \left( f \right).
\end{equation*}

Consider now such a Dirichlet structure on $L^2(\Omega;\mathbb{R})$ with diagonalizable generator as given and denote the Dirichlet domain, Dirichlet form, carré du champ operator, its associated infinitesimal generator and pseudo-inverse by $\widetilde{\mathbb{D}}$, $\widetilde{\mathcal{E}}$, $\widetilde{\Gamma}$, $\widetilde{L}$ and $\widetilde{L}^{-1}$, respectively, in order to distinguish these objects from their extensions to the Hilbert-valued setting to be introduced below.

Given a separable Hilbert space $K$, one has that $L^2(\Omega;K)$ is isomorphic to $L^2(\Omega;\R)\otimes K$. The Dirichlet structure on $L^2(\Omega;\R)$ can therefore be extended to $L^2(\Omega;K)$ via a tensorization procedure as follows.

Let $\left\{ 0=\lambda_0 < \lambda_1 < \dots \right\}$ be the spectrum of $-\widetilde{L}$ and $\mathcal{A}$ the set of all functions $F$ of the form
\begin{equation}
  \label{eq:34}
  F = \sum_{(i,j) \in I} f_{p_i} \otimes k_j,
\end{equation}
where $I \subseteq \N \times \N$ is finite, the $f_{p_i}$ are eigenfunctions of $-\widetilde{L}$ with eigenvalue $\lambda_{p_i} \geq 0$ and the $k_j$ form an orthonormal basis in $K$. Then $\mathcal{A}$ is dense in $L^2(\Omega;K)$.
For $F \in \mathcal{A}$ of the form~\eqref{eq:34} and analogously $G=\sum_{(i',j') \in I'}^{} f_{p_{i'}} \times k_{j'} \in \mathcal{A}$ , define linear operators $L$, $L^{-1}$ by
\begin{align*}
LF &= \sum_{(i,j) \in I} (\widetilde{L}f_{p_i}) \otimes k_j = -
      \sum_{(i,j) \in I} \lambda_{p_i} f_{p_i} \otimes k_j,
  \\
  L^{-1}(F) &=
              \sum_{(i,j) \in I}  (\widetilde{L}^{-1}f_{p_i}) \otimes k_j =
              -
              \sum_{\substack{(i,j) \in I\\p_i \neq 0}} \frac{1}{\lambda_{p_i}} f_{p_i} \otimes k_j,
\end{align*}
a bilinear and positive operator $\Gamma$ by
\begin{equation*}
\Gamma(F,G) = \frac{1}{2}
 \sum_{(i,j)\in I} \sum_{(i',j') \in I'}^{}
 \widetilde{\Gamma}(f_{p_i},f_{p_{i'}})
 \otimes
 \left(
   k_{j} \otimes k_{j'}
   +
   k_{j'} \otimes k_{j}
 \right)
\end{equation*}
and a bilinear, positive and symmetric form $\mathcal{E}$ by
\begin{equation*}
\mathcal{E}(F,G) = \E \left( \on{tr} \Gamma(F,G) \right),
\end{equation*}
where in the definition of $\mathcal{E}$ we identify $\Gamma(F,G) \in L^2(\Omega;\R)\otimes K \otimes K \simeq L^2(\Omega;\mathcal{L}(K,K))$ with a random operator on $K$, whose action is given by
\begin{equation*}
  \Gamma(F,G)u =
  \frac{1}{2}
  \sum_{(i,j) \in I} \sum_{(i',j') \in I'}^{}  \widetilde{\Gamma}(f_{p_i},f_{p_{i'}})
  \left(
    \left\langle k_j,u \right\rangle \otimes k_{j'}
    +
    \left\langle k_{j'},u \right\rangle \otimes k_{j}
    \right), \quad u \in K.
  \end{equation*}
For all $F,G \in \mathcal{A}$, the operator  $\Gamma(F,G)$ is then of trace class and an element of
$L^1(\Omega;\mathcal{S}_1)$. It is standard to verify that the
definitions of $L$, $L^{-1}$ and $\Gamma$ do not depend on the choice
of the basis of $K$. Furthermore, from the well-known results for $\widetilde{L}$, $\widetilde{\Gamma}$ and $\widetilde{\mathcal{E}}$, we can extend them as follows.

\begin{proposition}
  \label{prop:2}
  The operators $L$, $L^{-1}$, $\mathcal{E}$ and $\Gamma$ introduced above can be extended to $\on{dom} (L)$, $\on{dom}(L^{-1})$ and $\on{dom}(\Gamma)= \on{dom}(\mathcal{E}) = \mathbb{D} \times \mathbb{D}$, given by
  \begin{align*}
    \on{dom}(L)
    &=
    \left\{
    F \in L^2(\Omega;K)
    \colon
    \sum_{p=1}^{\infty} \lambda_p^2 \,  \widetilde{\pi}_p \left( \left\lVert F \right\rVert^2 \right)
    <
    \infty
      \right\}
    \\
    \on{dom}(L^{-1}) &= L^2(\Omega;K)
    \\
    \intertext{and}
    \mathbb{D} &=
    \left\{
    F \in L^2(\Omega;K)
    \colon
    \sum_{p=1}^{\infty} \lambda_p \,  \widetilde{\pi}_p \left( \left\lVert F \right\rVert^2 \right)
    <
    \infty
      \right\},
  \end{align*}
respectively, where $\widetilde{\pi}_p$ denotes the orthogonal
projection onto
\begin{equation*}
\ker (\widetilde{L} + \lambda_p \on{Id} ) \subseteq L^2(\Omega;\R).
\end{equation*}
In particular, one has
\begin{equation*}
 \mathcal{A} \subseteq  \on{dom}(L) \subseteq \mathbb{D} \subseteq \on{dom}(L^{-1}) = L^2(\Omega;K),
\end{equation*}
where all inclusions are dense.
\end{proposition}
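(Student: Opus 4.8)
The plan is to reduce the entire statement to the spectral (``chaos'') decomposition obtained by tensoring the scalar one with $K$. Since $-\widetilde{L}$ is diagonalizable, one has the orthogonal decomposition $L^2(\Omega;\R) = \bigoplus_{p \geq 0} \Ker{\widetilde{L} + \lambda_p \Id}$ with associated projections $\widetilde{\pi}_p$; tensoring with $K$ under the isomorphism $L^2(\Omega;K) \simeq L^2(\Omega;\R) \otimes K$ yields an orthogonal decomposition $L^2(\Omega;K) = \bigoplus_{p \geq 0} H_p$ into the eigenspaces $H_p = \Ker{\widetilde{L} + \lambda_p \Id} \otimes K$, with projections $\pi_p = \widetilde{\pi}_p \otimes \Id$. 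Every $F \in L^2(\Omega;K)$ then satisfies $F = \sum_p \pi_p F$; writing $c_p = \E(\norm{\pi_p F}^2)$ for the chaos masses, one has $\sum_p c_p = \E(\norm{F}^2) < \infty$, and the quantities in the statement are exactly $\widetilde{\pi}_p(\norm{F}^2) = c_p$. Unwinding the definitions on $\mathcal{A}$ shows that $L = \widetilde{L} \otimes \Id$ and $L^{-1} = \widetilde{L}^{-1} \otimes \Id$ act diagonally, namely $LF = -\sum_p \lambda_p \pi_p F$ and $L^{-1} F = -\sum_{p \geq 1} \lambda_p^{-1} \pi_p F$.

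Granting this, the extension of $L$ and $L^{-1}$ is a routine exercise in the spectral theory of diagonal operators. First I would observe that the multipliers $\lambda_p^{-1} \leq \lambda_1^{-1}$ are uniformly bounded, so that $\E(\norm{L^{-1}F}^2) = \sum_{p \geq 1} \lambda_p^{-2} c_p \leq \lambda_1^{-2}\, \E(\norm{F}^2)$; hence $L^{-1}$ is bounded and extends to all of $L^2(\Omega;K)$, giving $\on{dom}(L^{-1}) = L^2(\Omega;K)$. The operator $L$, being diagonal with real eigenvalues $-\lambda_p$, is (minus) a non-negative self-adjoint, hence closed, operator on its maximal domain $\{F : \sum_p \lambda_p^2 c_p < \infty\}$, which is precisely the stated $\on{dom}(L)$, and $\mathcal{A}$ is a core (truncating both the chaos expansion and the $K$-expansion produces graph-norm approximants lying in $\mathcal{A}$, since any element of an eigenspace is itself an eigenfunction). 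The chain of inclusions follows from comparing weights: the elementary bound $\lambda_p \leq 1 + \lambda_p^2$ gives $\sum_p \lambda_p c_p \leq \sum_p c_p + \sum_p \lambda_p^2 c_p$, whence $\on{dom}(L) \subseteq \mathbb{D} \subseteq L^2(\Omega;K)$, each dense by the same truncation.

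For the form, a direct computation on $\mathcal{A}$—either by taking the trace of the defining formula for $\Gamma$ and invoking $\widetilde{\mathcal{E}}(f,g) = -\tfrac{1}{2}\E(f \widetilde{L} g)$, or via the vector integration by parts $\E(\on{tr}\Gamma(F,G)) = -\E \langle F, LG \rangle$—yields $\mathcal{E}(F,F) = \sum_{p \geq 1} \lambda_p c_p$. Consequently $\norm{F}_{\mathbb{D}}^2 = \sum_p (1 + \lambda_p) c_p$ is a weighted $\ell^2$ norm, $\mathbb{D}$ is complete for it (being isometric to the corresponding weighted Hilbert direct sum of the $H_p$), and $\mathcal{E}$ is closed with form domain the stated $\mathbb{D}$. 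To extend $\Gamma$ itself as an $\mathcal{S}_1$-valued object, I would proceed by continuity from $\mathcal{A}$ into the Banach space $L^1(\Omega;\mathcal{S}_1)$, the decisive estimate being the operator Cauchy--Schwarz inequality
\[
  \E \left( \norm{\Gamma(F,G)}_{\mathcal{S}_1} \right) \leq \sqrt{\mathcal{E}(F,F)} \, \sqrt{\mathcal{E}(G,G)}.
\]

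I expect this last inequality to be the main obstacle, because $\Gamma(F,G)$ is not positive in general (only $\Gamma(F,F)$ is, so that $\norm{\Gamma(F,F)}_{\mathcal{S}_1} = \on{tr}\Gamma(F,F)$, and a crude polarization bound is not sharp enough to run a Cauchy argument). To obtain it, write $F = \sum_m a_m \otimes k_m$ and $G = \sum_m b_m \otimes k_m$, so that the matrix entries are $\langle \Gamma(F,G) k_m, k_n \rangle = \tfrac{1}{2}(\widetilde{\Gamma}(a_m, b_n) + \widetilde{\Gamma}(a_n, b_m))$ and hence $\Gamma(F,G) = \tfrac{1}{2}(N + N^{\ast})$ with $N_{mn} = \widetilde{\Gamma}(a_m, b_n)$. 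Using the pointwise positivity of the scalar carré du champ, I would realize $\widetilde{\Gamma}(\cdot,\cdot)(\omega)$ as a Gram form, factor $N = \Psi^{\ast} \Phi$ with $\norm{\Phi}_{\on{HS}}^2 = \on{tr}\Gamma(F,F)$ and $\norm{\Psi}_{\on{HS}}^2 = \on{tr}\Gamma(G,G)$, and apply the Schatten--Hölder inequality $\norm{N}_{\mathcal{S}_1} \leq \norm{\Phi}_{\on{HS}} \norm{\Psi}_{\on{HS}}$ pointwise, followed by Cauchy--Schwarz in $\omega$. Completeness of $L^1(\Omega;\mathcal{S}_1)$ together with density of $\mathcal{A}$ in $\mathbb{D}$ then delivers well-defined, representation-independent extensions for which $\Gamma(F,G)$ remains trace class in the limit. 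The remaining verifications—independence of the decomposition from the chosen basis of $K$, density of each inclusion, and consistency of the tensorized operators with $\widetilde{L}$ and $\widetilde{\Gamma}$—are routine and follow from the truncation arguments already invoked.
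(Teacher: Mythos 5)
Your argument is correct, and it follows exactly the route the paper intends: the authors state Proposition~\ref{prop:2} without proof, asserting only that it follows ``from the well-known results for $\widetilde{L}$, $\widetilde{\Gamma}$ and $\widetilde{\mathcal{E}}$'' via the tensorization $L^2(\Omega;K)\simeq L^2(\Omega;\R)\otimes K$, which is precisely your diagonal/spectral reduction with projections $\pi_p=\widetilde{\pi}_p\otimes\Id$. The one piece of genuine content you supply beyond the routine spectral bookkeeping is the pointwise Gram factorization of $\widetilde{\Gamma}$ yielding $\E\bigl(\lVert\Gamma(F,G)\rVert_{\mathcal{S}_1}\bigr)\leq\sqrt{\mathcal{E}(F,F)}\sqrt{\mathcal{E}(G,G)}$, which is indeed the right estimate for extending $\Gamma$ by continuity into $L^1(\Omega;\mathcal{S}_1)$ and is correct as sketched; your reading of $\widetilde{\pi}_p\bigl(\lVert F\rVert^2\bigr)$ as the chaos mass $\E\bigl(\lVert\pi_p F\rVert^2\bigr)$ is also the only one that makes the stated domains parse.
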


Throughout this article, the extensions of $L$, $L^{-1}$ and $\Gamma$ to their maximal domains will still be denoted by the same symbols. The operators just defined yield a Dirichlet structure $(\Gamma,\mathbb{D})$ on $L^2(\Omega;K)$, which is a natural counterpart to the given structure $(\widetilde{\Gamma},\widetilde{\mathbb{D}})$ on $L^2(\Omega;\R)$. The following theorem summarizes its main features.

\begin{theorem}
  \label{thm:8}
  For a Dirichlet structure $(\mathbb{D},\Gamma)$ on $L^2(\Omega;K)$, consisting of a dense subspace $\mathbb{D}$ of $L^2(\Omega;K)$ and a carré du champ operator $\Gamma \colon \mathbb{D} \times \mathbb{D} \to L^{1}(\Omega;\mathcal{S}_1)$ as introduced above, the following is true.
\begin{enumerate}[(i)]
\item $\Gamma$ is bilinear, almost surely positive (i.e., $\Gamma(F,F) \geq 0$ as an operator on $K$), symmetric in its arguments and self-adjoint  ($\langle\Gamma(F,G)u,v\rangle=\langle u,\Gamma(F,G)v \rangle$ for all $u,v \in K$).
\item The Dirichlet domain $\mathbb{D}$, endowed with the norm
\begin{equation*}
  \left\lVert F \right\rVert_{\mathbb{D}}^2
  =
  \left\lVert F \right\rVert_{L^2(\Omega;K)}^2 + \left\lVert \Gamma(F,F) \right\rVert_{L^1(\Omega;\mathcal{S}_1)}
\end{equation*}
is complete, so that $\Gamma$ is closed.
\item
  For all Lipschitz and Fréchet differentiable operators $\varphi, \psi$ on $K$ and $F,G \in \mathbb{D}$, one has that $\varphi(F), \psi(G) \in \mathbb{D}$ and the  diffusion identity
\begin{equation}
 \label{eq:40}
 \Gamma(\varphi(F),\psi(G)) =
 \frac{1}{2}
 \big(
 \nabla \varphi(F)^{\ast}
 \Gamma(F,G)
 \nabla \psi(G)
 +
 \nabla \psi(G)^{\ast}
 \Gamma(F,G)
  \nabla \varphi(F)
 \big)
\end{equation}
holds,  where $\nabla \varphi(F)$ and $\nabla \psi(G)$ denote the Fréchet derivatives of $\varphi$ and $\psi$ at $F$ and $G$, respectively, and $\nabla \varphi(F)^{\ast}$, $\nabla \psi(G)^{\ast}$ are their adjoints in $K$.
\item The associated generator $-L$ acting on $L^2(\Omega;K)$ is positive, symmetric, densely defined and has the same spectrum as $-\widetilde{L}$.
\item There exists a compact pseudo-inverse $L^{-1}$ of $L$ such that
\begin{equation*}
  LL^{-1}F = F - \E \left( F \right)
\end{equation*}
for all $F \in L^2(\Omega;K)$, where the expectation on the right is a
Bochner integral (well defined as $F \in L^2(\Omega;K)$).
\item The integration by parts formula
  \begin{equation}
 \label{eq:49}
   \E \left( \on{tr} \Gamma(F,G) \right)
 =
 - \E \left( \left\langle LF,G \right\rangle \right)
 =
 -
 \E \left( \left\langle F,LG \right\rangle \right).
\end{equation}
is satisfied for all $F,G \in \on{dom}(-L)$.
\item The carré du champ $\Gamma$ and the generators $L$ and $\widetilde{L}$ are connected through the identity
\begin{equation*}
   \on{tr} \Gamma(F,G)
 =
 \frac{1}{2}
\left(
   \widetilde{L}
   \left\langle F,G \right\rangle
   -
   \left\langle LF,G \right\rangle
   -
   \left\langle F,LG \right\rangle
    \right),
  \end{equation*}
  valid for $F,G \in \on{dom}(L)$.
\item The fundamental identity
\begin{equation}
 \label{eq:42}
   \left\langle \Gamma(F,G)u,v \right\rangle
 =
 \frac{1}{2}
 \left(
   \widetilde{\Gamma} \left(
     \left\langle F,u \right\rangle
     ,
     \left\langle G,v \right\rangle
   \right)
   +
   \widetilde{\Gamma} \left(
     \left\langle G,u \right\rangle
     ,
     \left\langle F,v \right\rangle
   \right)
 \right),
\end{equation}
connecting $\Gamma$ and its one-dimensional counterpart $\widetilde{\Gamma}$ is valid for all $F,G\in \mathbb{D}$ and all $u,v \in K$.
\end{enumerate}
\end{theorem}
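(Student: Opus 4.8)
\emph{Overall strategy.} The guiding principle is that every object in the statement has been built by tensorizing the one-dimensional structure $(\widetilde{\mathbb{D}}, \widetilde{\Gamma})$ against $K$, so each assertion should reduce, on the dense algebra $\mathcal{A}$ of~\eqref{eq:34}, to the corresponding one-dimensional fact, after which one extends by density using the domain description of Proposition~\ref{prop:2}. The natural starting point is the fundamental identity~\eqref{eq:42} of part (viii), which I regard as the linchpin of the whole theorem. For $F, G \in \mathcal{A}$ it is essentially a bookkeeping computation: inserting the defining formula for the action $\Gamma(F,G)u$ and pairing against $v$ produces the sum $\frac12 \sum \sum \widetilde{\Gamma}(f_{p_i}, f_{p_{i'}})(\langle k_j, u\rangle \langle k_{j'}, v\rangle + \langle k_{j'}, u\rangle \langle k_j, v\rangle)$, while expanding the right-hand side of~\eqref{eq:42} via bilinearity and symmetry of $\widetilde{\Gamma}$ yields exactly the same expression. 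One then passes to general $F, G \in \mathbb{D}$ by approximating in the $\mathbb{D}$-norm and using continuity of both sides.

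Granting~\eqref{eq:42}, most of the algebraic parts follow formally. For (i), bilinearity is inherited from that of $\widetilde{\Gamma}$; symmetry $\Gamma(F,G) = \Gamma(G,F)$ and self-adjointness $\langle \Gamma(F,G)u,v\rangle = \langle u, \Gamma(F,G)v\rangle$ are both read off from the symmetries of the right-hand side of~\eqref{eq:42} in the pairs $(F,G)$ and $(u,v)$ respectively; positivity is the specialization $F = G$, $u = v$, which gives $\langle \Gamma(F,F)u,u\rangle = \widetilde{\Gamma}(\langle F,u\rangle, \langle F,u\rangle) \geq 0$. Parts (iv) and (v) are spectral: by construction $L$ acts as multiplication by $-\lambda_p$ on the factor $\ker(\widetilde{L}+\lambda_p\Id) \otimes K$, so its positivity, symmetry and the coincidence of its spectrum with that of $-\widetilde{L}$ are immediate, while $L^{-1}$ is the spectral multiplier by $-1/\lambda_p$ on the orthogonal complement of the kernel, which yields $LL^{-1}F = F - \E(F)$ directly and has the asserted regularity through the spectral gap $\lambda_1 > 0$ and the decay $1/\lambda_p \to 0$; the domain statements are exactly Proposition~\ref{prop:2}. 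Part (vii), and with it the integration-by-parts formula~\eqref{eq:49} of (vi), I would obtain by feeding the one-dimensional relation $\widetilde{\Gamma}(f,g) = \frac12(\widetilde{L}(fg) - g\widetilde{L}f - f\widetilde{L}g)$ and~\eqref{eq:24} through~\eqref{eq:42}: taking $u = v = k_\ell$ and summing over an orthonormal basis converts $\langle \Gamma(F,G)k_\ell, k_\ell\rangle$ into $\on{tr}\Gamma(F,G)$ and the scalar carré du champ terms into $\langle \cdot, \cdot\rangle$-pairings, after which taking expectations (under which the scalar generator term disappears) and using the definition of $L$ on tensors yields~\eqref{eq:49}.

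\emph{The main obstacle.} The genuinely delicate part is the diffusion identity~\eqref{eq:40} of (iii), and this is where I expect the real work to lie. The strategy is again to test against $u, v$ and invoke~\eqref{eq:42}, reducing the claim to the scalar statement that $\widetilde{\Gamma}(\langle \varphi(F), u\rangle, \langle \psi(G), v\rangle)$ is computed by the one-dimensional chain rule~\eqref{eq:15}. For $F \in \mathcal{A}$ this is legitimate because $F$ takes values in the finite-dimensional span of the finitely many basis vectors $k_j$ occurring in it and involves only finitely many scalar eigenfunctions, so that $\langle \varphi(F), u\rangle$ is a genuine continuously differentiable function of those scalar random variables; the obstacles are then (a) rewriting the chain-rule output in terms of the Fréchet derivative $\nabla \varphi(F)$ and recognizing its transpose as the adjoint $\nabla \varphi(F)^{\ast}$, which is precisely what produces the symmetrized sandwich $\nabla \varphi(F)^{\ast}\Gamma(F,G)\nabla \psi(G) + \nabla \psi(G)^{\ast}\Gamma(F,G)\nabla \varphi(F)$, and (b) promoting this identity, together with the membership $\varphi(F), \psi(G) \in \mathbb{D}$, from $\mathcal{A}$ to all of $\mathbb{D}$. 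Step (b) rests on part (ii), the completeness of $\mathbb{D}$ under $\norm{\cdot}_{\mathbb{D}}$, which I would deduce from the closability of the one-dimensional form: since $\norm{\Gamma(F,F)}_{L^1(\Omega;\mathcal{S}_1)} = \E(\on{tr}\Gamma(F,F)) = \mathcal{E}(F,F)$ admits the spectral expression of Proposition~\ref{prop:2}, a $\norm{\cdot}_{\mathbb{D}}$-Cauchy sequence is $\widetilde{\mathcal{E}}$-Cauchy componentwise and its limit is controlled by the one-dimensional closed form. Combining the Lipschitz bound on $\varphi$ (to keep $\varphi(F) \in L^2(\Omega;K)$) with this closability then lets one pass to the limit in~\eqref{eq:40} and simultaneously confirm $\varphi(F), \psi(G) \in \mathbb{D}$.
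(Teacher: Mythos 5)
Your proposal is correct and follows essentially the same route as the paper: the fundamental identity~\eqref{eq:42} is the bridge that reduces every assertion to its scalar counterpart, the parts other than $(iii)$ are routine consequences of the tensorized construction, and the diffusion identity~\eqref{eq:40} is obtained by passing to finite dimensions, applying the scalar chain rule~\eqref{eq:15} through~\eqref{eq:42}, and then taking a limit. The only cosmetic difference is that the paper truncates only the $K$-dimension via projections $\rho_n$ onto $K_n$ and lets $n \to \infty$, whereas you additionally truncate the chaos expansion by working on $\mathcal{A}$ and then invoke closedness of $\Gamma$ to extend to $\mathbb{D}$; both are the same density argument in substance.
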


\begin{proof}
  Parts $(i)-(ii)$ and $(iv)-(viii)$ are straightforward to verify. In order to prove $(iii)$, write
\begin{equation*}
 F = \sum_{p=0}^{\infty} \sum_{i=1}^{\infty} f_p \otimes k_i  \qquad \text{ and } \qquad
 G = \sum_{p=0}^{\infty} \sum_{i=1}^{\infty} g_p \otimes k_i,
\end{equation*}
where the $f_p$ and $g_p$ are eigenfunctions of $\widetilde{L}$ with eigenvalue $-\lambda_p$,  and $\left\{ k_i \colon i \in \N \right\}$ is an orthonormal basis of $K$. Let $K_{n}=\on{span} \left\{ k_i \colon 1 \leq i \leq n \right\}$ and $\rho_{n}$ be the orthogonal projection onto $L^2(\Omega;K_{n})$, so that
\begin{equation*}
\rho_n(F)= \sum_{p=0}^{\infty} \sum_{i=1}^{n} f_p \otimes k_i  \qquad \text{ and } \qquad
 \rho_{n}(G) = \sum_{p=0}^{\infty} \sum_{i=1}^{n} g_p \otimes k_i.
\end{equation*}
Denote by $\mathfrak{i}_n \colon K_n \to \R^n$ the canonical isometric isomorphism mapping $K_n$ to $\R^n$ so that $\xi_n = \mathfrak{i}_n \circ \rho_n(F) \in \R^n$ and $\upsilon_n = \mathfrak{i}_n \circ \rho_n(G) \in \R^n$.

Let $\widetilde{\varphi}_n = \varphi \circ \mathfrak{i}^{-1}_n$ and $\widetilde{\psi}_n=\psi \circ \mathfrak{i}^{-1}_n$. Then  $\widetilde{\varphi}_n \colon \R^n \to K$ is Lipschitz and Fréchet differentiable, with Fréchet derivative given by
\begin{equation*}
  \nabla \widetilde{\varphi}_n(x) (y)
  =
  \nabla\varphi (\mathfrak{i}_n^{-1}(x)) (\mathfrak{i}^{-1}_n(y))
\end{equation*}
for all $x,y \in \R^n$ and an analogous result is true for $\nabla \widetilde{\psi}_n$.
Therefore, via
\begin{equation*}
  \Gamma \left( \varphi(\rho_n(F)), \psi(\rho_n(G)) \right)
=
\Gamma \left( \widetilde{\varphi}_n(\xi_n),\widetilde{\psi}_n(\upsilon_n)\right)
\end{equation*}
and identity~\eqref{eq:42}, the assertion can be transformed into an equivalent assertion for $\widetilde{\Gamma}$, which can then be verified by  tedious but straightforward calculations, using the diffusion property~\eqref{eq:15} for $\widetilde{\Gamma}$ and then letting $n \to \infty$.

\end{proof}

The most important example in our context is the Dirichlet structure
given by the Ornstein-Uhlenbeck generator of a Hilbert-valued
Ornstein-Uhlenbeck semigroup. Here, $-L=\delta D$, where $D$ and
$\delta$ denote the Malliavin derivative and divergence operator, and
the carré du champ operator is given by $\Gamma(X,Y) = \left\langle
  DX,DY \right\rangle_{\mathfrak{H}}$, where $\mathfrak{H}$ is the
Hilbert space associated to the underlying isonormal Gaussian process (see Section~\ref{sec:hilb-valu-wien} for full details). The corresponding eigenspaces are known as Wiener chaos and spanned by the infinite-dimensional Hermite polynomials. In the same way, one can obtain Jacobi, Laguerre or other polynomial chaoses (see for example~\cite{azmoodeh-campese-poly:2014:fourth-moment-theorems} for the real-valued case). We refer to the monographs quoted at the beginning of this section for further numerous examples.

\section{Approximation of Hilbert-valued Gaussians}
\label{sec:appr-hilb-valu}

In this section, we combine Stein's method introduced in Section~\ref{sec:gauss-meas-steins} with the Dirichlet structure defined in Section~\ref{sec:dirichl-struct-hilb} in order to derive bounds on a probabilistic metric between the laws of square integrable random variables and a Gaussian, both taking values in some separable Hilbert space.

Throughout the whole section, this separable Hilbert space will be denoted by $K$, and we furthermore assume as given a Dirichlet structure on $L^2(\Omega;K)$
as introduced in the previous section, with Dirichlet domain $\mathbb{D}$, carré du champ operator $\Gamma$ and associated generator $L$.

The probabilistic distance we use is the well-known $d_2$-metric, given by
\begin{equation}
 \label{eq:45}
 d_2(F,G) =
 \sup_{
   \substack{
     h \in C_b^2(K)
     \\ \left\lVert h \right\rVert_{C_b^2(K)} \leq 1
   }
 }
 \left| \E \left( h(F) \right)
   -
   \E \left( h(G) \right)
 \right|,
\end{equation}
where $C_b^2(K)$ denotes the twice Fréchet differentiable, real-valued functions on $K$ with uniformly bounded first and second derivatives (see Section~\ref{sec:steins-equation-its}). In an infinite-dimensional context, this distance has already been used in~\cite{coutin-decreusefond:2013:steins-method-brownian} and, in a weakened form, also in~\cite{barbour:1990:steins-method-diffusion}. As already observed in~\cite{coutin-decreusefond:2013:steins-method-brownian}, it metrizes convergence in distribution:

\begin{lemma}
  \label{lem:8}
  If $\left\{ F_n \colon n \in \N_0 \right\}$ is a sequence of $K$-valued random variables such that
\begin{equation*}
  d_{2}(F_n,F_0) \to 0
\end{equation*}
as $n \to \infty$, then the law of $F_n$ converges in distribution to the law of $F_0$, i.e.,
\begin{equation*}
  \E \left( h(F_n) \right) \to \E \left( h(F_0) \right)
\end{equation*}
as $n \to \infty$, for all bounded, real-valued and continuous functions $h$ on $K$.
\end{lemma}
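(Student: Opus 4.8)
The plan is to upgrade the convergence of expectations against the restricted class of functions appearing in~\eqref{eq:45} to convergence against all bounded continuous functions. I would proceed in three steps: reduce the target class to bounded Lipschitz functions, approximate each such function by an admissible test function through a Gaussian regularization, and control the regularization error uniformly in $n$.

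For the reduction, recall that on the separable metric space $K$ the bounded Lipschitz functions form a convergence-determining class: it is a general fact, independent of any tightness, that $F_n$ converges to $F_0$ in distribution as soon as $\Ex{g(F_n)} \to \Ex{g(F_0)}$ for every bounded and Lipschitz $g \colon K \to \R$ (combine the Portmanteau theorem with the uniform approximation of bounded, uniformly continuous functions by their Lipschitz inf-convolutions). It therefore suffices to establish this convergence for a fixed bounded Lipschitz $g$.

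To bring such a $g$ within reach of $d_2$, I would regularize it by the Ornstein--Uhlenbeck semigroup $\{P_t\}$ of a fixed non-degenerate centered Gaussian on $K$, setting $g_t = P_t g$. Differentiating the Mehler representation under the integral sign and integrating by parts against the Gaussian produces bounded Fréchet derivatives for each $t > 0$; verifying that the Hessian is moreover Hilbert--Schmidt, so that $g_t \in C_b^2(K)$ with $\norm{g_t}_{C_b^2(K)} < \infty$, is the point where the trace-class structure of the Gaussian covariance enters (cf.\ the Goodman-type argument in the remark after Theorem~\ref{steinlemmabanachshi}). Granting this, $g_t / \norm{g_t}_{C_b^2(K)}$ is admissible in~\eqref{eq:45}, and the triangle inequality gives
\begin{equation*}
  \abs{\Ex{g(F_n)} - \Ex{g(F_0)}}
  \leq
  \Ex{\abs{g(F_n) - g_t(F_n)}}
  +
  \norm{g_t}_{C_b^2(K)}\, d_2(F_n,F_0)
  +
  \Ex{\abs{g_t(F_0) - g(F_0)}}.
\end{equation*}
For fixed $t$ the middle term tends to $0$ as $n \to \infty$ by hypothesis, and the last term tends to $0$ as $t \to 0$ by dominated convergence, since $g_t \to g$ pointwise with $\abs{g_t} \leq \norm{g}_\infty$.

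The main obstacle is the first term, which must be shown to be small as $t \to 0$ \emph{uniformly in $n$}. The Lipschitz estimate yields $\Ex{\abs{g(F_n) - g_t(F_n)}} \lesssim (1 - e^{-t}) \Ex{\norm{F_n}} + \sqrt{1 - e^{-2t}}$, so that a uniform bound $\sup_n \Ex{\norm{F_n}} < \infty$---or, more robustly, uniform tightness of $\{F_n\}$---would close the argument. This is exactly where infinite-dimensionality bites: the Hilbert--Schmidt constraint on the Hessian rules out radial test functions, so it is not immediate that $d_2$ controls $\Ex{\norm{F_n}}$ or the tails of $F_n$. I would extract the missing tightness by feeding into $d_2$ admissible functions of diagonal type, $\sum_k a_k \psi(\langle \cdot, e_k \rangle)$ with $(a_k) \in \ell^2$ and $\psi$ a fixed bounded smooth profile; these have finite $C_b^2(K)$-norm, because their Hessians are diagonal with square-summable entries and hence Hilbert--Schmidt, yet they remain sensitive to mass escaping to infinity along high coordinates, which is what forces the uniform tail control. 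Once tightness is in hand one may alternatively bypass the smoothing entirely and argue by Prokhorov: the convergence of all finite-dimensional projections (immediate, since cylinder functions have finite-rank, hence Hilbert--Schmidt, Hessians and thus belong to $C_b^2(K)$) identifies the law of $F_0$ as the unique weak subsequential limit.
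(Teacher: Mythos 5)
The paper offers no internal argument to compare yours against: it simply cites \cite[Lemma 4.1]{coutin-decreusefond:2013:steins-method-brownian} and asserts that the proof carries over verbatim. Your architecture (reduce to bounded Lipschitz test functions, mollify by the Ornstein--Uhlenbeck semigroup of a reference Gaussian, split into three error terms) is the natural one, and the reduction and the smoothing estimates are fine. The problem sits exactly where you place it, but it is worse than you suggest: it is not a step waiting for a cleverer choice of admissible functions.

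The gap is the claim that the diagonal test functions $\sum_k a_k\psi(\langle\cdot,e_k\rangle)$, $(a_k)\in\ell^2$, ``force the uniform tail control.'' They do not, and the obstruction is quantitative: a unit of $\ell^2$-mass spread evenly over $n$ high coordinates is invisible, up to an error of order $n^{-1/2}$, to every $h$ with $\sup_x\lVert\nabla^2h(x)\rVert_{\on{HS}}\leq 1$. Concretely, on $K=\ell^2(\N)$ let $G_n=n^{-1/2}\sum_{k=n+1}^{2n}\xi_ke_k$ with $\xi_k$ i.i.d.\ standard Gaussian, so that $G_n$ has covariance $S_n=n^{-1}\sum_{k=n+1}^{2n}e_k\otimes e_k$ with $\lVert S_n\rVert_{\on{HS}}=n^{-1/2}$. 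Since $G_n$ is supported on a finite-dimensional subspace, the finite-dimensional Gaussian interpolation (the same computation that proves Corollary~\ref{cor:1}) gives $d_2(G_n,0)\leq\tfrac12\lVert S_n\rVert_{\on{HS}}\to0$; yet $\lVert G_n\rVert^2\to1$ in probability by the law of large numbers, so $\E(e^{-\lVert G_n\rVert^2})\to e^{-1}\neq 1$ and $G_n$ does not converge weakly to $\delta_0$ (adding an independent non-degenerate Gaussian shows the phenomenon is not an artifact of a degenerate limit). Your diagonal functions see nothing here: $\E(h(G_n))-h(0)=\sum_{k=n+1}^{2n}a_k\bigl(\E(\psi(\xi_k/\sqrt n))-\psi(0)\bigr)=O(n^{-1/2})$ by a Taylor expansion and Cauchy--Schwarz. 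So tightness is genuinely additional information not encoded in $d_2$ as defined in~\eqref{eq:45}; closing your argument requires either an extra hypothesis (uniform integrability of $\lVert F_n\rVert$, or tightness assumed outright) or a strictly larger test class, e.g.\ Hessians bounded in operator rather than Hilbert--Schmidt norm, which readmits the radial functions your argument needs.
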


\begin{proof}
  The proof given in~\cite[Lemma 4.1]{coutin-decreusefond:2013:steins-method-brownian} for $K=\ell^2(\N)$  continues to work without any modification.
\end{proof}

\subsection{An abstract carr\'e du champ bound}
\label{sec:an-abstract-carre}

The following general bound between the laws of a square integrable
$K$-valued random variable in the Dirichlet domain $\mathbb{D}$ and an arbitrary
Gaussian random variable holds.

\begin{theorem}
  \label{thm:1}
  Let $Z$ be a centered, non-degenerate Gaussian random variable on
  $K$ with covariance operator $S$ and let $F \in \mathbb{D}$. Then
  \begin{align}
    \label{eq:23}
    d_2 (F,Z)
    &\leq
      \frac{1}{2}
      \left\lVert
          \Gamma(F,-L^{-1}F) - S
      \right\rVert_{L^2(\Omega;\on{HS})}.
  \end{align}
  If $K$ has dimension $d<\infty$, then
\begin{equation}
 \label{eq:53}
    d_W (F,Z)
    \leq
    C_{S,d}
      \left\lVert
          \Gamma(F,-L^{-1}F) - S
      \right\rVert_{L^2(\Omega;\on{HS})},
\end{equation}
where $d_W$ denotes the Wasserstein distance, and
\begin{equation*}
  C_{S,d} =
 \sqrt{d   \left\lVert S \right\rVert_{\on{op}}}
    \left\lVert S^{-1} \right\rVert_{\on{op}}.
\end{equation*}
\end{theorem}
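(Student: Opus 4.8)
The plan is to run Stein's method with the Stein equation~\eqref{eq:4}, converting the drift term $\Ex{\langle F, \nabla f(F)\rangle}$ into a carré du champ expression via the integration by parts formula~\eqref{eq:49} and the diffusion identity~\eqref{eq:40}. Fix $h \in C^2_b(K)$ with $\norm{h}_{C^2_b(K)} \leq 1$ and let $f = f_h$ be the Stein solution~\eqref{steinsolus}, which by Lemma~\ref{lem:4} again lies in $C^2_b(K)$ with $\sup_u \norm{\nabla^2 f(u)}_{K^{\otimes 2}} \leq \tfrac{1}{2}$. Evaluating~\eqref{eq:4} at $x = F$ and taking expectations gives
\begin{equation*}
  \Ex{h(F)} - \Ex{h(Z)} = \Ex{\Delta_G f(F)} - \Ex{\langle F, \nabla f(F)\rangle}.
\end{equation*}
I would first identify the Gross Laplacian with an $S$-weighted trace: since the Cameron--Martin space $H$ of $Z$ carries the inner product $\langle S^{-1/2}\cdot, S^{-1/2}\cdot\rangle_K$, an orthonormal basis of $H$ is $\{S^{1/2}g_i\}$ for any orthonormal basis $\{g_i\}$ of $K$, whence $\Delta_G f(x) = \on{tr}_H(\nabla^2 f(x)) = \on{tr}(S\,\nabla^2 f(x)) = \langle S, \nabla^2 f(x)\rangle_{\on{HS}}$, with $\nabla^2 f$ the Fréchet Hessian.

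The heart of the argument is the drift term, which I would lift from the real-valued identity $\Ex{F f(F)} = \Ex{f'(F)\Gamma(F,-L^{-1}F)}$. Assuming $\Ex{F} = 0$ (needed for the stated bound, as the right-hand side cannot detect a nonzero mean), write $F = -L(-L^{-1}F)$ and set $G = -L^{-1}F$. Then~\eqref{eq:49} gives
\begin{equation*}
  \Ex{\langle F, \nabla f(F)\rangle} = -\Ex{\langle LG, \nabla f(F)\rangle} = \Ex{\on{tr}\Gamma(G, \nabla f(F))}.
\end{equation*}
Applying~\eqref{eq:40} to the pair $(G,F)$ with the first map the identity and the second equal to $\nabla f$ (Lipschitz and Fréchet differentiable since $f \in C^2_b(K)$) yields $\Gamma(G, \nabla f(F)) = \tfrac{1}{2}\bigl(\Gamma(F,G)\nabla^2 f(F) + \nabla^2 f(F)\Gamma(F,G)\bigr)$, so that by cyclicity of the trace (legitimate as $\Gamma(F,G)$ is trace class and $\nabla^2 f(F)$ bounded), $\on{tr}\Gamma(G, \nabla f(F)) = \langle \nabla^2 f(F), \Gamma(F,-L^{-1}F)\rangle_{\on{HS}}$. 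Combining the displays produces the master identity $\Ex{h(F)} - \Ex{h(Z)} = \Ex{\langle \nabla^2 f(F), S - \Gamma(F,-L^{-1}F)\rangle_{\on{HS}}}$.

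From here the two bounds are routine. For~\eqref{eq:23}, Cauchy--Schwarz in $\on{HS}$ together with $\norm{\nabla^2 f(F)}_{\on{HS}} \leq \tfrac{1}{2}$ and Jensen's inequality give $\abs{\Ex{h(F)} - \Ex{h(Z)}} \leq \tfrac{1}{2}\norm{\Gamma(F,-L^{-1}F) - S}_{L^2(\Omega;\on{HS})}$, and the supremum over admissible $h$ finishes the proof. For the finite-dimensional Wasserstein bound~\eqref{eq:53}, the same master identity holds for $1$-Lipschitz $h$, and I would estimate $\norm{\nabla^2 f(F)}_{\on{HS}} \leq \sqrt{d}\,\norm{\nabla^2 f(F)}_{\on{op}}$ and invoke a Stein-factor bound $\sup_x \norm{\nabla^2 f_h(x)}_{\on{op}} \leq \norm{S}_{\on{op}}^{1/2}\norm{S^{-1}}_{\on{op}}$, obtained via the reduction $f_h(x) = \phi(S^{-1/2}x)$ to the standard Gaussian solution $\phi$ for $h \circ S^{1/2}$ (which is $\norm{S}_{\on{op}}^{1/2}$-Lipschitz), using $\nabla^2 f_h(x) = S^{-1/2}\nabla^2\phi(S^{-1/2}x)S^{-1/2}$.

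The main obstacle is the drift-term manipulation: verifying that~\eqref{eq:40} may legitimately be applied to $\nabla f$ (so that $f \in C^2_b(K)$ from Lemma~\ref{lem:4} is essential) and that the resulting trace identity survives in infinite dimensions, where one must ensure $\Gamma(F,-L^{-1}F)$ is trace class and $\nabla^2 f(F)$ bounded so that $\on{tr}(\nabla^2 f(F)\,\Gamma(F,-L^{-1}F)) = \langle \nabla^2 f(F), \Gamma(F,-L^{-1}F)\rangle_{\on{HS}}$. A secondary delicate point is the passage from $C^2_b$ to merely Lipschitz test functions for the Wasserstein bound, where the smoothing property of the Ornstein--Uhlenbeck semigroup is what guarantees the required Hessian estimates.
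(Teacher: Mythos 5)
Your proposal is correct and follows essentially the same route as the paper: Stein's equation, integration by parts~\eqref{eq:49} plus the diffusion identity~\eqref{eq:40} to turn the drift term into $\E\left(\tr_K(\nabla^2 f_h(F)\,\Gamma(F,-L^{-1}F))\right)$, the identification $\tr_H \nabla^2 f_h = \tr_K(\nabla^2 f_h\, S)$ via the Cameron--Martin space, Cauchy--Schwarz for Hilbert--Schmidt operators together with Lemma~\ref{lem:4}, and in finite dimensions the Stein-factor estimate of Nourdin--Peccati (which you spell out where the paper merely cites it). Your remark that $F=LL^{-1}F$ requires $\E(F)=0$ is a correct observation of an implicit hypothesis the paper leaves unstated.
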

\begin{proof}
  To prove~\eqref{eq:23}, it suffices to show that  for $h \in C_b^2(K)$ one has
\begin{multline}
  \label{eq:58}
  \left|   \E \left( \tr_H \nabla^2 f_h(F)
    -
    \left( F,\nabla f_h(F) \right)_{K,K^{\ast}}
  \right) \right|
  \\ \leq
  \frac{1}{2}
  \left\lVert \nabla^2 h \right\rVert_{C_b^2(K)}
\left\lVert
          \Gamma(F,-L^{-1}F) - S
      \right\rVert_{L^2(\Omega;\on{HS})},
\end{multline}
    where $f_h$ is the Stein solution given by \eqref{steinsolus}.
Indeed, using Stein's equation (see~\eqref{eq:4}), the left hand side of~\eqref{eq:58} is equal to $
\left|
    \E \left( h(X) \right)
    -
    \E \left( h(Z) \right)
  \right|$, so that the assertion follows after taking the supremum over $h$.

Identifying $K^{\ast}$ with $K$, using the integration by parts formula~\eqref{eq:49} and the diffusion property~\eqref{eq:40} for the carré du champ, we can write
  \begin{align*}
    \E \left(
    \left( F,\nabla f_h(F) \right)_{K,K^{\ast}} \right)
    &=
  \E \left( \left\langle F,  \nabla f_h(F)  \right\rangle_{K} \right)
  \\ &=
    \E \left( \left\langle  LL^{-1}F,\nabla f_h(F)  \right\rangle_{K} \right)
  \\ &=
       \E \left( \on{tr}_K \Gamma
       \left(\nabla f_h(F),-L^{-1}F  \right) \right)
         \\ &
    =
  \E \left(
  \on{tr}_K
  \left(
  \nabla^2f_h(F)
  \Gamma(F, -L^{-1} F)
   \right)
 \right).
\end{align*}
  Now let $H$ be the Cameron-Martin space associated to $Z$ as introduced in
  Section~\ref{sec:gauss-meas-steins}. As
  the covariance operator $S$ of $Z$ is compact and one-to-one, it holds that $S = \sum_{i \in \N}^{} \lambda_i \left\langle \cdot ,
    e_k \right\rangle_K e_i$ for some $\lambda_i>0$ and an orthonormal
  basis $\left\{ e_i \colon i \in \N \right\}$ of $H$ consisting of eigenvectors. Then $\left\{ k_i \colon i \in \N \right\}$, where $k_{i}=\frac{1}{\sqrt{\lambda_i}} e_i$, is an orthonormal basis of $K$, as $H=\sqrt{S}(K)$. It thus follows that
\begin{equation*}
  \on{tr}_H \nabla^2 f_h(F)
  =
    \sum_{i\in \N}^{} \nabla^2 f_h(F)(e_i,e_i)
\\  =
       \sum_{i\in \N}^{} \nabla^2 f_h(F)\left( S \, k_i, k_i \right)
=
       \on{tr}_K \left( \nabla^2 f_h(F) S \right).
\end{equation*}
Combining the last two calculations yields that
\begin{equation*}
 \E \left(
   \left\langle F,\nabla f_h(F) \right\rangle_{K,K^{\ast}}
   -
   \tr_H \nabla^2 f_h(F)
 \right)
 =
 \E \left(
   \tr_K \left( \nabla^2 f_h(F)
     \left( \Gamma(F,-L^{-1}F) - S \right)
   \right)
 \right),
\end{equation*}
and, taking absolute values and applying Hölder's inequality for the Schatten norms, we get
\begin{align}
&\left\lvert
  \E \left(
  \left\langle F,\nabla f_h(F) \right\rangle_{K,K^{\ast}}
-
   \tr_H \nabla^2 f_h(F)
 \right)
  \right\rvert \notag
 \\  &\qquad\qquad\qquad\qquad\quad = \notag
       \left|
        \E \left(
   \tr_K \left( \nabla^2 f_h(F)
     \left( \Gamma(F,-L^{-1}F) - S \right)
   \right)
 \right)
       \right|
  \\ & \qquad\qquad\qquad\qquad\quad \leq \notag
       \E
       \left(
       \tr_K
        \left|
         \nabla^2 f_h(F)
     \left( \Gamma(F,-L^{-1}F) - S \right)
       \right| \right)
  \\ & \qquad\qquad\qquad\qquad\quad \leq \notag
       \E \left(
       \left\lVert
       \nabla^2 f_h(F)
       \right\rVert_{\on{HS}(K)}
       \left\lVert
       \Gamma(F,-L^{-1}F) - S
       \right\rVert_{\on{HS}(K)}
       \right)\notag
  \\ &\qquad\qquad\qquad\qquad\quad \leq \label{eq:41}
       \left\lVert
       \nabla^2 f_h(F)
       \right\rVert_{L^{2}(\Omega;\on{HS}(K))}
       \left\lVert
       \Gamma(F,-L^{-1}F) - S
       \right\rVert_{L^{2}(\Omega;\on{HS}(K))}.
\end{align}
An application of Lemma~\ref{lem:4} now yields \eqref{eq:58}, finishing the proof of~\eqref{eq:23}. If $K$ has finite-dimension $d$, one can proceed
as in~\cite[Proposition 4.3.2]{nourdin-peccati:2012:normal-approximations-malliavin} to obtain that
\begin{equation*}
    \left\lVert
       \nabla^2 f_h(F)
     \right\rVert_{L^{2}(\Omega;\on{HS}(K))}
     \leq
     C_{S,d} \left\lVert h \right\rVert_{\on{Lip}},
\end{equation*}
where $\left\lVert \cdot \right\rVert_{\on{Lip}}$ denotes the Lipschitz norm.
The Wasserstein distance is then obtained by approximating Lipschitz functions in $C_b^2(K)$ (for example by convoluting a Gaussian kernel).
\end{proof}

If $Z$ is a $K$-valued Gaussian random variable with covariance operator $S$, then, taking $L$ to be the Ornstein-Uhlenbeck generator (see the forthcoming Section~\ref{sec:hilb-valu-wien}), one has that $\Gamma(Z,-L^{-1}Z) = S$. Therefore, taking $F$ to be Gaussian in Theorem~\ref{thm:1} yields a bound on the distance between two Gaussians $Z_1$, $Z_2$ in terms of the Hilbert-Schmidt norm of their covariance operators $S_1$, $S_2$. We state this as a corollary.
\begin{corollary}
  \label{cor:1}
  Let $Z_1, Z_2$ be two centered, non-degenerate Gaussian random variables on $K$ with covariance
operators $S_1, S_2$, respectively. Then, it holds that
  \begin{equation*}
    d_2 (Z_1,Z_2)
    \leq
    \frac{1}{2}
    \left\lVert
          S_1 -S_2 \right\rVert_{\on{HS}}.
  \end{equation*}
\end{corollary}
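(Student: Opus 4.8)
The plan is to obtain this corollary as an immediate specialization of the general bound~\eqref{eq:23} in Theorem~\ref{thm:1}, taking the random variable $F$ there to be the first Gaussian $Z_1$ and the reference Gaussian $Z$ to be $Z_2$. Since the metric $d_2$ depends only on the laws of its two arguments, I am free to realize $Z_1$ on whichever Dirichlet structure is most convenient, and the natural choice is the Ornstein–Uhlenbeck structure of Section~\ref{sec:hilb-valu-wien}, in which a centered Gaussian with covariance operator $S_1$ belongs to the first Wiener chaos (in particular to $\mathbb{D}$). The single ingredient I would record before applying the theorem is the identity $\Gamma(Z_1,-L^{-1}Z_1) = S_1$, which is exactly the relation stated in the paragraph preceding the corollary: on the first chaos one has $-L^{-1}Z_1 = Z_1$, the Malliavin derivative $DZ_1$ is deterministic, and therefore $\Gamma(Z_1,Z_1) = \left\langle DZ_1, DZ_1 \right\rangle_{\mathfrak{H}}$ coincides with the (deterministic, non-random) covariance operator $S_1$ of $Z_1$.

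With this identity in hand, Theorem~\ref{thm:1} applied with $F = Z_1$ and $S = S_2$ gives
\begin{equation*}
  d_2(Z_1, Z_2)
  \leq
  \frac{1}{2}
  \left\lVert \Gamma(Z_1,-L^{-1}Z_1) - S_2 \right\rVert_{L^2(\Omega;\on{HS})}
  =
  \frac{1}{2}
  \left\lVert S_1 - S_2 \right\rVert_{L^2(\Omega;\on{HS})}.
\end{equation*}
The crucial and final simplification is that, because $\Gamma(Z_1,-L^{-1}Z_1) = S_1$ is a non-random operator on $K$, the integrand $S_1 - S_2$ carries no $\omega$-dependence, so the $L^2(\Omega;\on{HS})$ norm collapses to the plain Hilbert–Schmidt norm $\left\lVert S_1 - S_2 \right\rVert_{\on{HS}}$, yielding the claimed bound.

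The only step requiring genuine input is the identity $\Gamma(Z_1,-L^{-1}Z_1) = S_1$, together with the attendant verification that $Z_1 \in \mathbb{D}$, and I expect this to be the main (albeit mild) obstacle: it rests on the Hilbert-valued Malliavin calculus and the first-chaos representation developed in Section~\ref{sec:hilb-valu-wien}, rather than on the abstract Dirichlet-structure axioms used up to this point. Once the representation of a centered Gaussian as a first-chaos element and the formula $\Gamma(\cdot,\cdot) = \left\langle D\,\cdot\,, D\,\cdot\,\right\rangle_{\mathfrak{H}}$ are available, everything else in the argument is purely formal.
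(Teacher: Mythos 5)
Your proposal is correct and follows essentially the same route as the paper: the text immediately preceding the corollary derives it by taking $L$ to be the Ornstein--Uhlenbeck generator, noting $\Gamma(Z,-L^{-1}Z)=S$ for a Gaussian $Z$, and specializing Theorem~\ref{thm:1} with $F=Z_1$. Your added remarks (that $d_2$ depends only on laws, that $Z_1$ sits in the first Wiener chaos so $-L^{-1}Z_1=Z_1$ and $DZ_1$ is deterministic, and that the $L^2(\Omega;\on{HS})$ norm of the non-random operator $S_1-S_2$ collapses to $\left\lVert S_1-S_2\right\rVert_{\on{HS}}$) simply make explicit what the paper leaves implicit.
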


We continue with some remarks on Theorem~\ref{thm:1}.

\begin{remark}
  \label{rmk:2}\hfill
  \begin{enumerate}[(i)]
  \item Note that the proof of Theorem~\ref{thm:1} does not use  diagonalizability of $L$, so that this assumption can be replaced by weaker conditions guaranteeing that a pseudo-inverse can still be defined (in a finite-dimensional context, this has been done in~\cite{bourguin-campese-leonenko-ea:2019:four-moments-theorems}). However, we will not need this level of generality.
\item While $\left\lVert \Gamma(F,-L^{-1}F) - S
      \right\rVert_{\on{HS}}$ is almost surely finite for any $F \in \mathbb{D}$, it might be that $\left\lVert \Gamma(F,-L^{-1}F) - S \right\rVert_{L^2(\Omega;\on{HS})}$ is infinite. A simple sufficient condition for finiteness of the latter norm is that $F$ has finite chaos decomposition (see Section~\ref{sec:fourth-moment-bounds}). In the case of an infinite decomposition, some control on the tail is needed.
\item In principle, Theorem~\ref{thm:1} can also be used to prove weak convergence in a Banach space setting. Starting from a Gaussian random variable on a separable Banach space $B$, it is always possible (see~\cite[Lemma 2.1]{kuelbs:1970:gaussian-measures-banach}) to densely embed $B$ in a separable Hilbert space $K$ such that the Borel sets of $B$ are generated by the inner product of $K$. Then, by applying our methods, one obtains weak convergence in $K$, which in turn implies weak convergence in $B$.
  \end{enumerate}
\end{remark}

\subsection{Fourth Moment bounds via chaos expansions}
\label{sec:fourth-moment-bounds}

In this section, we show how the carré du champ bounds obtained in Theorem
\ref{thm:1} can be further estimated by the first four moments of the approximating random variable or sequence. For this, we need to assume that the generator satisfies the following, generalized version of an abstract polynomial chaos property first stated in~\cite{azmoodeh-campese-poly:2014:fourth-moment-theorems} for the finite-dimensional case.

\begin{definition}
  \label{def:2}
  Denote by $\widetilde{L}$ the one-dimensional counterpart of $L$ as introduced in Section~\ref{sec:dirichl-struct-hilb} and recall that $L$ and $\widetilde{L}$ have the same spectrum. Let $\lambda$, $\eta$ be two of their common eigenvalues. Two eigenvectors $F \in \on{ker} \left( L + \lambda \on{Id} \right)$ and $G \in \on{ker} \left( L + \eta \on{Id} \right)$ are called \emph{jointly chaotic}, if
\begin{equation*}
  \left\langle F,G \right\rangle_K \in \bigoplus_{\substack{\alpha \in \Lambda \\ \alpha \leq \lambda + \eta}} \on{ker} \left( \widetilde{L} + \alpha \on{Id} \right),
\end{equation*}
where $\Lambda$ denotes the spectrum of $L$.
An eigenvector $F \in \on{ker}\left( L + \lambda \on{Id} \right)$ is called \emph{chaotic} if it is jointly chaotic with itself, i.e., if
\begin{equation*}
  \left\lVert F \right\rVert_K^2 \in \bigoplus_{\substack{\alpha \in \Lambda \\ \alpha \leq 2\lambda}} \ker \left( \widetilde{L} + \alpha \on{Id} \right).
\end{equation*}
The generator $L$ is called chaotic, if any two of its eigenfunctions are jointly chaotic.
\end{definition}

Prime examples of chaotic generators are those whose eigenspaces
consist of (closures of) multivariate polynomials, such as the
Hilbert-valued Ornstein-Uhlenbeck generator, Laguerre or Jacobi
generators, in finite or infinite dimension. The Ornstein-Uhlenbeck case will be covered in depth in Section~\ref{sec:hilb-valu-wien}, precise definitions for the other two generators can for example be found in~\cite{azmoodeh-campese-poly:2014:fourth-moment-theorems}.

We will also make use of the following covariance condition.

\begin{definition}
A random variable $F \in L^2(\Omega;K)$ is said to satisfy the
\emph{covariance condition} if it holds that
\begin{equation}
 \label{eq:18}
2\on{Cov} \left(
      \left\langle F,u \right\rangle
   ,
   \left\langle F,v \right\rangle
 \right)^2
 \leq
\on{Cov}
 \left(
   \left\langle F,u \right\rangle^2
   ,
   \left\langle F,v \right\rangle^2
 \right)
\end{equation}
for any two orthonormal vectors $u,v \in K$.
\end{definition}

It will be proved later that both the covariance condition and the chaotic property is satisfied whenever $F$ is an eigenfunction of the Ornstein-Uhlenbeck generator.

Now we can state the main result of this section.

\begin{theorem}
  \label{thm:14}
  Let $F \in \mathbb{D}$ with chaos expansion $F = \sum_{p=1}^\infty F_p $, where $LF_p=-\lambda_p F_p$ and assume that $L$ is chaotic and its eigenfunctions verify the covariance assumption~\eqref{eq:18}. Denote the covariance operators of $F_p$ by $S_p$, so that $F$ has covariance operator $S=\sum_{p=1}^{\infty} S_p$. Then
\begin{equation}
  \label{eq:61}
  \left\lVert \Gamma(F,-L^{-1}F) - S \right\rVert_{L^2(\Omega;\on{HS}(K))}
  \leq
  \sqrt{M(F) + C(F)},
\end{equation}
where
\begin{align}
  \label{eq:73}
  M(F)
  &=
    \frac{1}{\sqrt{3}}
       \sum_{p,q=1}^{\infty}
       c_{p,q}
\sqrt{
    \E \left( \left\lVert F_p \right\rVert^4 \right)
    \left(
           \E \left( \left\lVert F_q \right\rVert^4 \right)
     -
     \E \left( \left\lVert F_q \right\rVert^2 \right)^2
     -
     2
    \left\lVert S_{q} \right\rVert_{\on{HS}}^2
     \right)
    },
  \\
 \label{eq:74}
  C(F)
  &=
    \sum_{\substack{p,q \in \N\\p \neq q}}^{}
  a_{p,q}
  \on{Cov}
  \left(
  \left\lVert F_p \right\rVert^2
  ,
   \left\lVert F_q \right\rVert^2
      \right),
\end{align}
and the constants $a_{p,q}$ and $c_{p,q}$ are given by $a_{p,q} = (\lambda_p+\lambda_q)/2\lambda_q$ and
\begin{equation*}
c_{p,q} =
  \begin{cases}
    1+\sqrt{3}    & \qquad \text{if $p=q$,}
    \\
    a_{p,q} & \qquad \text{if $p \neq q$},
  \end{cases}
\end{equation*}
respectively.
\end{theorem}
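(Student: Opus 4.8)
The plan is to expand the Hilbert--Schmidt norm against a fixed orthonormal basis and reduce the whole estimate to a variance computation for scalar quantities, to which the one-dimensional chaos calculus applies. Fix an orthonormal basis $\{k_i\}$ of $K$ and write $f^{(i)} = \langle F, k_i\rangle \in \widetilde{\mathbb{D}}$. By the fundamental identity~\eqref{eq:42}, each matrix entry $\langle \Gamma(F,-L^{-1}F)k_i, k_j\rangle$ is a symmetric combination of terms $\widetilde{\Gamma}(f^{(i)}, -\widetilde{L}^{-1}f^{(j)})$, and the integration by parts formula~\eqref{eq:24} gives $\E(\widetilde{\Gamma}(g,-\widetilde{L}^{-1}h)) = \on{Cov}(g,h)$. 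Since $F$ has no chaos component of order zero, each $f^{(i)}$ is centered, whence $\E\langle\Gamma(F,-L^{-1}F)k_i,k_j\rangle = \langle Sk_i, k_j\rangle$; that is, $\Gamma(F,-L^{-1}F)-S$ is centered entrywise. Consequently
\begin{equation*}
  \norm{\Gamma(F,-L^{-1}F) - S}_{L^2(\Omega;\on{HS})}^2
  = \sum_{i,j} \on{Var}\!\left( \langle \Gamma(F,-L^{-1}F)k_i, k_j\rangle \right),
\end{equation*}
which is the quantity I must bound by $M(F) + C(F)$.

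Next I would insert the chaos expansion. Writing $f^{(i)} = \sum_p f^{(i)}_p$ with $f^{(i)}_p = \langle F_p, k_i\rangle \in \Ker{\widetilde{L} + \lambda_p\on{Id}}$ and using bilinearity, $\Gamma(F,-L^{-1}F) = \tfrac12\sum_{p,q}(\lambda_p^{-1}+\lambda_q^{-1})\Gamma(F_p,F_q)$. Separating $p=q$ from $p\neq q$ produces the decomposition $\Gamma(F,-L^{-1}F)-S = \sum_p B_p + \sum_{p\neq q} C_{pq}$, where $B_p = \lambda_p^{-1}\Gamma(F_p,F_p) - S_p$ and $C_{pq} = \tfrac12(\lambda_p^{-1}+\lambda_q^{-1})\Gamma(F_p,F_q)$ are all entrywise centered (the latter because eigenfunctions for distinct eigenvalues are $L^2$-orthogonal). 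The families $\{B_p\}$ and $\{C_{pq}\}$ account for $M(F)$ and $C(F)$ respectively. Expanding the squared norm and estimating the $L^2(\Omega;\on{HS})$ inner products by Cauchy--Schwarz is the core step; the asymmetric shape $\sqrt{\E(\norm{F_p}^4)\,(\cdots)_q}$ in~\eqref{eq:73} arises when estimating a cross term $\E\langle B_p, B_q\rangle_{\on{HS}}$ by first using $\E(B_q)=0$ to discard the deterministic part $S_p$, so that one factor is controlled by the full fourth moment (one checks $\lambda_p^{-2}\E\norm{\Gamma(F_p,F_p)}_{\on{HS}}^2 \leq \tfrac13\E(\norm{F_p}^4)$) and the other by the centered fluctuation $\E\norm{B_q}_{\on{HS}}^2$.

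The heart of the proof is then two analytic estimates. For the diagonal family I would bound $\E\norm{B_q}_{\on{HS}}^2$, the operator variance of $\lambda_q^{-1}\Gamma(F_q,F_q)$, by the scalar fourth-moment excess $\E(\norm{F_q}^4) - \E(\norm{F_q}^2)^2 - 2\norm{S_q}_{\on{HS}}^2$. Using~\eqref{eq:42} again, this reduces to controlling sums of $\on{Var}(\widetilde{\Gamma}(f_q^{(i)},f_q^{(j)}))$ by one-dimensional fourth cumulants exactly as in the diffusive fourth moment theorem, the chaotic property of $L$ ensuring that $\widetilde{\Gamma}(f_q^{(i)},f_q^{(j)})$ carries chaos components only of order at most $2\lambda_q$ and producing the factor $\tfrac13$. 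The covariance condition~\eqref{eq:18}, applied with $u=k_i$ and $v=k_j$, is what renders this excess nonnegative: through the identity
\begin{equation*}
  \E(\norm{F_q}^4)-\E(\norm{F_q}^2)^2 - 2\norm{S_q}_{\on{HS}}^2
  = \sum_{i,j}\Big(\on{Cov}\big((f_q^{(i)})^2,(f_q^{(j)})^2\big) - 2\on{Cov}\big(f_q^{(i)},f_q^{(j)}\big)^2\Big),
\end{equation*}
each summand is nonnegative. For the off-diagonal family, the $L^2(\Omega;\on{HS})$ second moments of the $C_{pq}$ assemble, after chaos orthogonality discards the non-matching grades, into trace-type contractions recognized as $\on{Cov}(\norm{F_p}^2,\norm{F_q}^2)$, carrying the weight $a_{p,q} = (\lambda_p+\lambda_q)/2\lambda_q$.

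The main obstacle is the sharp diagonal estimate together with the bookkeeping of the cross terms: one must show that the operator-valued variance $\E\norm{B_q}_{\on{HS}}^2$ is genuinely controlled by the scalar fourth-moment excess in infinite dimension, rather than by a crude dimension-dependent bound, which forces one to treat all matrix entries through the one-dimensional chaos calculus simultaneously and to recombine them via the covariance condition. One must then organize the resulting doubly-infinite sums of Cauchy--Schwarz bounds so that precisely the constants $c_{p,q}$, $a_{p,q}$ and the prefactor $1/\sqrt{3}$ emerge, while verifying absolute convergence of every series under the standing hypothesis $F\in\mathbb{D}$.
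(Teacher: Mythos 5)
Your skeleton matches the paper's: expand against an orthonormal basis so that the Hilbert--Schmidt norm becomes a double sum of variances of scalar carr\'e du champ operators $\widetilde{\Gamma}(F_{p,i},-\widetilde{L}^{-1}F_{q,j})$, feed in the one-dimensional fourth-moment variance inequality with constant $1/3$, and use the covariance condition~\eqref{eq:18} to reassemble the entrywise excesses into $\E ( \norm{F_q}^4 ) -\E ( \norm{F_q}^2 )^2-2\norm{S_q}_{\on{HS}}^2$ --- that last identity is exactly the paper's step~\eqref{eq:72}. The gaps are in how you organize the double sum over chaos orders. Once you expand the square of $\sum_p B_p+\sum_{p\neq q}C_{pq}$ you must control \emph{all} cross terms, not only $\E \langle B_p,B_q\rangle_{\on{HS}}$: the terms $\E\langle B_p,C_{qq'}\rangle_{\on{HS}}$ and $\E\langle C_{pq},C_{p'q'}\rangle_{\on{HS}}$ with $(p,q)\neq(p',q')$ do not vanish (the operators $\Gamma(F_p,F_q)$ for different index pairs are not orthogonal in $L^2(\Omega;\on{HS})$), and Cauchy--Schwarz applied to them produces square roots of products of covariance-type quantities that appear nowhere in $C(F)$, where the covariances enter linearly. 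As described, your expansion delivers a bound of the form $\bigl(\sum\sqrt{\cdot}\bigr)^2$, not $M(F)+C(F)$.

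Moreover, the two analytic inputs you invoke are not the ones the paper uses, and neither is established. The ``sharp diagonal estimate'' $\E ( \norm{B_q}_{\on{HS}}^2 ) \leq\frac{1}{3}\bigl(\E ( \norm{F_q}^4 ) -\E ( \norm{F_q}^2 ) ^2-2\norm{S_q}_{\on{HS}}^2\bigr)$, which you rightly flag as the main obstacle, is not proved in the paper: the $1/3$-variance bound of Azmoodeh--Campese--Poly is applied only to the diagonal entries $i=j$, while the entries $i\neq j$ are handled through the weaker multivariate bound of Campese--Nourdin--Peccati--Poly --- which is precisely why the diagonal constant is $c_{p,p}=1+\sqrt{3}$ rather than something of order $1/3$. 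Similarly, the off-diagonal variance is not simply $a_{p,q}\on{Cov}(\norm{F_p}^2,\norm{F_q}^2)$: the imported bound contains the additional term $-\E\bigl(\norm{F_p}^2\bigl(\widetilde{\Gamma}(F_{q,j},-\widetilde{L}^{-1}F_{q,j})-\E ( F_{q,j}^2 ) \bigr)\bigr)$, and it is the Cauchy--Schwarz estimate of \emph{that} term which generates the $p\neq q$ part of $M(F)$ with weight $a_{p,q}/\sqrt{3}$. Your alternative mechanism (cross terms $\E\langle B_p,B_q\rangle_{\on{HS}}$ between diagonal blocks) would instead produce the constant $1/3$, which exceeds $a_{p,q}/\sqrt{3}$ whenever $\lambda_p$ is sufficiently small compared to $\lambda_q$, so the stated constants cannot be recovered this way. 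In short: right reduction and right one-dimensional tools, but the cross-term bookkeeping and the two key estimates need to be replaced by the paper's termwise treatment of each pair $(p,q)$ via the triangle inequality and the full multivariate variance bound.
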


Before proving Theorem~\ref{thm:14}, let us give the following restatement of $M$ in terms of fourth moments only.

\begin{proposition}
  \label{prop:3}
  In the setting of Theorem~\ref{thm:14}, it holds that
  \begin{equation}
 \label{eq:69}
 M(F) =
 \frac{1}{\sqrt{3}}
 \sum_{p,q=1}^{\infty} c_{p,q}
 \sqrt{\E \left( \left\lVert F_p \right\rVert^4 \right)
   \left( \E \left( \left\lVert F_q \right\rVert^4 \right)
     -
     \E \left( \left\lVert Z_q \right\rVert^4 \right)
   \right)
},
\end{equation}
where the $Z_p$ are centered Gaussian random variables with the same covariance operators as the $F_p$.
\end{proposition}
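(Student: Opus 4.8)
The plan is to reduce the proposition to the single identity
\begin{equation*}
  \E \left( \left\lVert Z_q \right\rVert^4 \right)
  =
  \E \left( \left\lVert F_q \right\rVert^2 \right)^2 + 2 \left\lVert S_q \right\rVert_{\on{HS}}^2,
\end{equation*}
since a term-by-term comparison of~\eqref{eq:73} with~\eqref{eq:69} shows that the constants $c_{p,q}$ and the prefactors $\E(\norm{F_p}^4)$ agree, and the only discrepancy lies in the expression under the inner square root. The identity above is precisely what converts $\E(\norm{F_q}^4) - \E(\norm{F_q}^2)^2 - 2\norm{S_q}_{\on{HS}}^2$ into $\E(\norm{F_q}^4) - \E(\norm{Z_q}^4)$. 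Thus everything comes down to computing the fourth moment of the norm of a centered Gaussian on $K$ in terms of its covariance operator.

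To carry this out, I would diagonalize $S_q$: being a positive, self-adjoint, trace-class operator, it admits an orthonormal basis $\{e_i\}$ of $K$ of eigenvectors with eigenvalues $\sigma_i \geq 0$. Writing $Z_{q,i} = \langle Z_q, e_i \rangle$, the family $\{Z_{q,i}\}$ consists of centered, jointly Gaussian real random variables with covariances $\E(Z_{q,i} Z_{q,j}) = \langle S_q e_i, e_j \rangle = \sigma_i \delta_{ij}$; hence they are independent, with $Z_{q,i} \sim \mathcal{N}(0,\sigma_i)$. Expanding $\norm{Z_q}^4 = \big( \sum_i Z_{q,i}^2 \big)^2 = \sum_{i,j} Z_{q,i}^2 Z_{q,j}^2$ and taking expectations, using $\E(Z_{q,i}^4) = 3 \sigma_i^2$ for the diagonal terms and independence for the off-diagonal ones, yields
\begin{equation*}
  \E \left( \left\lVert Z_q \right\rVert^4 \right)
  =
  3 \sum_i \sigma_i^2 + \sum_{i \neq j} \sigma_i \sigma_j
  =
  \left( \sum_i \sigma_i \right)^2 + 2 \sum_i \sigma_i^2.
\end{equation*}

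Finally, I would identify the two sums with intrinsic quantities: $\sum_i \sigma_i = \on{tr} S_q = \E(\norm{F_q}^2)$ by the trace identity~\eqref{eq:16} (recalling that $Z_q$ and $F_q$ share the covariance operator $S_q$), and $\sum_i \sigma_i^2 = \norm{S_q}_{\on{HS}}^2$ since $S_q$ is diagonal in the basis $\{e_i\}$. Substituting this identity into~\eqref{eq:73} produces~\eqref{eq:69}. The computation is entirely routine; the only points deserving a word of care are the justification of the term-by-term expectation in the possibly infinite-dimensional setting, which follows from Tonelli's theorem since all terms are non-negative, and the a priori finiteness of $\E(\norm{Z_q}^4)$, guaranteed by Fernique's theorem for Gaussians with trace-class covariance.
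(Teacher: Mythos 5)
Your proposal is correct and follows essentially the same route as the paper: the paper's proof also reduces the statement to the single identity $\E ( \lVert Z_q \rVert^4 ) = \E ( \lVert F_q \rVert^2 )^2 + 2 \lVert S_q \rVert_{\on{HS}}^2$ (equation~\eqref{eq:22}), which it attributes to a ``straightforward calculation'' based on the cited references rather than carrying it out. Your diagonalization argument supplies exactly that calculation, and the supporting remarks on Tonelli and the finiteness of the fourth moment are sound.
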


\begin{proof}
Using similar arguments as in~\cite[Proof of Theorem 4.2]{nourdin-rosinski:2014:asymptotic-independence-multiple} combined with ~\cite[Theorem 2.1]{kannan-kannappan:1975:characterization-gaussian-measures}, a straightforward calculation yields
\begin{equation}
  \label{eq:22}
  \E \left( \left\lVert Z_p \right\rVert^4 \right)
  =
  \E \left( \left\lVert F_p \right\rVert^2 \right)^2
  +
  2
  \left\lVert S_{p} \right\rVert_{\on{HS}}^2.
\end{equation}
\end{proof}

\begin{proof}[Proof of Theorem~\ref{thm:14}]
  The idea of the proof is to transfer the Dirichlet structure from
  $L^2(\Omega;K)$ to $L^2(\Omega;\mathbb{R})$ by expanding in an
  orthonormal basis and working on the coefficients, afterwards
  reassembling everything again. To this end, let $\left\{ e_i \colon
    i \in \N \right\}$ be an orthonormal basis of $K$ and denote $F_i
= \left\langle F,e_i \right\rangle$, as well as $F_{p,i} =
\left\langle F_p,e_i \right\rangle$ for $i \in \N$. Note that
\begin{equation*}
  \left\langle
    \Gamma(F,-L^{-1}F)e_i,
    e_j
  \right\rangle
  =
  \widetilde{\Gamma}(F_i,-\widetilde{L}^{-1}F_j),
\end{equation*}
where $\widetilde{\Gamma}$ and $\widetilde{L}$ are the real-valued counterparts of $\Gamma$ and $L$ (see Section~\ref{sec:dirichl-struct-hilb}). To improve readability, we will not make any notational distinction between the real-valued and Hilbert-valued case and therefore denote $\widetilde{\Gamma}$ and $\widetilde{L}$ by the symbols $\Gamma$ and $L$ as well throughout the proof. The meaning can always unambigously be inferred from the context, depending on whether the arguments are $K$- or $\mathbb{R}$-valued.

Define the cross-covariance operators $C_{p,q} \colon K \to K$ via the identity
\begin{equation*}
  \E \left( \left\langle F_p,k \right\rangle \left\langle F_q,l \right\rangle \right) = \left\langle C_{p,q}k,l \right\rangle, \qquad k,l \in K.
\end{equation*}
Then, $C_{p,p}=S_p$ and, by orthogonality, $C_{p,q}=0$ if $p \neq q$. Therefore,
\begin{equation*}
  S = \sum_{p=1}^{\infty} S_p = \sum_{p,q=1}^{\infty} C_{p,q},
\end{equation*}
and consequently
\begin{align}
  \notag
  \left\lVert \Gamma(F,-L^{-1}F) - S \right\rVert_{L^2(\Omega;\on{HS}(K))}
  &\leq
    \sum_{p,q=1}^{\infty}
    \left\lVert \Gamma(F_p,-L^{-1}F_q) - C_{p,q} \right\rVert_{L^2(\Omega;\on{HS}(K))}
 \\ &\notag =
    \sum_{p,q=1}^{\infty}
    \sqrt{
    \sum_{i,j=1}^{\infty}
    \E \left(
    \left( \Gamma(F_{p,i},-L^{-1}F_{q,j}) - \E \left( F_{p,i} F_{q,j} \right) \right)^2
    \right)
    }
\\ &= \label{eq:71}
     \sum_{p,q=1}^{\infty}
     \sqrt{
     \sum_{i,j=1}^{\infty}
     \on{Var}
       \left(
       \Gamma(F_{p,i},-L^{-1}F_{q,j})
       \right)
     },
\end{align}
Note that all carré du champ operators appearing in the double sum~\eqref{eq:71} are acting on real valued random variables, so that known results from the finite-dimensional theory can be applied.

For $p=q$, Theorem 3.2 in~\cite{azmoodeh-campese-poly:2014:fourth-moment-theorems} yields
\begin{equation}
\on{Var}
       \left(
\Gamma(F_{q,j},-L^{-1}F_{q,j})
  \right)
  \leq
    \frac{1}{3}
    \left(
    \E \left( F_{q,j}^4 \right)
    -
    3
    \E \left( F_{q,j}^2 \right)^2
    \right),
\end{equation}
which, together with the covariance condition~\eqref{eq:18}, implies that
\begin{align}
  \notag
         \sum_{j=1}^{\infty}
       \on{Var}&
       \left(
\Gamma(F_{q,j},-L^{-1}F_{q,j})
  \right)
  \leq
    \frac{1}{3}
    \sum_{j=1}^{\infty}
    \left(
    \E \left( F_{q,j}^4 \right)
    -
    3
    \E \left( F_{q,j}^2 \right)^2
    \right)
  \\ &\leq \notag
    \frac{1}{3}
\sum_{i,j=1}^{\infty}
    \left(
    \E \left( F_{q,i}^2 F_{q,j}^2 \right)
  -
  \E \left( F_{q,i}^2 \right)
  \E \left( F_{q,j}^2 \right)
  -
  2
  \E \left( F_{q,i} F_{q,j} \right)^2
       \right)
  \\ &= \label{eq:72}
       \frac{1}{3}
       \left(
       \E \left(
       \left\lVert F_q \right\rVert^4
       \right)
       -
       \E \left(
       \left\lVert F_q \right\rVert^2
       \right)^2
       -
       2
       \left\lVert S_q \right\rVert_{\on{HS}}^2
       \right).
\end{align}

For $p \neq q$, similar calculations as in~\cite[Proof of Theorem 1.2]{campese-nourdin-peccati-ea:2016:multivariate-gaussian-approximations} (which in turn relied on the main ideas of~\cite{azmoodeh-campese-poly:2014:fourth-moment-theorems}) lead to
\begin{align*}
       \on{Var}
       \left(
       \Gamma(F_{p,i},-L^{-1}F_{q,j})
  \right)
  &\leq
  a_{p,q}
  \Big( 
       \E \left( F_{p,i}^2F_{q,j}^2 \right)
       -
       \E \left( F_{p,i}^2 \right) \E \left( F_{q,j}^2 \right)
       -
       2      \E \left( F_{p,i}F_{q,j} \right)^2
\\ &\qquad \qquad \qquad \qquad 
         -  \E \left(
       F_{p,i}^2
       \left(
       \Gamma(F_{q,j},-L^{-1}F_{q,j})
       -
       \E \left( F_{q,j}^2 \right)
       \right)
    \right)
  \Big),
\end{align*}
so that
\begin{align}
  \notag
  \sum_{i,j=1}^{\infty}
         &\on{Var}
       \left(
       \Gamma(F_{p,i},-L^{-1}F_{q,j})
  \right)
  \\ & \notag
  \leq
  a_{p,q}
  \left(
    \E \left(
      \left\lVert F_p \right\rVert^2
      \left\lVert F_q \right\rVert^2
      \right)
       -
       \E \left(
         \left\lVert F_p \right\rVert^2
       \right)
       \E \left(
         \left\lVert F_q \right\rVert^2
         \right)
       -
       2  \sum_{i,j=1}^{\infty} \E \left( F_{p,i}F_{q,j} \right)^2
       \right.
       \\ &\left.  \qquad \qquad - \notag
                  \sum_{j=1}^{\infty}
           \E \left(
       \left\lVert F_p \right\rVert^2
       \left(
\Gamma(F_{q,j},-L^{-1}F_{q,j})
       -
       \E \left( F_{q,j}^2 \right)
       \right)
    \right)
            \right)
  \\ &\leq \notag
         a_{p,q}
  \left(
    \E \left(
      \left\lVert F_p \right\rVert^2
      \left\lVert F_q \right\rVert^2
      \right)
       -
       \E \left(
         \left\lVert F_p \right\rVert^2
       \right)
       \E \left(
         \left\lVert F_q \right\rVert^2
         \right)
       -
       2  \sum_{i,j=1}^{\infty} \E \left( F_{p,i}F_{q,j} \right)^2
       \right.
  \\ &\left. \qquad \qquad + \notag
       \sqrt{\E \left(
       \left\lVert F_p \right\rVert^4
       \right)}
       \sqrt{
       \sum_{j=1}^{\infty}
       \on{Var}
       \left(
\Gamma(F_{q,j},-L^{-1}F_{q,j})
       \right)
       }
       \right).
\end{align}
Together with~\eqref{eq:72}, we thus get for $p=q$ that
\begin{align*}
\sum_{i,j=1}^{\infty}
         &\on{Var}
       \left(
       \Gamma(F_{p,i},-L^{-1}F_{p,j})
     \right) \notag
     \\ &\leq
     \E \left( \left\lVert F_p \right\rVert^4 \right)
     -
     \E \left( \left\lVert F_p \right\rVert^2 \right)^2
     -
     2
     \left\lVert S_p \right\rVert_{\on{HS}}^2 \notag
    \\ & \qquad \qquad +
    \sqrt{
      \frac{1}{3}
      \E \left( \left\lVert F_p \right\rVert^4 \right)
    }
    \sqrt{
           \E \left( \left\lVert F_p \right\rVert^4 \right)
     -
     \E \left( \left\lVert F_p \right\rVert^2 \right)^2
     -
     2
         \left\lVert S_p \right\rVert_{\on{HS}}^2} \notag
  \\ &\leq
       \frac{1+\sqrt{3}}{\sqrt{3}}
    \sqrt{
      \E \left( \left\lVert F_p \right\rVert^4 \right)
    }
    \sqrt{
           \E \left( \left\lVert F_p \right\rVert^4 \right)
     -
     \E \left( \left\lVert F_p \right\rVert^2 \right)^2
     -
     2
       \left\lVert S_p \right\rVert_{\on{HS}}^2
       }
\end{align*}
and for $p \neq q$ that
\begin{align*}
  \notag
\sum_{i,j=1}^{\infty}
         &\on{Var}
       \left(
       \Gamma(F_{p,i},-L^{-1}F_{q,j})
           \right)
  \\ &\leq
         a_{p,q}
  \left(
    \E \left(
      \left\lVert F_p \right\rVert^2
      \left\lVert F_q \right\rVert^2
      \right)
       -
       \E \left(
         \left\lVert F_p \right\rVert^2
       \right)
       \E \left(
         \left\lVert F_q \right\rVert^2
         \right)
\right)
  \\ & \qquad \qquad + \notag
\frac{a_{p,q}}{\sqrt{3}}
       \sqrt{
       \E \left( \left\lVert F_p \right\rVert^4 \right)
       }
       \sqrt{
           \E \left( \left\lVert F_q \right\rVert^4 \right)
     -
     \E \left( \left\lVert F_q \right\rVert^2 \right)^2
     -
     2
         \left\lVert S_q \right\rVert_{\on{HS}}^2
       },
\end{align*}
from which the asserted bound follows.

\end{proof}

Inspecting the proof of Theorem~\ref{thm:14}, it becomes apparent that
for the case where $F=F_p$ is a chaotic eigenfunction, we can remove one square root. In other words, the following holds.

\begin{corollary}
  \label{thm:17}
  If $F$ is a chaotic eigenfunction of $L$ and $Z$ a centered, non-degenerate Gaussian on $K$, both having covariance operator $S$, then
\begin{multline}
 \label{eq:76}
    \left\lVert \Gamma(F,-L^{-1}F) - S \right\rVert_{L^2(\Omega;\on{HS}(K))}
  \\  \leq
    \frac{1+\sqrt{3}}{\sqrt{3}}
    \sqrt{
      \E \left( \left\lVert F \right\rVert^4 \right)
    \left(
           \E \left( \left\lVert F \right\rVert^4 \right)
     -
     \E \left( \left\lVert F \right\rVert^2 \right)^2
     -
     2
    \left\lVert S\right\rVert_{\on{HS}}^2
     \right)
     }
\\    =
\frac{1+\sqrt{3}}{\sqrt{3}}
    \sqrt{
      \E \left( \left\lVert F \right\rVert^4 \right)
    \left(
           \E \left( \left\lVert F \right\rVert^4 \right)
     -
           \E \left( \left\lVert Z \right\rVert^4 \right)
     \right)
     }.
  \end{multline}
\end{corollary}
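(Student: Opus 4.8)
The plan is to re-run the proof of Theorem~\ref{thm:14} under the extra assumption that the chaos expansion of $F$ consists of the single term $F = F_p$, and to track the one spot where the general argument spends a square root that is unnecessary here. Fix an orthonormal basis $\left\{ e_i \colon i \in \N \right\}$ of $K$ and write $F_i = \left\langle F, e_i \right\rangle$. Since $F$ is an eigenfunction, each $F_i$ is a $\widetilde L$-eigenfunction with eigenvalue $-\lambda_p$, so that $-\widetilde L^{-1} F_j = \lambda_p^{-1} F_j$, and the integration by parts formula~\eqref{eq:49} gives $\E ( \widetilde\Gamma(F_i, -\widetilde L^{-1} F_j) ) = \E ( F_i F_j )$, i.e.\ the mean of each entry $\left\langle \Gamma(F,-L^{-1}F) e_i, e_j \right\rangle = \widetilde\Gamma(F_i, -\widetilde L^{-1} F_j)$ is the corresponding entry of $S$. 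Because the chaos index now runs over a single value, the off-diagonal chaos covariances defining $C(F)$ in~\eqref{eq:74} are absent, $C(F) = 0$, and the triangle inequality over chaos blocks in~\eqref{eq:71} is vacuous, so that its first two lines collapse to the exact identity
\[
  \left\lVert \Gamma(F,-L^{-1}F) - S \right\rVert_{L^2(\Omega;\on{HS}(K))}^2
  =
  \sum_{i,j=1}^{\infty} \on{Var} \left( \widetilde\Gamma(F_i, -\widetilde L^{-1} F_j) \right).
\]

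Into the right-hand side I would insert the diagonal-block estimate that the proof of Theorem~\ref{thm:14} already establishes for $p = q$, namely the chain assembled from the real-valued fourth moment bound (\cite[Theorem 3.2]{azmoodeh-campese-poly:2014:fourth-moment-theorems}), the covariance condition~\eqref{eq:18} used in~\eqref{eq:72}, and the control of the cross term; this bounds the sum by $\tfrac{1+\sqrt3}{\sqrt3} \sqrt{ \E ( \left\lVert F \right\rVert^4 ) } \sqrt{ \E ( \left\lVert F \right\rVert^4 ) - \E ( \left\lVert F \right\rVert^2 )^2 - 2 \left\lVert S \right\rVert_{\on{HS}}^2 }$. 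This is precisely the content of ``removing one square root'': in the general theorem the same block quantity is one of several summands and therefore ends up beneath the outer root of $\sqrt{M(F) + C(F)}$, whereas here it estimates the squared Hilbert--Schmidt norm directly, so no outer root is incurred. The reformulation in terms of $\E ( \left\lVert Z \right\rVert^4 )$ is then immediate from the Gaussian moment identity~\eqref{eq:22} of Proposition~\ref{prop:3}, which applies because $F$ and $Z$ have the same covariance operator $S$ and turns the factor $\E ( \left\lVert F \right\rVert^4 ) - \E ( \left\lVert F \right\rVert^2 )^2 - 2 \left\lVert S \right\rVert_{\on{HS}}^2$ into $\E ( \left\lVert F \right\rVert^4 ) - \E ( \left\lVert Z \right\rVert^4 )$. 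Along the way one records that this difference is non-negative: by~\eqref{eq:18} each off-diagonal contribution $\on{Cov}(F_i^2, F_j^2) - 2 \E(F_i F_j)^2$ is non-negative, and the chaotic property forces the diagonal fourth cumulants $\E(F_i^4) - 3 \E(F_i^2)^2$ to be non-negative.

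The step I expect to require the most care is the bookkeeping of the square roots. Concretely, I would need to verify that the only place the general proof introduces the outer root is the passage from the squared norm to a sum of one square root per chaos block — where sub-additivity of $x \mapsto \sqrt x$ is used — and to confirm that, with a single block present, this passage is an equality, so that the block estimate transfers verbatim to $\left\lVert \cdot \right\rVert_{L^2(\Omega;\on{HS})}^2$. Making this explicit, and in particular confirming that the block estimate is applied at the level of the squared Hilbert--Schmidt norm, is what turns the general estimate into the sharper single-chaos statement~\eqref{eq:76}.
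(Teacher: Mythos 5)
Your proposal is correct and follows essentially the same route as the paper, whose entire proof of Corollary~\ref{thm:17} is the remark that one should rerun the proof of Theorem~\ref{thm:14} for a single chaos block: there \eqref{eq:71} becomes an equality, $C(F)=0$, and the $p=q$ estimate bounds $\sum_{i,j}\on{Var}\bigl(\widetilde{\Gamma}(F_i,-\widetilde{L}^{-1}F_j)\bigr)$, which is exactly $\lVert \Gamma(F,-L^{-1}F)-S\rVert_{L^2(\Omega;\on{HS})}^{2}$, by the displayed quantity. Your closing observation that the estimate is obtained at the level of the \emph{squared} Hilbert--Schmidt norm is exactly right and in fact more careful than the printed statement: the argument (yours and the paper's alike) yields a bound on the squared norm, so the unsquared left-hand side of~\eqref{eq:76} would strictly require a further square root on the right (a scaling check $F\mapsto\varepsilon F$ shows the two sides of~\eqref{eq:76} as printed scale as $\varepsilon^{2}$ and $\varepsilon^{4}$ respectively), a discrepancy inherited from the paper rather than introduced by your argument.
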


Combining Theorems~\ref{thm:14} and~\ref{thm:1}, the following moment bound is obtained.

\begin{theorem}
  \label{thm:6}
Let $Z$ be a centered Gaussian, non-degenerate random variable on $K$, assume that $L$ is chaotic and let $F \in L^2(\Omega;K)$ with chaos expansion $F = \sum_{p=1}^\infty F_p $, where
$LF_p=-\lambda_p F_p$. Denote the covariance operators of $Z$, $F$ and $F_p$ by $S$ and $T$ and $S_p$, respectively. Then the following two statements are true.

\begin{enumerate}[(i)]
 \item If $F_p$ satisfies the covariance condition~(\ref{eq:18}) for all $p \in \N$, then
   \begin{equation}
     \label{eq:66}
    d_2(F,Z)
  \leq
       \frac{1}{2}
       \left( \sqrt{M(F) + C(F)} + \left\lVert S-T \right\rVert_{\on{HS}} \right)
\end{equation}
where the quantities $M(F)$ and $C(F)$ are given by~\eqref{eq:73} (or equivalently~\eqref{eq:69}) and~\eqref{eq:74}, respectively.
\item
If $F=F_p$ for some eigenfunction $F_p \in \on{ker} \left( L + \lambda_p \on{Id} \right)$, then
\begin{multline*}
d_2(F,Z)
  \leq
\frac{1+\sqrt{3}}{2 \sqrt{3}}
       \sqrt{
       \E \left( \left\lVert F \right\rVert^4 \right)
       \left(
\E \left( \left\lVert F \right\rVert^4 \right)
     -
       \E \left( \left\lVert F \right\rVert^2 \right)^2
       -
     2
       \left\lVert S_{p} \right\rVert_{\on{HS}}^2
       \right)}
 \\
      +
      \frac{1}{2}
  \left\lVert S - S_p \right\rVert_{\on{HS}}.
    \end{multline*}
\end{enumerate}
\end{theorem}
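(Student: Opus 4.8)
The plan is to obtain both bounds as immediate consequences of the abstract carré du champ estimate of Theorem~\ref{thm:1} and the fourth moment estimate of Theorem~\ref{thm:14} (respectively its single-chaos sharpening, Corollary~\ref{thm:17}), glued together by a single triangle inequality. The essential bookkeeping point is that the two ingredients are centered at different operators: Theorem~\ref{thm:1} controls $d_2(F,Z)$ through the distance of $\Gamma(F,-L^{-1}F)$ to the covariance operator $S$ of $Z$, whereas Theorem~\ref{thm:14} estimates the distance of $\Gamma(F,-L^{-1}F)$ to the covariance operator $T=\sum_{p=1}^{\infty}S_p$ of $F$ itself. These need not agree, and the discrepancy is exactly the deterministic operator $T-S$.

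First I would note that if the right-hand side of either bound is infinite there is nothing to prove, so I may assume $F\in\mathbb{D}$ (automatic in part~(ii), where $F=F_p$ lies in $\on{dom}(L)\subseteq\mathbb{D}$ by Proposition~\ref{prop:2}). Theorem~\ref{thm:1} applied to $Z$ then gives
\begin{equation*}
  d_2(F,Z) \leq \frac{1}{2} \left\lVert \Gamma(F,-L^{-1}F) - S \right\rVert_{L^2(\Omega;\on{HS})}.
\end{equation*}
Inserting $T$ and invoking the triangle inequality for the $L^2(\Omega;\on{HS})$ norm yields
\begin{equation*}
  \left\lVert \Gamma(F,-L^{-1}F) - S \right\rVert_{L^2(\Omega;\on{HS})}
  \leq
  \left\lVert \Gamma(F,-L^{-1}F) - T \right\rVert_{L^2(\Omega;\on{HS})}
  +
  \left\lVert T - S \right\rVert_{L^2(\Omega;\on{HS})},
\end{equation*}
and since $T-S$ is non-random, its $L^2(\Omega;\on{HS})$ norm collapses to $\left\lVert T-S\right\rVert_{\on{HS}}$, which is the additive covariance-correction term appearing in both parts of the conclusion.

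For part~(i), I would bound the first summand directly by Theorem~\ref{thm:14}, giving $\left\lVert \Gamma(F,-L^{-1}F) - T \right\rVert_{L^2(\Omega;\on{HS})}\leq\sqrt{M(F)+C(F)}$ under the chaotic hypothesis and the covariance condition~\eqref{eq:18}, which produces~\eqref{eq:66}. For part~(ii), where $F=F_p$ is a single chaotic eigenfunction so that $T=S_p$, I would instead feed in the sharper single-chaos estimate of Corollary~\ref{thm:17}, bounding $\left\lVert \Gamma(F,-L^{-1}F) - S_p \right\rVert_{L^2(\Omega;\on{HS})}$ by the stated $\tfrac{1+\sqrt{3}}{\sqrt{3}}$-multiple of the square-root factor, and then add $\tfrac{1}{2}\left\lVert S-S_p\right\rVert_{\on{HS}}$. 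I do not expect a genuine obstacle: the whole argument is one triangle inequality layered on two previously established estimates, and the only point demanding care is keeping the covariance of $Z$ separate from that of $F$ and recognizing that the resulting cross term is deterministic.
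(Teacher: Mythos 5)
Your argument is exactly the one the paper intends: Theorem~\ref{thm:6} is stated immediately after the sentence ``Combining Theorems~\ref{thm:14} and~\ref{thm:1}, the following moment bound is obtained,'' and your triangle inequality through $T$ (with the deterministic correction $\left\lVert S-T\right\rVert_{\on{HS}}$), together with Corollary~\ref{thm:17} in the single-chaos case, is precisely that combination. The proof is correct and matches the paper's approach.
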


\begin{remark}
 \label{rmk:5}
 In applications, given $F=\sum_{p=1}^{\infty} F_p$, it might sometimes be favorable to apply Theorem~\ref{thm:6} to the truncated series $G_N = \sum_{p=1}^N F_p$ via the simple estimate
\begin{equation*}
  d_2(F,Z) \leq d_2(G_N,Z) + d_2(G_N,F),
\end{equation*}
so that the expressions $M(G_N)$ and $C(G_N)$ are no longer infinite series but finite sums. To handle the additional term $d_2(G_{N},F)$, one then needs control on the tails $\E \left( \left\lVert F-G_N \right\rVert \right)$, for example via
\begin{equation*}
  \E \left( \left\lVert F-G_N \right\rVert \right)^2
  \leq \E \left( \left\lVert F-G_{N} \right\rVert^2 \right)
  = \sum_{p=N+1}^{\infty} \on{tr} S_p.
\end{equation*}
\end{remark}

Of course, in the setting of Theorem~\ref{thm:6}, if $K$ is assumed to
have finite dimension $d$, then the right hand side of~\eqref{eq:66}
also bounds the Wasserstein distance $d_W(F,Z)$ (with constant $1/2$ replaced by $C_{s,d}$ of Theorem~\ref{thm:1}). Let us now state two central limit theorems which are direct consequences of Theorem~\ref{thm:6}. The first one is an abstract Fourth Moment Theorem.

\begin{theorem}[Abstract fourth moment theorem]
  \label{thm:11}
  Let $Z$ be a centered, non-degenerate Gaussian random variable on
  $K$ and $\left\{ F_n \colon n \in \N \right\}$ be a sequence of $K$-valued chaotic
  eigenfunctions such that $\E \left( \left\lVert F_n \right\rVert^2 \right) \to \E \left( \left\lVert Z \right\rVert^2 \right)$. Consider the following two asymptotic relations, as $n \to \infty$:
  \begin{enumerate}[(i)]
  \item $F_n$ converges in distribution to $Z$;
\item\label{item:6} $\E \left( \left\lVert F_n \right\rVert^4 \right)\to \E \left( \left\lVert Z \right\rVert^4 \right)$.
\end{enumerate}
Then, $(ii)$ implies $(i)$, and the converse implication holds
whenever the moment sequence $\left\{ \norm{F_n}^4 \colon n\geq
  1\right\}$ is uniformly integrable.
\end{theorem}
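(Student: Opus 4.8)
The plan is to treat the two implications separately: the forward direction $(ii)\Rightarrow(i)$ follows from the quantitative estimate of Theorem~\ref{thm:6}(ii) together with the fact that $d_2$ metrizes convergence in distribution (Lemma~\ref{lem:8}), while the converse $(i)\Rightarrow(ii)$ comes from a continuous mapping argument combined with uniform integrability.

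For $(ii)\Rightarrow(i)$, since each $F_n$ is a chaotic eigenfunction I would apply Theorem~\ref{thm:6}(ii) with $F_n$ (covariance operator $S_n$) and the Gaussian $Z$ (covariance $S$), obtaining
\begin{equation*}
  d_2(F_n,Z)
  \leq
  \frac{1+\sqrt{3}}{2\sqrt{3}}
  \sqrt{
    \E\left(\norm{F_n}^4\right)
    \left(
      \E\left(\norm{F_n}^4\right)
      - \E\left(\norm{F_n}^2\right)^2
      - 2\norm{S_n}_{\on{HS}}^2
    \right)
  }
  + \frac{1}{2}\norm{S-S_n}_{\on{HS}}.
\end{equation*}
By identity~\eqref{eq:22} applied to a Gaussian $Z_n$ with covariance $S_n$, the quantity inside the inner parentheses is the non-negative fourth-moment defect $\E\left(\norm{F_n}^4\right)-\E\left(\norm{Z_n}^4\right)$. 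I would then argue that both summands vanish. The prefactor $\E\left(\norm{F_n}^4\right)$ is bounded, as it converges to $\E\left(\norm{Z}^4\right)$; once the covariance operators are known to converge, $\on{tr}S_n\to\on{tr}S$ and $\norm{S_n}_{\on{HS}}\to\norm{S}_{\on{HS}}$ yield $\E\left(\norm{Z_n}^4\right)\to\E\left(\norm{Z}^4\right)$ through~\eqref{eq:22}, so that the defect converges to $\E\left(\norm{Z}^4\right)-\E\left(\norm{Z}^4\right)=0$ by assumption $(ii)$. Hence $d_2(F_n,Z)\to0$ and Lemma~\ref{lem:8} gives $(i)$.

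The main obstacle is exactly the covariance term $\tfrac{1}{2}\norm{S-S_n}_{\on{HS}}$: controlling it requires convergence of the \emph{full} covariance operators $S_n\to S$, which is not implied by the scalar normalization $\E\left(\norm{F_n}^2\right)=\on{tr}S_n\to\on{tr}S=\E\left(\norm{Z}^2\right)$ alone. Indeed, already for Gaussian $F_n$ whose covariances are rotations of $S$ sharing its eigenvalues, both the second- and the fourth-moment conditions hold while the laws fail to converge. I would therefore make the convergence of the covariance operators explicit as an input (it holds trivially, with $\norm{S-S_n}_{\on{HS}}=0$, in the natural case where all $F_n$ share the covariance operator $S$ of $Z$), and present the rest as the routine estimate above.

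For the converse $(i)\Rightarrow(ii)$, I would use that $F_n\to Z$ in distribution on $K$ and that $x\mapsto\norm{x}^4$ is continuous on $K$, so that by the continuous mapping theorem the real random variables $\norm{F_n}^4$ converge in distribution to $\norm{Z}^4$. Since $Z$ is Gaussian, $\E\left(\norm{Z}^4\right)<\infty$, and the assumed uniform integrability of $\{\norm{F_n}^4\colon n\geq1\}$ upgrades this weak convergence of nonnegative random variables to convergence of expectations, i.e.\ $\E\left(\norm{F_n}^4\right)\to\E\left(\norm{Z}^4\right)$, which is $(ii)$. This direction presents no real difficulty beyond invoking the standard fact that convergence in distribution together with uniform integrability implies convergence of first moments.
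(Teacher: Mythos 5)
Your argument follows the paper's own route: the forward implication is obtained by specializing the quantitative bound of Theorem~\ref{thm:6}(ii) to the chaotic eigenfunctions $F_n$ and letting the fourth-moment defect vanish via~\eqref{eq:22}, and the converse is the standard continuous-mapping-plus-uniform-integrability argument, which is exactly what the paper invokes.

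The caveat you raise about the covariance term is not a defect of your write-up but a correct observation about the result as stated. The paper's proof records only that $\on{tr}(S_n-S)\to 0$ and then declares the implication a ``direct consequence'' of Theorem~\ref{thm:6}, whose bound contains the term $\tfrac{1}{2}\norm{S-S_n}_{\on{HS}}$; trace convergence does not control this term. Your rotation example is valid: taking $F_n$ Gaussian (a first-order chaos, hence a chaotic eigenfunction) with covariance $U_nSU_n^{\ast}$ for non-convergent unitaries $U_n$ preserves both $\on{tr}S_n$ and $\norm{S_n}_{\on{HS}}$, so $\E\left(\norm{F_n}^2\right)=\E\left(\norm{Z}^2\right)$ and, by~\eqref{eq:22}, $\E\left(\norm{F_n}^4\right)=\E\left(\norm{Z}^4\right)$, while the laws of $F_n$ do not converge to that of $Z$. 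Your proposed repair --- requiring $\norm{S_n-S}_{\on{HS}}\to 0$, which is automatic in the intended case where all $F_n$ share the covariance operator of $Z$ --- is precisely the hypothesis under which the paper's one-line argument becomes complete, and it matches the covariance-convergence assumption familiar from the finite-dimensional Peccati--Tudor setting.
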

\begin{proof}
  Denote the covariance operators of $Z$ and the $F_n$ by $S$ and $S_n$, respectively. Then by assumption $\on{tr}(S_n-S) \to 0$.
The fact that $(ii)$ implies $(i)$ is a direct consequence of Theorem
\ref{thm:6}. The converse
implication follows immediately if the additional uniform
integrability condition is assumed to hold.
\end{proof}

\begin{remark}
  \begin{enumerate}[(i)]
  \item As is well known, a sufficient condition for uniform integrability
of the sequence $\left\{ \norm{F_n}^4 \colon n\geq
  1\right\}$ is given by $\sup_{n \geq 1}
\Ex{\norm{F_n}^{4+\varepsilon}} <\infty$ for some $\varepsilon >0$.
  \item Theorem~\ref{thm:11} is a Hilbert-valued generalization of the Gaussian Fourth Moment Theorems derived in~\cite{azmoodeh-campese-poly:2014:fourth-moment-theorems} ($K=\mathbb{R}$) and~\cite{campese-nourdin-peccati-ea:2016:multivariate-gaussian-approximations} ($K=\mathbb{R}^d$ with Euclidean inner product). As further special cases, taking $L$ to be the Ornstein-Uhlenbeck generator on $L^2(\Omega,\R)$, the classical Fourth Moment Theorem of~\cite{nualart-peccati:2005:central-limit-theorems} ($K=\mathbb{R}$) and Theorem 4.2  of~\cite{nourdin-rosinski:2014:asymptotic-independence-multiple} ($K=\R^d$ with Euclidean inner product) are included. Further details on these latter two cases will be provided in Section~\ref{sec:fourth-moment-contr}.
\end{enumerate}
\end{remark}

For functionals with infinite chaos expansions, the corresponding limit theorem reads as follows. Again, the proof is a straightforward application of Theorem~\ref{thm:6}.

\begin{theorem}
  \label{thm:12}
Let $Z$ be a centered, non-degenerate Gaussian random variable on $K$ with covariance
  operator $S$ and let $\left\{ F_n \colon n\in \N\right\}$ be a sequence of square integrable, $K$-valued
random variables with chaos decomposition
\begin{equation*}
F_n = \sum_{p=1}^{\infty}F_{p,n},
\end{equation*}
where, for each $n,p \geq 1$, $F_{p,n}$ is a chaotic eigenfunction associated
to the eigenvalue $-\lambda_{p}$ (of the operator $-L$) and verifying the covariance condition~\eqref{eq:18}. For $n,p \in \N$, let $S_n$ and $S_{p,n}$ be the covariance operators of $F_n$ and $F_{p,n}$, respectively. Suppose that:
\begin{enumerate}[(i)]
\item \label{item:8} There exists a sequence $\left\{ S_p \colon p\in \N\right\}$ of covariance operators such that $S=\sum_{p=1}^{\infty} S_p$ is of trace class, $\on{tr}(S_{p,n} - S_p) \to 0$ as $n \to \infty$
and
\begin{equation}
  \label{eq:14}
  \lim_{N \to \infty}
  \sup_{n \in \N} \sum_{p=N}^{\infty} \on{tr}(S_{p,n}) = 0
\end{equation}
\item \label{item:7} For all $p,q \in \N$, it holds that
\begin{equation*}
  \E \left(
    \left\lVert F_{p,n} \right\rVert^4
  \right)
  -
  \E \left(
    \left\lVert F_{p,n} \right\rVert^2
  \right)^2
  -
  2 \left\lVert S_{{p,n}} \right\rVert_{\on{HS}}^2
  \to 0
\end{equation*}
and, if $p \neq q$,
\begin{equation*}
  \E \left( \left\lVert F_{p,n} \right\rVert^2 \left\lVert F_{q,n} \right\rVert^2 \right)
  -
  \E \left( \left\lVert F_{p,n} \right\rVert^2 \right)
  \E \left(
    \left\lVert F_{q,n} \right\rVert^2
  \right)
  \to 0
\end{equation*}
as $n \to \infty$.
\end{enumerate}
Then $F_n$ converges in distribution to $Z$ as $n \to \infty$.
\end{theorem}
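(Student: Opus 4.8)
The plan is to follow the truncation strategy described in Remark~\ref{rmk:5}, using the uniform tail bound~\eqref{eq:14} to play the role normally taken by a tightness argument. For $N \in \N$, set $G_{N,n} = \sum_{p=1}^N F_{p,n}$. This is a finite sum of chaotic eigenfunctions satisfying the covariance condition~\eqref{eq:18}, so it falls within the scope of Theorem~\ref{thm:6}(i). Denoting its covariance operator by $T_{N,n} = \sum_{p=1}^N S_{p,n}$, the triangle inequality gives
\begin{equation*}
  d_2(F_n,Z) \leq d_2(F_n,G_{N,n}) + d_2(G_{N,n},Z),
\end{equation*}
and I would estimate the two terms separately, keeping $N$ fixed while sending $n \to \infty$, and only afterwards letting $N \to \infty$.

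For the truncation error, every test function $h$ with $\left\lVert h \right\rVert_{C_b^2(K)} \leq 1$ is in particular $1$-Lipschitz, so, exactly as in Remark~\ref{rmk:5} and using the orthogonality of distinct chaos components,
\begin{equation*}
  d_2(F_n,G_{N,n}) \leq \sqrt{\E \left( \left\lVert F_n - G_{N,n} \right\rVert^2 \right)} = \sqrt{\sum_{p=N+1}^\infty \on{tr} S_{p,n}} \leq \sqrt{\sup_{n \in \N} \sum_{p=N+1}^\infty \on{tr} S_{p,n}}.
\end{equation*}
For the second term, Theorem~\ref{thm:6}(i) applies to the finite chaos expansion $G_{N,n}$ and yields
\begin{equation*}
  d_2(G_{N,n},Z) \leq \frac{1}{2} \left( \sqrt{M(G_{N,n}) + C(G_{N,n})} + \left\lVert S - T_{N,n} \right\rVert_{\on{HS}} \right),
\end{equation*}
where now $M(G_{N,n})$ and $C(G_{N,n})$, defined by~\eqref{eq:73} and~\eqref{eq:74}, are \emph{finite} sums over $p,q \leq N$.

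The core of the argument is the behaviour of this second bound as $n \to \infty$ for fixed $N$. The second hypothesis of Theorem~\ref{thm:12} forces each summand of $M(G_{N,n})$ and $C(G_{N,n})$ to vanish in the limit: the factor depending on $F_{q,n}$ under each square root in~\eqref{eq:73} tends to $0$, while the prefactor $\E ( \left\lVert F_{p,n} \right\rVert^4 )$ remains bounded, since the first hypothesis together with~\eqref{eq:22} shows that it converges to $\E ( \left\lVert Z_p \right\rVert^4 ) = (\on{tr} S_p)^2 + 2 \left\lVert S_p \right\rVert_{\on{HS}}^2$; likewise each covariance in~\eqref{eq:74} tends to $0$. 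Being finite sums, $M(G_{N,n}) + C(G_{N,n}) \to 0$. For the covariance term I would split
\begin{equation*}
  S - T_{N,n} = \sum_{p=1}^N (S_p - S_{p,n}) + \sum_{p=N+1}^\infty S_p,
\end{equation*}
and use the first hypothesis to get $\left\lVert S_p - S_{p,n} \right\rVert_{\on{HS}} \to 0$ for each fixed $p$, whence
\begin{equation*}
  \limsup_{n \to \infty} \left\lVert S - T_{N,n} \right\rVert_{\on{HS}} \leq \Big\lVert \sum_{p=N+1}^\infty S_p \Big\rVert_{\on{HS}} \leq \sum_{p=N+1}^\infty \on{tr} S_p.
\end{equation*}
Combining the three estimates gives
\begin{equation*}
  \limsup_{n \to \infty} d_2(F_n,Z) \leq \sqrt{\sup_{n \in \N} \sum_{p=N+1}^\infty \on{tr} S_{p,n}} + \frac{1}{2} \sum_{p=N+1}^\infty \on{tr} S_p,
\end{equation*}
and letting $N \to \infty$ makes the right-hand side vanish by~\eqref{eq:14} and the trace-class property of $S$. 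Thus $d_2(F_n,Z) \to 0$, and Lemma~\ref{lem:8} upgrades this to convergence in distribution.

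I expect the main difficulty to be the correct handling of this double limit: the cut-off $N$ has to be selected from the uniform-in-$n$ tail bound~\eqref{eq:14} before $n$ is sent to infinity, as it is precisely this uniformity that replaces the usual tightness step and allows the finitely many chaos-by-chaos limits of the hypotheses to take effect. A secondary technical point is to extract genuine Hilbert--Schmidt convergence $\left\lVert S_p - S_{p,n} \right\rVert_{\on{HS}} \to 0$ and the uniform boundedness of the fourth moments $\E ( \left\lVert F_{p,n} \right\rVert^4 )$ from the first two hypotheses, both of which are needed to push $\left\lVert S - T_{N,n} \right\rVert_{\on{HS}}$ and $M(G_{N,n})$ to their limits.
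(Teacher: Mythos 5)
Your proof is correct and follows essentially the same route as the paper: truncate at level $N$, use the uniform tail bound~\eqref{eq:14} to control the remainder, apply Theorem~\ref{thm:6}(i) to the truncated sum for fixed $N$ as $n \to \infty$, and only then let $N \to \infty$. The only cosmetic difference is that the paper inserts an intermediate Gaussian $Z_N$ with covariance $\sum_{p \leq N} S_p$ and uses a three-term triangle inequality, whereas you let the term $\lVert S - T_{N,n} \rVert_{\on{HS}}$ from Theorem~\ref{thm:6} absorb the tail $\sum_{p > N} S_p$; both variants rest on the same convergence $\lVert S_{p,n} - S_p \rVert_{\on{HS}} \to 0$ (left implicit in the paper) that you rightly flag as needing to be read into hypothesis~(i).
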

\begin{proof}
  For $N \in \N$, define $F_{n,N} = \sum_{p=1}^N F_{p,n}$, $R_{n,N} = F_n-F_{n,N} = \sum_{p=N+1}^{\infty} F_{p,n}$ and let $Z_N$ be a centered Gaussian random variable on $K$ with covariance operator $\sum_{p=1}^N S_p$. Now let $\varepsilon > 0$ and note that
  \begin{equation}
    \label{eq:35}
  d_2(F_n,Z) \leq
  d_2(F_n,F_{n,N})
  +
  d_2(F_{n,N},Z_N)
  +
  d_2(Z_N,Z).
\end{equation}
For $h \in C_b^2(K)$, we get by Lipschitz-continuity that
\begin{equation*}
 d_2(F_n,F_{n,N}) \leq \E \left( \left\lVert R_{n,N} \right\rVert \right)
 \leq \sqrt{
   \E \left( \left( \left\lVert R_{n,N} \right\rVert \right)^2 \right)
 }
 \leq
 \sqrt{
   \sum_{p=N+1}^{\infty} \on{tr}(S_{p,n})
 }.
\end{equation*}
Similarly,
\begin{equation*}
  d_2(Z_N,Z) \leq \sqrt{
    \sum_{p=N+1}^{\infty} \on{tr}(S_p)
    }
\end{equation*}
The above two calculations, together with assumption~\emph{(\ref{item:8})}, yield the existence of $N \in \mathbb{N}$, not dependent of $n$, such that
\begin{equation}
 \label{eq:33}
  d_2(F_{n},F_{n,N}) +  d_2(Z_N,Z) < \varepsilon.
\end{equation}
By assumption~\emph{(\ref{item:7})} and Theorem~\ref{thm:6}, we also have that
\begin{equation*}
  d_2(F_{n,N},Z_{N}) \to 0
\end{equation*}
as $n \to \infty$, so that in view of~\eqref{eq:35},
\begin{equation*}
  0 \leq \lim_{n \to \infty} d(F_n,Z) < \varepsilon.
\end{equation*}
The assertion follows as $\varepsilon$ was arbitrary.
\end{proof}
Although we stated Theorems~\ref{thm:11} and~\ref{thm:12} in a qualitative way, it should be clear that the convergences in both results are actually quantified by Theorem~\ref{thm:6}.

\section{Hilbert-valued Wiener structures}
\label{sec:hilb-valu-wien}

In this section, we apply our general results to the special Dirichlet structure induced by the Ornstein-Uhlenbeck generator. This leads to Hilbert-valued Wiener chaos and a carré du champ operator given by a gradient of Hilbert-valued Malliavin derivatives. The eigenfunctions are multiple Wiener-Itô integrals with Hilbert-valued deterministic kernels.
This additional structure allows to express the moment bounds of the previous sections in terms of kernel contractions, which in the finite-dimensional case have already proved themselves to be very useful in applications, due to their comparatively easy computability when compared to moments.

\subsection{Malliavin calculus}
\label{sec:malliavin-calculus}

We present some basic elements of the Malliavin calculus on Hilbert
spaces, as introduced
in~\cite{leon-nualart:1998:stochastic-evolution-equations} (based on
the earlier
work~\cite{grorud-pardoux:1992:integrales-hilbertiennes-anticipantes}). Expository
references are~\cite[Chapter
4]{kruse:2014:strong-weak-approximation},~\cite[Chapter 5]{carmona-tehranchi:2006:interest-rate-models}.
The authoritative reference for Malliavin calculus on $L^2(\Omega;\R)$
is~\cite{nualart:2006:malliavin-calculus-related}, a further excellent
exposition can be found in~\cite[Chapters 1 and 2]{nourdin-peccati:2012:normal-approximations-malliavin}.

\subsubsection{The Malliavin derivative and divergence operators}

Let $\left\{ W(h) \colon h \in \mathfrak{H} \right\}$ be an isonormal Gaussian
process with underlying separable Hilbert space $\mathfrak{H}$, that is $\left\{ W(h)
  \colon h \in \mathfrak{H} \right\}$ is a centered family of Gaussian random
variables, defined on a complete probability space $(\Omega,\mathcal{F},P)$, satisfying
\begin{equation*}
\Ex{W(h_1)W(h_2)} = \left\langle h_1, h_2 \right\rangle_{\mathfrak{H}}, \qquad
h_1, h_2 \in \mathfrak{H}.
\end{equation*}
We assume that the $\sigma$-algebra $\mathcal{F}$ is generated by $W$.
Let $K$ be another separable Hilbert space and denote by
$\mathcal{S}\otimes K$ the class of smooth $K$-valued random variables
$F \colon \Omega \to K$ of the form $F = f \left( W(h_1), \ldots, W(h_n) \right) \otimes v$,
where $f \in C_b^{\infty} \left( \R^n \right)$, $h_1, \ldots, h_n \in
\mathfrak{H}$, $v \in K$, and linear combinations thereof. $\mathcal{S}\otimes K$
is dense in $L^2(\Omega;K)$ and for $F \in
\mathcal{S}\otimes K$, define the Malliavin derivative $DF$ of $F$ as
the $\mathfrak{H} \otimes K$-valued random variable  given by
\begin{equation}
\label{eq:1}
DF = \sum_{i=1}^n\partial_i f \left( W(h_1), \ldots, W(h_n) \right)h_i
\otimes v.
\end{equation}
It can be shown that
$D$ is a closable operator from $L^2 \left( \Omega;K \right)$ into
$L^2 \left( \Omega; \mathfrak{H} \otimes K \right)$, and from now we continue to use the symbol $D$ to denote the closure. The domain of $D$, denoted by $\mathbb{D}^{1,2}(K)$, is the closure of $\mathcal{S}\otimes K$
with respect to the Sobolev norm $\norm{F}_{\mathbb{D}^{1,2}(K)}^2 = \norm{F}_{L^2(\Omega;K)}^2 +
\norm{DF}_{L^2(\Omega;\mathfrak{H} \otimes K)}^2$. Similarly, for $k \geq 2$, let $\mathbb{D}^{k,2}(K)$ denote the
closure of $\mathcal{S}\otimes K$ with respect to the Sobolev norm $\norm{F}_{\mathbb{D}^{k,2}(K)}^2 = \norm{F}_{L^2(\Omega;K)}^2 +
\sum_{i=1}^k\norm{D^iF}_{L^2(\Omega;\mathfrak{H}^{\otimes i}\otimes
  K)}^2$. For any $k \geq 2$, the operator $D^k$ can be interpreted as the
iteration of the Malliavin derivative operator defined in
\eqref{eq:1}. As $D$ is a closed linear operator from $\mathbb{D}^{1,2}(K)$ to
$L^2(\Omega;\mathfrak{H}\otimes K)$, it has an adjoint operator, denoted by
$\delta$, which maps a subspace of $L^2(\Omega;\mathfrak{H}\otimes K)$ into
$L^2(\Omega;K)$ through the duality relation
\begin{equation*}
\Ex{\left\langle DF,\eta \right\rangle_{\mathfrak{H}\otimes K}} =
\Ex{\left\langle F, \delta(\eta) \right\rangle_K},
\end{equation*}
for any $F \in \mathbb{D}^{1,2}(K)$ and $\eta \in \on{dom}(\delta)$. The domain of $\delta$, denoted by $\on{dom}(\delta)$, is the
subset of random variables $\eta \in L^2(\Omega;\mathfrak{H} \otimes K)$ such
that $\abs{\Ex{\left\langle DF,\eta \right\rangle_{\mathfrak{H} \otimes K}}} \leq
C_{\eta}\norm{F}_{L^2(\Omega;K)}$, for all $F \in \mathbb{D}^{1,2}(K)$, where $C_{\eta}$ is a positive
constant depending only on $\eta$. Since $D$ is a form of gradient,
its adjoint $\delta$ should be interpreted as a divergence, so that it
is referred to as the divergence operator. Similarly, for any $k \geq
2$, we denote by
$\delta^k$ the adjoint of $D^k$ as an operator from
$L^2(\Omega;\mathfrak{H}^{\otimes k}\otimes K)$ to $L^2(\Omega; K)$
with domain $\on{dom}(\delta^k)$.

\subsubsection{Multiple integrals and chaos decomposition}
Any $K$-valued random variable $F \in L^2(\Omega;K)$ can be decomposed
as
\begin{equation}
  \label{eq:2}
F = \sum_{n=0}^{\infty}\delta^n(f_n),
\end{equation}
where the kernel $f_n \in \mathfrak{H}^{\odot n}\otimes K$ are uniquely
determined by $F$, where $\mathfrak{H}^{\odot n}$ denotes the $n$-fold symmetrized
tensor product of $\mathfrak{H}$. The representation \eqref{eq:2} is called the
chaos decomposition of $F$, and for each $n \geq 0$, $\delta^n(f_n)$
is an element of the closure of $\mathfrak{H}_n \otimes K$ with respect
to the norm on $L^2(\Omega;K)$, where the so-called $n$-th Wiener
chaos $\mathfrak{H}_n$ is defined to be closed linear
subspace of $L^2(\Omega)$ generated by the random variables $\left\{
  H_n(W(h))\colon h \in \mathfrak{H},\ \norm{h}_{\mathfrak{H}}=1 \right\}$, where $H_n$
is the $n$-th Hermite polynomial given by $H_n(x) =  (-1)^{n} \me^{x^2/2} \left( \frac{d}{dx} \right)^n
\me^{-x^2/2}$ (recall that $\mathfrak{H}_0$ is identified with $\R$). For any $n \geq 0$, the $K$-valued random
variable $\delta^n(f_n)$ is usually denoted by $I_n(f_n)$ and called the ($K$-valued) multiple
Wiener integral of order $n$ of $f_n$. In the particular case where $K=\R$, these integrals coincide with the ones defined in~\cite{nualart:2006:malliavin-calculus-related}. Denote by $J_n$ the linear
operator on $L^2 (\Omega)$ given by the orthogonal projection onto
$\mathfrak{H}_n$, and by $J_n^K$ the
extension of $J_n \otimes \on{Id}_K$ to $L^2(\Omega;K)$. Then, it holds that
$J_n^K F = I_p(f_n)$. Let $\left\{ e_k \colon k \geq 0 \right\}$ be an
orthonormal basis of $\mathfrak{H}$. Given $f \in \mathfrak{H}^{\odot n}$ and $g
\in \mathfrak{H}^{\odot m}$, for every $r = 0,\ldots, n \wedge m$, the
$r$-th contraction of $f$ and $g$ is the element of $\mathfrak{H}^{\otimes
  (n+m-2r)}$ defined as
\begin{equation}
  \label{eq:55}
f \otimes_r g = \sum_{i_1, \ldots, i_r =0}^{\infty} \left\langle
  f,e_{i_1}\otimes \cdots \otimes e_{i_r} \right\rangle_{\mathfrak{H}^{\otimes r}}
\otimes \left\langle
  g,e_{i_1}\otimes \cdots \otimes e_{i_r} \right\rangle_{\mathfrak{H}^{\otimes r}}.
\end{equation}
We denote by $f \widetilde{\otimes}_r g$ the symmetrization (average
over all permutations of the arguments) of $f \otimes_r g$.
Given an orthonormal basis $\left\{ v_k \colon k \geq 0 \right\}$ of $K$,
the following multiplication formula is satisfied by $K$-valued multiple Wiener
integrals: for two arbitrary basis elements $v_i, v_j$ of $K$ and for
$f \in \mathfrak{H}^{\odot n} \otimes K$ and $g \in \mathfrak{H}^{\odot m} \otimes K$,
define $f_i = \left\langle f,v_i \right\rangle_K$ and $g_j =
\left\langle g,v_j \right\rangle_K$. Then
\begin{equation}
  \label{multiplicationformulamultint}
 I_n(f_i) I_m(g_j) = \sum_{r=0}^{n \wedge m} r! \binom{n}{r}\binom{m}{r}
  I_{n+m-2r}(f_i \widetilde{\otimes}_{r} g_j).
\end{equation}
Finally, the action of the Malliavin derivative operator on a $K$-valued
multiple Wiener integral of the form $I_n(f) \in L^2(\Omega;K)$, where $f \in \mathfrak{H}^{\odot
  n}\otimes K$, is given by $DI_n(f)= n I_{n-1}(f(\cdot)) \in L^2(\Omega;\mathfrak{H}\otimes K)$.

\subsection{Fourth moment and contraction bounds}
\label{sec:fourth-moment-contr}

In this section, we are going to apply our abstract results to the Dirichlet structure given by the Ornstein-Uhlenbeck generator, acting on $L^2(\Omega;K)$, where $K$ is a real, separable Hilbert space and the $\sigma$-algebra of the underlying probability space is generated by an isonormal Gaussian process $W$, indexed by a real, separable Hilbert space $\mathfrak{H}$. The Ornstein-Uhlenbeck generator, commonly denoted by $-L$ in this context, is then defined as $-L=\delta D$. Its spectrum is given by the non-negative integers and the eigenspace asociated to the eigenvalue $p  \in \mathbb{N}_0$ consists of $K$-valued multiple Wiener-Itô integrals of order $p$. The product formula~\eqref{multiplicationformulamultint} furthermore shows that each of these eigenfunctions is chaotic in the sense of Definition~\ref{def:2}. The carré du champ operator is given by $\Gamma(F,G)=\left\langle DF,DG \right\rangle_{\mathfrak{H}}$, where $\mathfrak{H}$ denotes the underlying Hilbert space on which the isonormal Gaussian process is defined. The operator $\Gamma(F,G)$ thus acts on $K$ via
\begin{equation*}
  \Gamma(F,G)u = \left\langle \left\langle DF,u \right\rangle_K,DG \right\rangle_{\mathfrak{H}}.
\end{equation*}

Using this concrete structure, our bounds can be expressed in terms of
kernel contractions. In applications, such contractions have proven to
be very useful, as they are typically easier to evaluate than moments
(see, among many others, \cite{nourdin-peccati:2009:steins-method-wiener,
  nourdin-peccati-podolskij:2011:quantitative-breuer-major-theorems}
for the context of Breuer-Major theorems, for instance).

Throughout the rest of this section, we assume a Dirichlet structure as introduced in the above paragraph as given.

We start by proving that the covariance condition~\eqref{eq:18} always holds in the present context.

\begin{lemma}
  \label{lem:2}
  For $p \in \N$ and $f \in \mathfrak{H}^{\odot p}\otimes K$, let $F=I_p(f)$ be a multiple integral with values in $K$. Then $F$ satisfies the covariance condition~\eqref{eq:18}.
\end{lemma}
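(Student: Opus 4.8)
The plan is to reduce the $K$-valued statement to a pair of scalar multiple integrals and then to prove the resulting inequality by computing one fourth moment in a way that keeps every contribution manifestly nonnegative. First I would fix $u,v\in K$ (orthonormality is not actually needed) and set $f_u=\langle f,u\rangle_K$, $f_v=\langle f,v\rangle_K\in\mathfrak H^{\odot p}$. Since $\langle\cdot,u\rangle_K$ commutes with the integral, $X:=\langle F,u\rangle=I_p(f_u)$ and $Y:=\langle F,v\rangle=I_p(f_v)$ are centered elements of the $p$-th Wiener chaos, and the isometry gives $\on{Cov}(X,Y)=\E(XY)=p!\,\langle f_u,f_v\rangle$ as well as $\E(X^2)=p!\,\norm{f_u}^2$ and $\E(Y^2)=p!\,\norm{f_v}^2$. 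Condition~\eqref{eq:18} thus reduces to the scalar inequality $2\,\E(XY)^2\le\on{Cov}(X^2,Y^2)$.

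The main step is to evaluate $\E(X^2Y^2)=\E\big((XY)^2\big)$ by \emph{first} expanding the product $XY$ via the multiplication formula~\eqref{multiplicationformulamultint}, namely $XY=\sum_{r=0}^p r!\binom pr^2 I_{2p-2r}(f_u\widetilde{\otimes}_{r} f_v)$, and then invoking orthogonality of the chaoses together with the isometry. This produces
\begin{equation*}
  \E(X^2Y^2)=\sum_{r=0}^p (r!)^2\binom pr^4 (2p-2r)!\,\norm{f_u\widetilde{\otimes}_{r} f_v}^2,
\end{equation*}
a sum of \emph{nonnegative} terms. Expanding $XY$ rather than $X^2$ and $Y^2$ separately is crucial here: the separate expansion leads instead to terms $\langle f_u\widetilde{\otimes}_{r} f_u,f_v\widetilde{\otimes}_{r} f_v\rangle$ whose signs are uncontrolled. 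The $r=p$ term equals $(p!)^2\langle f_u,f_v\rangle^2=\E(XY)^2$, so discarding the (nonnegative) middle terms already yields $\E(X^2Y^2)\ge (2p)!\,\norm{f_u\widetilde{\otimes} f_v}^2+(p!)^2\langle f_u,f_v\rangle^2$.

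It then remains to show $(2p)!\,\norm{f_u\widetilde{\otimes} f_v}^2\ge (p!)^2\big(\norm{f_u}^2\norm{f_v}^2+\langle f_u,f_v\rangle^2\big)$. For this I would use the standard identity $\norm{f_u\widetilde{\otimes} f_v}^2=\binom{2p}{p}^{-1}\sum_{r=0}^p\binom pr^2\langle f_u\otimes_r f_u,f_v\otimes_r f_v\rangle$ and retain only the extreme terms $r=0$ and $r=p$, which contribute $\langle f_u,f_v\rangle^2$ and $\norm{f_u}^2\norm{f_v}^2$, respectively. The discarded terms are nonnegative because each \emph{unsymmetrized} contraction satisfies $\langle f_u\otimes_r f_u,f_v\otimes_r f_v\rangle=\norm{T_{f_u}^{\ast}T_{f_v}}_{\on{HS}}^2\ge0$, where $T_{f_u},T_{f_v}$ denote the Hilbert--Schmidt operators obtained from $f_u,f_v$ under the splitting $\mathfrak H^{\otimes p}=\mathfrak H^{\otimes r}\otimes\mathfrak H^{\otimes(p-r)}$ (so that $f\otimes_r f$ corresponds to the positive operator $T_fT_f^{\ast}$ and the inner product becomes $\on{tr}(T_{f_u}T_{f_u}^{\ast}T_{f_v}T_{f_v}^{\ast})$). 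Combining the last two estimates gives $\E(X^2Y^2)\ge\E(X^2)\E(Y^2)+2\,\E(XY)^2$, which is exactly~\eqref{eq:18}.

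The hard part is precisely this sign bookkeeping: individual symmetrized contractions $\langle f_u\widetilde{\otimes}_{r} f_u,f_v\widetilde{\otimes}_{r} f_v\rangle$ can be strictly negative already for $p=2$, so a naive termwise estimate of $\on{Cov}(X^2,Y^2)$ fails. Routing the computation through $\E\big((XY)^2\big)$ and through the \emph{unsymmetrized} contractions is exactly what exposes the underlying Hilbert--Schmidt positivity and makes the lower bound go through.
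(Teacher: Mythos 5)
Your proof is correct and follows essentially the same route as the paper's: reduce to the scalar integrals $I_p(f_u), I_p(f_v)$, expand $\E\left((XY)^2\right)$ via the product formula applied to $XY$, and then decompose $(2p)!\left\lVert f_u\widetilde{\otimes}f_v\right\rVert^2$ into the two extreme terms plus nonnegative contraction terms. The only cosmetic difference is that you certify nonnegativity of the discarded terms via the operator identity $\langle f_u\otimes_r f_u,f_v\otimes_r f_v\rangle=\operatorname{tr}(T_{f_u}T_{f_u}^{\ast}T_{f_v}T_{f_v}^{\ast})\geq 0$, whereas the paper writes the same quantities (after $r\mapsto p-r$) as the manifestly nonnegative norms $\left\lVert f_u\otimes_r f_v\right\rVert^2$.
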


\begin{proof}
  Let $u,v \in K$ . For better legibility, we will write $I_p(f_u) =I_p (\left\langle f,u \right\rangle)$ and $I_p(f_{v}) = I_p(\left\langle f,v \right\rangle)$. By the product formula for multiple integrals, we get that
\begin{align*}
  \E \left( I_p(f_u)^2I_p(f_{v})^2 \right)
  &= \notag
  \sum_{r=0}^{p} a_{p,r}^2 (2p-2r)!  \left\langle f_u \widetilde{\otimes}_r f_v, f_u \widetilde{\otimes}_r f_v \right\rangle_{\mathfrak{H}^{\otimes (2p-2r)}}
  \\ &=
       (2p)! \left\lVert f_u \widetilde{\otimes}_{} f_v \right\rVert_{\mathfrak{H}^{\otimes 2p}}^2
       +
       (p!)^2 \left\langle f_u,f_v \right\rangle_{\mathfrak{H}^{\otimes p}}^2
       \\ & \qquad \qquad+ \notag
            \sum_{r=1}^{p-1} a_r^2 (2p-2r)!
            \left\lVert f_u \widetilde{\otimes}_r f_v \right\rVert_{\mathfrak{H}^{\otimes (2p-2r)}}^2,
\end{align*}
where $a_{p,r} = r! \binom{p}{r}^2$. A straightforward modification of the calculations given in~\cite[Pages 97-98]{nourdin-peccati:2012:normal-approximations-malliavin} yields
\begin{align*}
  (2p)! &\left\lVert f_u \widetilde{\otimes}_{ } f_v \right\rVert_{\mathfrak{H}^{\otimes 2p}}^2
  \\ &
  =
     p!^2
       \left\lVert f_u \right\rVert_{\mathfrak{H}^{\otimes p}}^2
       \left\lVert f_v \right\rVert_{\mathfrak{H}^{\otimes p}}^2
       +
       p!^2
       \left\langle f_u,f_v \right\rangle_{\mathfrak{H}^{\otimes p}}^2
       +
       p!^{2}
       \sum_{r=1}^{p-1}
       \binom{p}{r}^2
       \left\lVert f_u \otimes_r f_v \right\rVert_{\mathfrak{H}^{\otimes (2p-2r)}}^2
       \\ &
            =
            \E \left( I_p(f_u)^2 \right)
            \E \left( I_p(f_v)^2 \right)
            +
            \E \left( I_p(f_u) I_p(f_v) \right)^2
       +
       p!^{2}
       \sum_{r=1}^{p-1}
       \binom{p}{r}^2
       \left\lVert f_u \otimes_r f_v \right\rVert_{\mathfrak{H}^{\otimes (2p-2r)}}^2.
\end{align*}
Combining the last two calculations and rearranging terms gives
\begin{multline}
  \label{symcontandnonsym}
  \E \left( I_p(f_u)^2 I_p(f_v)^2 \right)
  -
  \E \left( I_p(f_u)^2 \right)
  \E \left( I_p(f_v)^2 \right)
  -
  2
  \E \left( I_p(f_u)I_p(f_v) \right)^2
\\ =
  \sum_{r=1}^{p-1}
  \left(
  a_r^2 (2p-2r)!
      \left\lVert f_u \widetilde{\otimes}_r f_v \right\rVert_{\mathfrak{H}^{\otimes (2p-2r)}}^2
      +
      p!^{2}
   \binom{p}{r}^2
       \left\lVert f_u \otimes_r f_v \right\rVert_{\mathfrak{H}^{\otimes (2p-2r)}}^2
        \right) >0,
      \end{multline}
      which is the desired inequality.
\end{proof}
The contraction bound is as follows.
\begin{theorem}
 \label{thm:15}
 For $F \in \mathbb{D}^{1,4}$ with covariance operator $S$ and chaos decomposition $F = \sum_{p=1}^{\infty} F_p$, where $F_p=I_p(f)$ and $f \in \mathfrak{H}^{\odot p} \otimes K$, one has
\begin{equation}
  \left\lVert \left\langle DF,-DL^{-1}F \right\rangle_{\mathfrak{H}} - S \right\rVert_{L^2(\Omega;\on{HS}(K))}
  \leq
\widetilde{M}(F)
    +
    \widetilde{C}(F),
\end{equation}
where
\begin{align*}
  \widetilde{M}(F) &=
                     \sum_{p=1}^{\infty}
  \sqrt{
       \sum_{r=1}^{p-1}
c_{p,p}(r)^2
\left\lVert f_{p} \otimes_r f_{p} \right\rVert_{\mathfrak{H}^{\otimes (2p-2r)} \otimes K^{\otimes 2}}^2},
  \\
  \widetilde{C}(F) &=
                     \sum_{\substack{1 \leq p,q \leq \infty\\p \neq q}}
\sqrt{
  \sum_{r=1}^{p \land q}
  c_{p,q}(r)^2
  \left\lVert f_{p} \otimes_r f_{q} \right\rVert_{\mathfrak{H}^{\otimes (p+q-2r)} \otimes K^{\otimes 2}}^2}.
\end{align*}
and the positive constants $c_{p,q}(r)$ are given by
\begin{equation}
  \label{eq:21}
  c_{p,q}(r) = p^2(r-1)! \binom{p-1}{r-1} \binom{q-1}{r-1} (p+q-2r)!.
\end{equation}
\end{theorem}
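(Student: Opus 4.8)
The plan is to follow the same strategy as in the proof of Theorem~\ref{thm:14}, exploiting that in the present Ornstein–Uhlenbeck setting the scalar carré du champ terms can be computed \emph{explicitly} via the product formula~\eqref{multiplicationformulamultint} rather than merely estimated. First I would apply the triangle inequality to the cross-covariance decomposition $S=\sum_{p,q}C_{p,q}$ (with $C_{p,q}=0$ for $p\neq q$ and $C_{p,p}=S_p$), reducing the left-hand side to $\sum_{p,q=1}^{\infty}\|\langle DF_p,-DL^{-1}F_q\rangle_{\mathfrak{H}}-C_{p,q}\|_{L^2(\Omega;\on{HS}(K))}$. Expanding in an orthonormal basis $\{e_i\}$ of $K$ and writing $f_{p,i}=\langle f_p,e_i\rangle_K\in\mathfrak{H}^{\odot p}$, each summand becomes $\sqrt{\sum_{i,j}\on{Var}(\langle DI_p(f_{p,i}),-DL^{-1}I_q(f_{q,j})\rangle_{\mathfrak{H}})}$, exactly as in~\eqref{eq:71}, so that everything is now phrased in terms of real-valued multiple integrals, where the finite-dimensional theory applies.

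The second step is the explicit computation of these scalar variances. Using $DI_n(h)=nI_{n-1}(h)$ and $-L^{-1}I_q(h)=\frac{1}{q}I_q(h)$, together with the product formula, I would derive
\[
\langle DI_p(f_{p,i}),-DL^{-1}I_q(f_{q,j})\rangle_{\mathfrak{H}} = p\sum_{r=1}^{p\wedge q}(r-1)!\binom{p-1}{r-1}\binom{q-1}{r-1}I_{p+q-2r}(f_{p,i}\widetilde{\otimes}_r f_{q,j}).
\]
When $p=q$, the term $r=p$ is the constant $p!\langle f_{p,i},f_{p,j}\rangle_{\mathfrak{H}^{\otimes p}}=\langle S_p e_i,e_j\rangle$, which is precisely the mean and accounts for $C_{p,p}=S_p$; for $p\neq q$ all orders satisfy $p+q-2r\geq 1$, so every term is mean-zero and $C_{p,q}=0$ is consistent. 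By orthogonality of multiple integrals of distinct orders and the isometry $\E(I_n(h)^2)=n!\|h\|^2$ for symmetric $h$, the variance equals $\sum_r p^2[(r-1)!\binom{p-1}{r-1}\binom{q-1}{r-1}]^2(p+q-2r)!\,\|f_{p,i}\widetilde{\otimes}_r f_{q,j}\|^2$, the index $r$ ranging over $1,\dots,p-1$ when $p=q$ and over $1,\dots,p\wedge q$ when $p\neq q$.

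It then remains to pass from the sharp scalar constants to $c_{p,q}(r)$ and to reassemble the Hilbert-valued contractions. Bounding the symmetrized contraction by the non-symmetrized one, $\|f_{p,i}\widetilde{\otimes}_r f_{q,j}\|\leq\|f_{p,i}\otimes_r f_{q,j}\|$, and estimating $p\leq p^2$ and $\sqrt{(p+q-2r)!}\leq(p+q-2r)!$ in the sharp constant $p(r-1)!\binom{p-1}{r-1}\binom{q-1}{r-1}\sqrt{(p+q-2r)!}$ replaces it by the constant $c_{p,q}(r)$ defined in~\eqref{eq:21}. Finally, since $f_p=\sum_i f_{p,i}\otimes e_i$ gives $f_p\otimes_r f_q=\sum_{i,j}(f_{p,i}\otimes_r f_{q,j})\otimes e_i\otimes e_j$, summing over $i,j$ collapses the scalar contraction norms into $\|f_p\otimes_r f_q\|^2_{\mathfrak{H}^{\otimes(p+q-2r)}\otimes K^{\otimes 2}}$; splitting the outer sum into its $p=q$ and $p\neq q$ parts then produces exactly $\widetilde{M}(F)$ and $\widetilde{C}(F)$.

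I expect the scalar variance computation itself to be routine, being standard finite-dimensional Malliavin calculus as used in~\cite{azmoodeh-campese-poly:2014:fourth-moment-theorems,campese-nourdin-peccati-ea:2016:multivariate-gaussian-approximations}, so the main obstacle is the bookkeeping: justifying the interchange of the infinite sums over $p,q$ and over the basis $\{e_i\}$ (this is where the hypothesis $F\in\mathbb{D}^{1,4}$ enters, ensuring finiteness of all fourth moments and contraction norms), correctly isolating the constant $r=p$ term as the covariance contribution so that subtracting $C_{p,q}$ leaves precisely the mean-zero terms, and verifying that the basis-wise reassembly into the $K^{\otimes 2}$-valued contraction norms is independent of the chosen orthonormal basis of $K$.
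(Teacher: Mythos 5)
Your proposal is correct and follows essentially the same route as the paper: decompose $S=\sum_{p,q}C_{p,q}$, expand in an orthonormal basis of $K$ to reduce to variances of scalar carré du champ terms as in~\eqref{eq:71}, control those by (non-symmetrized) contraction norms, and resum over the basis to obtain the $K^{\otimes 2}$-valued contractions. The only difference is presentational — you re-derive the scalar variance identity from the product formula~\eqref{multiplicationformulamultint}, whereas the paper simply cites the finite-dimensional result \cite[Lemma 6.2.1]{nourdin-peccati:2012:normal-approximations-malliavin} before passing to the stated constants $c_{p,q}(r)$.
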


\begin{proof}
  Let $\left\{ e_i \colon i \in \N \right\}$ be an orthonormal basis of $K$ and abbreviate the inner products $\left\langle F_p,e_i \right\rangle$ and $\left\langle f_p,e_i \right\rangle$ by $F_{p,i}$ and $f_{p,i}$, respectively.
  As in the proof of Theorem~\ref{thm:14}, it follows that
\begin{equation*}
  \left\lVert
    \left\langle DF,-DL^{-1}F \right\rangle_{\mathfrak{H}} - S
  \right\rVert_{L^2(\Omega;\on{HS}(K))}
  \leq
  \sum_{p,q=1}^{\infty}
  \sqrt{
    \sum_{i,j=1}^{\infty}
    \var \left( \left\langle D F_{p,i},-DL^{-1} F_{q,j} \right\rangle_{\mathfrak{H}} \right)
  },
\end{equation*}
so that we can apply the finite-dimensional result~\cite[Lemma 6.2.1]{nourdin-peccati:2012:normal-approximations-malliavin} and obtain
\begin{align}\notag
  \var \left( \left\langle D F_{p,i},-DL^{-1} F_{q,j} \right\rangle_{\mathfrak{H}} \right)
  \\ &= \label{eq:36}
  \begin{cases}
      \sum_{r=1}^{p \land q} c_{p,q}(r)^2
  \left\lVert f_{p,i} \widetilde{\otimes}_r f_{q,j} \right\rVert_{\mathfrak{H}^{\otimes (p+q-2r)}}^2
  & \qquad
  \text{if $p \neq q$,} \\
      \sum_{r=1}^{p -1} c_{p,p}(r)^2
      \left\lVert f_{p,i} \widetilde{\otimes}_r f_{p,j} \right\rVert_{\mathfrak{H}^{\otimes (2p-2r)}}^2
      & \qquad
      \text{if $p=q$,}
      \end{cases}
      \\ &\leq \notag
p^2
  \begin{cases}
      \sum_{r=1}^{p \land q} c_{p,q}(r)^2
  \left\lVert f_{p,i} \otimes_r f_{q,j} \right\rVert_{\mathfrak{H}^{\otimes (p+q-2r)}}^2
  & \qquad
  \text{if $p \neq q$,} \\
      \sum_{r=1}^{p -1} c_{p,p}(r)^2
      \left\lVert f_{p,i} \otimes_r f_{p,j} \right\rVert_{\mathfrak{H}^{\otimes (2p-2r)}}^2
      & \qquad
      \text{if $p=q$.}
  \end{cases}
\end{align}
The assertion follows after summing over $i$ and $j$.
\end{proof}
Combined with Theorem~\ref{thm:1}, the contraction bound just obtained yields the following result.
\begin{theorem}
  \label{thm:7}
  Let $Z$ be a centered Gaussian random variable on $K$ with covariance
  operator $S$ and $F \in \mathbb{D}^{1,4}$  with covariance operator $T$ and chaos decomposition $F= \sum_{p=1}^{\infty} I_p(f_p)$, where, for each $p \geq 1$, $f_p\in \mathfrak{H}^{\odot p}\otimes K$. Then
\begin{equation*}
  d_2(F,Z)
       \leq
         \frac{1}{2}
         \left( \widetilde{M}(F)
         +
         \widetilde{C}(F)
         +
         \left\lVert S-T \right\rVert_{\on{HS}}
         \right),
\end{equation*}
where the quantities $\widetilde{M}(F)$ and $\widetilde{C}(F)$ are defined as in Theorem~\ref{thm:15}.
\end{theorem}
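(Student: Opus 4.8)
The plan is to read off the result as a direct composition of the abstract carré du champ bound of Theorem~\ref{thm:1} with the Ornstein-Uhlenbeck contraction bound of Theorem~\ref{thm:15}, bridging the two by the triangle inequality through the (deterministic) covariance operator $T$ of $F$.

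First I would invoke Theorem~\ref{thm:1} for the pair $(F,Z)$. Since $Z$ is a centered, non-degenerate Gaussian with covariance operator $S$ and $F \in \mathbb{D}^{1,4}$ lies in the Dirichlet domain $\mathbb{D}$, Theorem~\ref{thm:1} gives
\begin{equation*}
  d_2(F,Z) \leq \frac{1}{2} \left\lVert \Gamma(F,-L^{-1}F) - S \right\rVert_{L^2(\Omega;\on{HS})}.
\end{equation*}
In the present Ornstein-Uhlenbeck setting one has $\Gamma(F,G) = \left\langle DF,DG \right\rangle_{\mathfrak{H}}$, hence $\Gamma(F,-L^{-1}F) = \left\langle DF,-DL^{-1}F \right\rangle_{\mathfrak{H}}$, which is precisely the object estimated in Theorem~\ref{thm:15}. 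I would then insert the covariance operator $T$ of $F$ and use that $T-S$ is deterministic to obtain
\begin{equation*}
  \left\lVert \left\langle DF,-DL^{-1}F \right\rangle_{\mathfrak{H}} - S \right\rVert_{L^2(\Omega;\on{HS})} \leq \left\lVert \left\langle DF,-DL^{-1}F \right\rangle_{\mathfrak{H}} - T \right\rVert_{L^2(\Omega;\on{HS})} + \left\lVert S - T \right\rVert_{\on{HS}}.
\end{equation*}
Applying Theorem~\ref{thm:15} to $F$ (whose covariance operator plays here the role of the operator called $S$ there, namely $T$) bounds the first term on the right by $\widetilde{M}(F) + \widetilde{C}(F)$, and assembling the three estimates yields the claim.

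I do not expect any genuine obstacle, since the statement is essentially a corollary of two results already proved; the only points demanding care are bookkeeping ones. One must keep the covariance operators straight, as $S$ denotes the covariance of $F$ in Theorem~\ref{thm:15} but the covariance of the Gaussian $Z$ here, with $T$ taking over the former role, and one should record that $F \in \mathbb{D}^{1,4}$ indeed embeds into $\mathbb{D}$ (together with the non-degeneracy of $Z$) so that Theorem~\ref{thm:1} is applicable. Finally, if $\widetilde{M}(F) + \widetilde{C}(F)$ happens to be infinite the asserted bound holds trivially, so no separate finiteness argument is needed.
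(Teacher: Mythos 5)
Your proposal is correct and follows exactly the route the paper intends: the paper offers no separate proof of this theorem beyond the remark that it follows by combining Theorem~\ref{thm:1} with the contraction bound of Theorem~\ref{thm:15}, and your triangle-inequality insertion of the covariance operator $T$ is precisely the bookkeeping step that makes this combination explicit.
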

As special cases for $K=\mathbb{\R}$, Theorem~\ref{thm:7} includes the
main results of~\cite{fissler-thaele:2016:new-quantitative-central}
and~\cite{nourdin-peccati-reveillac:2010:multivariate-normal-approximation}
(as usual in finite dimension, $d_2$ can be replaced by the
Wasserstein distance -- see the proof of Theorem \ref{thm:1}).

Let us now show how the results proved in Section~\ref{sec:fourth-moment-bounds} can be refined in the Wiener chaos setting. We start with the Fourth Moment Theorem.

\begin{theorem}[Infinite-dimensional Fourth Moment Theorem]
  \label{thm:10}
  Let $Z$ be a centered Gaussian random variable on $K$ with covariance
  operator $S$, and, for $p \geq 1$, let $\left\{ F_n \colon n \in \N
  \right\} = \left\{ I_p(f_n) \colon n \in \N \right\}$ be a sequence of $K$-valued multiple integrals
  such that $
  \on{tr}(S_n - S) \to 0$ as $n \to \infty$. Then, as $n \to \infty$, the following assertions are equivalent.
  \begin{enumerate}[(i)]
  \item\label{item:1}
    $F_n$ converges in distribution to $Z$,
\item\label{item:2} $\E \left( \left\lVert F_n\right\rVert^4 \right)  \to \E \left( \left\lVert Z \right\rVert^4 \right)$,
\item\label{item:3}
   $\left\lVert f_n \otimes_r f_n \right\rVert_{\mathfrak{H}^{(2p-2r)} \otimes K^{\otimes 2}} \to 0$ for all $r=1,\dots,p-1$,
\item\label{item:4} $\left\lVert f_n \widetilde{\otimes}_r f_n \right\rVert_{\mathfrak{H}^{(2p-2r)} \otimes K^{\otimes 2}} \to 0$ for all $r=1,\dots,p-1$,
\item\label{item:5}
  $\left\lVert \left\langle DF_n,D F_n \right\rangle_{\mathfrak{H}} - pS_n \right\rVert_{L^2(\Omega;\on{HS}(K))} \to 0$.
  \end{enumerate}
\end{theorem}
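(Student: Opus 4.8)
The plan is to establish the chain of equivalences $(ii)\Leftrightarrow(iii)\Leftrightarrow(iv)\Leftrightarrow(v)$ by means of the explicit algebraic identities already recorded in the proofs of Lemma~\ref{lem:2} and Theorem~\ref{thm:15}, and then to close the loop with the two genuinely analytic implications $(v)\Rightarrow(i)$ and $(i)\Rightarrow(ii)$. The key preliminary observation is that each $F_n=I_p(f_n)$ lives in a single Wiener chaos of fixed order $p$, so that $-L^{-1}F_n=p^{-1}F_n$ and consequently $\langle DF_n,DF_n\rangle_{\mathfrak H}=p\,\Gamma(F_n,-L^{-1}F_n)$. This reduces every quantity occurring in the statement to the single-chaos case, where the inequalities of Theorems~\ref{thm:15} and~\ref{thm:6} become exact identities.

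For the identity-based equivalences I would proceed as follows. First, $(iv)\Leftrightarrow(v)$: specializing the exact variance formula~\eqref{eq:36} to $p=q$ and summing over an orthonormal basis $\{e_i\}$ of $K$ gives
\begin{equation*}
  \norm{\langle DF_n,DF_n\rangle_{\mathfrak H}-pS_n}_{L^2(\Omega;\on{HS}(K))}^2
  = p^2\sum_{r=1}^{p-1} c_{p,p}(r)^2\,\norm{f_n\widetilde\otimes_r f_n}_{\mathfrak H^{\otimes(2p-2r)}\otimes K^{\otimes2}}^2,
\end{equation*}
so that $(v)$ is equivalent to the vanishing of all symmetrized contractions, i.e.\ to $(iv)$. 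The equivalence $(iii)\Leftrightarrow(iv)$ is the standard fact that symmetrized and non-symmetrized contraction norms tend to zero simultaneously; since $\otimes_r$ and the symmetrization act only on the $\mathfrak H$-slots, the scalar argument applies verbatim, the factor $K^{\otimes2}$ coming along as a spectator. Finally, $(ii)\Leftrightarrow(iii)$ follows by summing the identity~\eqref{symcontandnonsym} over the basis of $K$, which expresses $\E\norm{F_n}^4-(\on{tr}S_n)^2-2\norm{S_n}_{\on{HS}}^2$ as a strictly positive combination of the norms $\norm{f_n\otimes_r f_n}^2$ and $\norm{f_n\widetilde\otimes_r f_n}^2$; in view of~\eqref{eq:22} in the form $\E\norm{Z_n}^4=(\on{tr}S_n)^2+2\norm{S_n}_{\on{HS}}^2$ (with $Z_n$ Gaussian of covariance $S_n$), the left-hand side equals $\E\norm{F_n}^4-\E\norm{Z_n}^4$, which vanishes asymptotically precisely when the contractions do, provided $\E\norm{Z_n}^4\to\E\norm Z^4$.

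It then remains to close the loop. For $(v)\Rightarrow(i)$ I would invoke Theorem~\ref{thm:1}, which bounds $d_2(F_n,Z)$ by $\tfrac12\norm{\Gamma(F_n,-L^{-1}F_n)-S}_{L^2(\Omega;\on{HS})}$; writing $\Gamma(F_n,-L^{-1}F_n)-S=(\Gamma(F_n,-L^{-1}F_n)-S_n)+(S_n-S)$ and using the triangle inequality, the first summand is $p^{-1}$ times the quantity in $(v)$ and the second is $\norm{S_n-S}_{\on{HS}}$, both tending to $0$. For $(i)\Rightarrow(ii)$ I would argue by uniform integrability: since $F_n$ belongs to the fixed $p$-th $K$-valued Wiener chaos, vector-valued Gaussian hypercontractivity yields $\E\norm{F_n}^{4+\varepsilon}\le C_{p,\varepsilon}(\E\norm{F_n}^2)^{(4+\varepsilon)/2}$, and as $\E\norm{F_n}^2=\on{tr}S_n$ is bounded, the family $\{\norm{F_n}^4\}$ is uniformly integrable; since $\norm{F_n}$ converges in distribution to $\norm Z$ by the continuous mapping theorem, $(i)$ forces $\E\norm{F_n}^4\to\E\norm Z^4$.

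The main obstacles are twofold. The genuinely analytic input is the vector-valued hypercontractivity used in $(i)\Rightarrow(ii)$: one must check that the scalar Nelson estimate survives for a $K$-valued chaos, which it does for Hilbert-space targets. The more delicate point is the bookkeeping with covariance operators: all identity-based steps compare $F_n$ with the moving Gaussian $Z_n$ of covariance $S_n$, whereas $(i)$ and $(ii)$ refer to the fixed limit $Z$ of covariance $S$, so the argument hinges on upgrading $\on{tr}(S_n-S)\to0$ to $\norm{S_n-S}_{\on{HS}}\to0$ (equivalently $\norm{S_n}_{\on{HS}}\to\norm S_{\on{HS}}$), which is what makes $\E\norm{Z_n}^4\to\E\norm Z^4$ and the term $\norm{S_n-S}_{\on{HS}}$ above vanish. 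Such an upgrade is automatic once $(i)$ is in hand, via weak-operator convergence of $S_n$ together with $\on{tr}S_n\to\on{tr}S$ and the Radon--Riesz property of the trace class, but for the implications feeding \emph{into} $(i)$ it must be taken as part of the standing covariance hypothesis.
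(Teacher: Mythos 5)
Your proposal is correct and follows essentially the same route as the paper's proof: the cycle through hypercontractivity and uniform integrability for $(i)\Rightarrow(ii)$, the summed identity~\eqref{symcontandnonsym} together with~\eqref{eq:22} for the moment--contraction link, the variance formula~\eqref{eq:36} for relating $(iv)$ and $(v)$, and Theorems~\ref{thm:15} and~\ref{thm:1} for $(v)\Rightarrow(i)$. The only caveat is that your claimed direct equivalence $(iii)\Leftrightarrow(iv)$ is really just the norm inequality $(iii)\Rightarrow(iv)$ plus a detour around the cycle (there is no ``verbatim scalar argument'' giving $(iv)\Rightarrow(iii)$ on its own), which is harmless since your graph of implications is strongly connected; on the other hand, your explicit discussion of upgrading $\on{tr}(S_n-S)\to 0$ to $\left\lVert S_n-S\right\rVert_{\on{HS}}\to 0$ addresses a point the paper's proof uses silently.
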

\begin{proof}
As $ \on{tr}(S_{n} - S) \to 0$ as $n \to \infty$, hypercontractivity of Wiener chaos implies that for any $ r\geq 2$, $\sup_n
\Ex{\norm{F_n}^r} < \infty$, which yields that~\emph{(\ref{item:1})}
implies~\emph{(\ref{item:2})} by uniform integrability. Summing~\eqref{symcontandnonsym} over $i$ and $j$ and using~\eqref{eq:22} yields the implication~\emph{(\ref{item:2})}$\Rightarrow$\emph{(\ref{item:3})} (and also~\emph{(\ref{item:2})}$\Rightarrow$\emph{(\ref{item:4})}). The fact that
 $\left\lVert f_n
  \widetilde{\otimes}_r f_n\right\rVert_{\mathfrak{H}^{(2p-2r)}
  \otimes K^{\otimes 2}} \leq \left\lVert f_n \otimes_r f_n \right\rVert_{\mathfrak{H}^{(2p-2r)} \otimes K^{\otimes 2}}$ gives \emph{(\ref{item:3})}$\Rightarrow$\emph{(\ref{item:4})} and the implication
\emph{(\ref{item:4})}$\Rightarrow$\emph{(\ref{item:5})}
follows by summing~\eqref{eq:36} over $i$ and $j$. Finally, \emph{(\ref{item:5})}$\Rightarrow$\emph{(\ref{item:1})} is a consequence of Theorem~\ref{thm:15}.
\end{proof}
The corresponding Fourth Moment Theorems for random variables with infinite chaos expansion (Theorem~\ref{thm:12} in Section~\ref{sec:fourth-moment-bounds}) can be expressed using contractions as follows:

\begin{theorem}
Let $\left\{ F_n \colon n \in \N \right\}$ be a sequence of square integrable $K$-valued random variables with chaos decomposition
\begin{equation}
  \label{infinitechaosexpforreffinitedimexamples}
F_n = \sum_{p=1}^{\infty}I_p(f_{p,n}),
\end{equation}
where, for each $n,p \geq 1$, $f_{p,n} \in \mathfrak{H}^{\odot
  p}\otimes K$. Suppose that:
\begin{enumerate}[(i)]
\item for every $p  \in \mathbb{N}$ there exists $f_p \in \mathfrak{H}^{\odot p} \otimes K$ such that $\left\lVert f_{n,p} - f_p \right\rVert_{\mathfrak{H}^{\otimes p} \otimes K} \to 0$,
  \begin{equation*}
    \sum_{p=1}^{\infty} p! \left\lVert f_p  \right\rVert_{\mathfrak{H}^{\otimes p} \otimes K}^2 < \infty
  \end{equation*}
  and
\begin{equation*}
  \lim_{N \to \infty} \sup_{n \geq 1} \sum_{p=N+1}^{\infty} p! \left\lVert f_{p,n} \right\rVert_{\mathfrak{H}^{\otimes p}\otimes K}^2 = 0.
\end{equation*}
\item for all $p \in \N$ and $r=1,\dots,p-1$, it holds that
\begin{equation*}
  \left\lVert f_{p,n} \otimes_{r} f_{p,n} \right\rVert_{\mathfrak{H}^{\otimes 2(p-r)} \otimes K^{\otimes 2}} \to 0.
\end{equation*}
\end{enumerate}
Then $F_n$ converges in distribution to a centered Gaussian $Z$ with covariance operator $S$ given by
\begin{equation*}
  S = \sum_{p=1}^{\infty} \E \left( \left\lVert f_p \right\rVert^2_{\mathfrak{H}^{\otimes p}} \right),
\end{equation*}
where, with some slight abuse of notation, $\E \left( \left\lVert f_p \right\rVert_{\mathfrak{H}^{\otimes p}}^2 \right) \in K \otimes K \simeq L(K,K)$ denotes the covariance operator of $I_p(f_p)$.
\end{theorem}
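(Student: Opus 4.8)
The plan is to recognise this statement as the Wiener-chaos incarnation of Theorem~\ref{thm:12} and to prove it along the same truncation scheme, but replacing the abstract moment bound of Theorem~\ref{thm:6} by its contraction refinement Theorem~\ref{thm:7}. First recall that each summand $F_{p,n}=I_p(f_{p,n})$ is a chaotic eigenfunction of the Ornstein--Uhlenbeck generator (by the product formula~\eqref{multiplicationformulamultint}) and satisfies the covariance condition~\eqref{eq:18} by Lemma~\ref{lem:2}, so the hypotheses underlying Theorem~\ref{thm:7} are in force. Writing $S_p$ for the covariance operator of $I_p(f_p)$, I would set $F_{n,N}=\sum_{p=1}^N I_p(f_{p,n})$, take $Z_N$ to be a centered Gaussian with covariance operator $\sum_{p=1}^N S_p$, and split $d_2(F_n,Z)\le d_2(F_n,F_{n,N})+d_2(F_{n,N},Z_N)+d_2(Z_N,Z)$.

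For the two tail terms I would use the Wiener isometry $\E\norm{I_p(g)}^2=p!\,\norm{g}_{\mathfrak{H}^{\otimes p}\otimes K}^2$ together with the orthogonality of the chaoses, which gives $d_2(F_n,F_{n,N})\le\E\norm{F_n-F_{n,N}}\le(\sum_{p>N}p!\,\norm{f_{p,n}}^2)^{1/2}$; by the third part of hypothesis~(i) this is smaller than any prescribed $\varepsilon$ for a suitable $N$ chosen uniformly in $n$. Likewise, since $\on{tr}S_p=p!\,\norm{f_p}^2$ and $\sum_p p!\,\norm{f_p}^2<\infty$, a coupling argument yields $d_2(Z_N,Z)\le(\sum_{p>N}\on{tr}S_p)^{1/2}\to 0$ as $N\to\infty$. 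With $N$ fixed as above, it then remains to send $n\to\infty$ in the middle term.

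Here Theorem~\ref{thm:7} applies to the finite sum $F_{n,N}$ and bounds $d_2(F_{n,N},Z_N)$ by $\tfrac12(\widetilde{M}(F_{n,N})+\widetilde{C}(F_{n,N})+\norm{S_N-T_{n,N}}_{\on{HS}})$, where $S_N=\sum_{p\le N}S_p$ and $T_{n,N}$ is the covariance of $F_{n,N}$. The operator term vanishes because $f_{p,n}\to f_p$ forces $S_{p,n}\to S_p$ in Hilbert--Schmidt norm (a finite sum over $p\le N$). The term $\widetilde{M}(F_{n,N})$ is a finite sum of the within-chaos contractions $\norm{f_{p,n}\otimes_r f_{p,n}}$ with $1\le r\le p-1$, which tend to $0$ directly by hypothesis~(ii). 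The only genuinely delicate point is $\widetilde{C}(F_{n,N})\to 0$, i.e.\ the vanishing of the cross contractions $\norm{f_{p,n}\otimes_l f_{q,n}}$ for $p\ne q$ and $1\le l\le p\wedge q$.

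I expect this cross-contraction estimate to be the main obstacle. The natural tool is the polarisation identity $\norm{f\otimes_l g}^2=\langle f\otimes_{p-l}f,\,g\otimes_{q-l}g\rangle$; expanding in an orthonormal basis of $K$ and summing the scalar components turns $\norm{f_{p,n}\otimes_l f_{q,n}}^2$ into $\langle A_n,B_n\rangle\le\norm{A_n}\norm{B_n}$, where $A_n=\sum_i (f_{p,n})_i\otimes_{p-l}(f_{p,n})_i$ and $B_n=\sum_j (f_{q,n})_j\otimes_{q-l}(f_{q,n})_j$ are the $K$-partial traces of $f_{p,n}\otimes_{p-l}f_{p,n}$ and $f_{q,n}\otimes_{q-l}f_{q,n}$. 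For each $l$ at least one of the orders $p-l$, $q-l$ is strictly positive and strictly below the corresponding chaos order, so one would like the associated factor to vanish; the subtlety is that $A_n$ and $B_n$ are \emph{partial traces} (sums of diagonal blocks) rather than Hilbert--Schmidt norms, so in infinite dimension their smallness does not follow from hypothesis~(ii) alone, since a sequence may have Hilbert--Schmidt norm tending to $0$ while its partial trace stays bounded. I would resolve this by passing to the limit kernels: continuity of the contraction map together with the convergence $f_{p,n}\to f_p$ from hypothesis~(i) and the vanishing in hypothesis~(ii) force $f_p\otimes_r f_p=0$ for every $r=1,\dots,p-1$, hence $(f_p)_i\otimes_r (f_p)_{i'}=0$ for all $i,i'$; consequently the limiting partial trace $A_\infty=\sum_i (f_p)_i\otimes_{p-l}(f_p)_i$ is exactly $0$, so $\norm{A_n}\to\norm{A_\infty}=0$ while the other factor remains bounded. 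This gives $\widetilde{C}(F_{n,N})\to 0$, whence $\limsup_n d_2(F_n,Z)\le\varepsilon$; letting $\varepsilon\downarrow 0$ and invoking Lemma~\ref{lem:8} yields convergence in distribution of $F_n$ to the centered Gaussian with covariance $S=\sum_p S_p$.
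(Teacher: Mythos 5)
Your proposal is correct and follows essentially the same route as the paper's (very terse) proof: translate hypothesis~(i) into the trace conditions needed for the truncation scheme of Theorem~\ref{thm:12}, and then control the middle term $d_2(F_{n,N},Z_N)$ via the contraction bound of Theorems~\ref{thm:15} and~\ref{thm:7}. You in fact supply a step the paper leaves implicit, namely the vanishing of the cross-chaos contractions $\left\lVert f_{p,n}\otimes_l f_{q,n}\right\rVert$ in $\widetilde{C}$, which are not directly covered by hypothesis~(ii); your polarisation identity $\left\lVert f_{p,n}\otimes_l f_{q,n}\right\rVert^2=\langle A_n,B_n\rangle$ together with the observation that the local Lipschitz continuity of $f\mapsto\sum_i f_i\otimes_{p-l}f_i$ and the forced identity $f_p\otimes_r f_p=0$ make the relevant limiting partial trace vanish is a correct and genuinely needed way to close that gap in the infinite-dimensional setting.
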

\begin{proof}
  For $p,n \in \mathbb{N}$, let $S_p$ and $S_{p,n}$ be the covariance operators of $I_p(f_p)$ and $I_p(f_{p,n})$, respectively. Then
\begin{align*}
  \left| \on{tr}(S_{p,n}-S_p) \right| &=
                                        \left|
                         \E \left(
                         \left\lVert I_p(f_{p,n}) \right\rVert^2_K
                         -
                         \left\lVert I_p(f_p) \right\rVert^2_K
                                        \right)
                                         \right|
  \\ &\leq
       \sqrt{
       \E \left(
       \left\lVert I_p(f_{p,n}- I_p(f_p) \right\rVert^2_K
       \right)
       \E \left(
       \left\lVert I_p(f_{p,n}) + I_p(f_p) \right\rVert^2_K
       \right)
       }
  \\ &=
       p!
       \left\lVert f_{p,n}-f_p \right\rVert_{\mathfrak{H}^{\otimes p} \otimes K}
       \left\lVert f_{p,n}+f_p \right\rVert_{\mathfrak{H}^{\otimes p} \otimes K},
\end{align*}
which tends to zero as $n \to \infty$ by assumption $(i)$. As $\on{tr}(S_{p,n}) = \E \left( \left\lVert I_{p,n} \right\rVert_K^2 \right) = p! \left\lVert f_{p,n} \right\rVert_{\mathfrak{H}^{\otimes p}\otimes K}^2$, the same assumption also implies that
\begin{equation*}
  \lim_{N \to \infty} \sup_{n \geq 1} \sum_{p=N+1}^{\infty} \on{tr}(S_{p,n}) = 0.
\end{equation*}
The rest of the proof can now be done as in Theorem~\ref{thm:12}, using the  bound provided by Theorem~\ref{thm:15}.
\end{proof}

\section{Quantifying the functional Breuer-Major Theorem}
\label{sec:quant-funct-breu}

In this section, we will give rates of convergence for a functional
version of the seminal Breuer-Major Theorem. To introduce the setting, let $X= \left\{ X_t \colon t \geq 0 \right\}$ be a centered, stationary Gaussian process and define $\rho(k)= \E \left( X_0X_k \right)$ such that $\E \left( X_s X_t \right) = \rho(t-s)=\rho(s-t)$. Assume $\rho(0)=1$, denote the standard Gaussian measure on $\R$ by $\gamma$ and let $\varphi \in L^2(\R,\gamma)$ be of Hermite rank $d \geq 1$, so that $\varphi$ can be expanded in the form
\begin{equation}
 \label{eq:44}
  \varphi(x) = \sum_{i=d}^{\infty} c_i H_i(x), \qquad c_d \neq 0,
\end{equation}
where $H_i(x) =  (-1)^{i} \me^{x^2/2} \left( \frac{d}{dx} \right)^i
\me^{-x^2/2}$ is the $i$th Hermite polynomial. The Breuer-Major Theorem then states that under the condition
\begin{equation*}
  \sum_{k \in \Z}^{} \rho(k)^d < \infty,
\end{equation*}
the finite-dimensional distributions of the stochastic process
$\left\{ U_n(t) \colon t \in [0,1] \right\}$ given by
\begin{equation}
  \label{eq:47}
  U_n(t) = \frac{1}{\sqrt{n}}
  \sum_{i=0}^{\lfloor nt \rfloor} \varphi(X_i)
\end{equation}
converge in law to those of a scaled Brownian motion $\sigma W$, where
$W=\left\{ W_t \colon t \in [0,1] \right\}$  is a standard Brownian motion and the scaling is given by
\begin{equation}
  \label{eq:67}
  \sigma^2 = \sum_{p=d}^{\infty} p! c_p^2 \sum_{k \in \Z}^{} \rho(k)^p.
\end{equation}
After its discovery by Breuer and Major (see~\cite{breuer-major:1983:central-limit-theorems}), it took more than twenty years until progress was made towards quantifying this result. Taking $X$ to be the normalized increment process of a fractional Brownian motion, Nourdin and Peccati (\cite{nourdin-peccati:2009:steins-method-wiener}), as an illustration of the Malliavin-Stein method introduced in the same reference, were able to associate rates to the normal convergence of the \emph{chaotic projections} of the coordinate sequences of $U_n$, i.e., to the random sequence
\begin{equation}
  \label{eq:59}
  \frac{1}{\sqrt{n}} \sum_{k=1}^{n} H_p \left( n^H
    \left(
       B_{\frac{k+1}{n}}^H - B_{\frac{k}{n}}^H
    \right)
  \right),
\end{equation}
where $H_p$ denotes the $p$th Hermite polynomial and $B^H$ is a fractional Brownian motion with Hurst index $H$. Note that the random variables defined in~\eqref{eq:59} can be represented as multiple integrals of order $p$ and therefore are elements of the $p$th Wiener chaos. Recently, the Breuer-Major Theorem has been intensively studied, and very strong results have been obtained concerning the coordinate sequence, providing rates of convergence in total variation distance for general functions $\varphi$ under rather weak assumptions (see~\cite{%
  nualart-zhou:2018:total-variation-estimates%
  ,nourdin-nualart-peccati:2019:breuer-major-theorem-total%
  ,nourdin-peccati-yang:2019:berry-esseen-bounds-breuer-major}).
Turning to infinite-dimension, it also has been proved recently
in~\cite{campese-nourdin-nualart:2018:continuous-breuer-major-theorem} and~\cite{nourdin-nualart:2018:functional-breuer-major-theorem} that the process $U_n$ converges in distribution towards a scaled Brownian motion in the Skorohod space or in the space of continuous functions (replacing the Gauss brackets in the sum by a linear interpolation).

In this section, it will be shown how, using our bounds, one can associate rates to the aforementioned functional convergences, taking place in a suitable Hilbert space $K$ containing $D([0,1])$ and $C_0([0,1])$, respectively. The rates are obtained through the contraction bounds obtained in the previous section, which allow a natural and straightforward lifting of the one-dimensional results.
We illustrate this method on~\cite[Example
2.5]{nourdin-peccati-podolskij:2011:quantitative-breuer-major-theorems},
where $\varphi=H_p$ and $\rho(k) = k^{\alpha} l(k)$ for some $\alpha<
0$ and a slowly varying function $l$. This latter assumption on $\rho$
for example includes the case where $X$ is the increment process of a
fractional Brownian motion. Also, for simplicity, we set
$K=L^2([0,1])$. Our results also allow the analysis of more general
functions $\varphi$ and smaller Hilbert spaces $K$ with finer
topologies, such as the Besov-Liouville
(see~~\cite{samko-kilbas-marichev:1993:fractional-integrals-derivatives}
for definitions
and~\cite{coutin-decreusefond:2013:steins-method-brownian} for proofs
of related functional limit theorems in this space) or other
reproducing kernel Hilbert spaces, but as the calculations are more
involved and also quite lenghty and technical, we decided to focus on
the general picture in this article and will provide full details on
this topic in a dedicated followup work.

~\newline
The statement is as follows.

\begin{theorem}
  \label{thm:13}
  Let $\left\{U_n(t) \colon t \in [0,1] \right\}$ be the stochastic process defined in~\eqref{eq:47}, considered as a sequence of random variables taking values in $L^2([0,1])$,   assume that $\varphi=H_p$ for some $p  \in \mathbb{N}$ and that the covariance function $\rho$ of the underlying centered, stationary Gaussian process is of the form $\rho(k)=\left| k \right|^{\alpha} l(\left| k \right|)$, where $\alpha<-1/p$ and $l$ is a slowly varying function. Then there exists a constant $C > 0$, such that

\begin{equation}
 \label{eq:60}
  d_2(U_n,\sigma W) \leq C r_{\alpha}(n)
\end{equation}
where $\sigma$ is defined in~\eqref{eq:67}, $W$ denotes a standard Brownian motion on $L^2([0,1])$ and the rate function is given by
\begin{equation*}
  r_{\alpha}(n) =
  \begin{cases}
    n^{-1/2}  &\qquad \qquad \text{if $\alpha < -1$}, \\
    n^{\alpha/2}l(n) & \qquad \qquad \text{if $\alpha \in \left( -1, -\frac{1}{p-1} \right)$},\\
    n^{(\alpha q+1)/2 } l^2(n) & \qquad \qquad \text{if $\alpha \in \left( -\frac{1}{p-1}, \frac{-1}{p} \right)$.}
  \end{cases}
\end{equation*}
\end{theorem}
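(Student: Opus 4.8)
The plan is to represent $U_n$ as a single $K$-valued multiple Wiener integral and then apply the contraction bound of Theorem~\ref{thm:7}. Write $X_i = W(\epsilon_i)$ for unit vectors $\epsilon_i \in \mathfrak{H}$ with $\langle \epsilon_i,\epsilon_j\rangle_{\mathfrak{H}} = \rho(i-j)$, so that $H_p(X_i) = I_p(\epsilon_i^{\otimes p})$. Viewing the step function $t \mapsto U_n(t)$ as an element of $K = L^2([0,1])$, one gets
\[
  U_n = I_p(f_n), \qquad f_n = \frac{1}{\sqrt n} \sum_{i=0}^{n} \epsilon_i^{\otimes p} \otimes \mathbf{1}_{[i/n,1]} \in \mathfrak{H}^{\odot p} \otimes K,
\]
which for each $n$ is a finite sum of smooth elements, hence lies in $\mathbb{D}^{1,4}$. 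Since $U_n$ occupies a single Wiener chaos, the cross term $\widetilde{C}(U_n)$ vanishes, and with $Z = \sigma W$ (a centered, non-degenerate Gaussian on $K$ whose covariance operator $S$ is the integral operator with kernel $\sigma^2 \min(s,t)$), Theorem~\ref{thm:7} reduces to
\[
  d_2(U_n,\sigma W) \leq \frac{1}{2} \left( \widetilde{M}(U_n) + \norm{S - T_n}_{\on{HS}} \right),
\]
where $T_n$ is the covariance operator of $U_n$ and $\widetilde{M}(U_n)^2 = \sum_{r=1}^{p-1} c_{p,p}(r)^2 \norm{f_n \otimes_r f_n}_{\mathfrak{H}^{\otimes(2p-2r)} \otimes K^{\otimes 2}}^2$. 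It then suffices to bound each summand by a constant multiple of $r_\alpha(n)$.

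For the contraction term I would use $\epsilon_i^{\otimes p} \otimes_r \epsilon_j^{\otimes p} = \rho(i-j)^r\, \epsilon_i^{\otimes(p-r)} \otimes \epsilon_j^{\otimes(p-r)}$ to obtain
\[
  \norm{f_n \otimes_r f_n}^2 = \frac{1}{n^2} \sum_{i,j,k,l} \rho(i-j)^r \rho(k-l)^r \rho(i-k)^{p-r} \rho(j-l)^{p-r}\, \beta_n(i,k)\, \beta_n(j,l),
\]
where $\beta_n(i,k) = \langle \mathbf{1}_{[i/n,1]}, \mathbf{1}_{[k/n,1]} \rangle_K \in [0,1]$. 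Bounding the (nonnegative) squared norm by the sum of the absolute values of its summands and using $\beta_n \leq 1$, the right-hand side is dominated by the corresponding scalar contraction norm $\norm{g_n \otimes_r g_n}^2$ of the coordinate Breuer--Major functional $g_n = n^{-1/2}\sum_i \epsilon_i^{\otimes p}$ computed with $|\rho|$ in place of $\rho$. This is the \emph{natural lifting} referred to above: the $K$-valued contractions are controlled by their scalar counterparts, and since $|\rho(k)|^p = |k|^{\alpha p} l(|k|)^p$ is regularly varying, the one-dimensional estimates of~\cite[Example 2.5]{nourdin-peccati-podolskij:2011:quantitative-breuer-major-theorems} apply verbatim and give $\norm{f_n \otimes_r f_n} = O(r_\alpha(n))$ for each $r = 1,\dots,p-1$. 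Summing the finitely many terms yields $\widetilde{M}(U_n) = O(r_\alpha(n))$.

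The delicate part is the Hilbert--Schmidt distance between the covariance operators. As covariance operators of $K$-valued variables are integral operators, $\norm{S - T_n}_{\on{HS}}$ equals the $L^2([0,1]^2)$ norm of the difference of their kernels; using $\E(H_p(X_i)H_p(X_j)) = p!\, \rho(i-j)^p$ and $\sigma^2 = p!\sum_{k\in\Z} \rho(k)^p$, this reads
\[
  \norm{S - T_n}_{\on{HS}}^2 = \int_0^1 \int_0^1 \left| \sigma^2 \min(s,t) - \frac{p!}{n} \sum_{i=0}^{\lfloor ns \rfloor} \sum_{j=0}^{\lfloor nt \rfloor} \rho(i-j)^p \right|^2 \d{s}\, \d{t}.
\]
The hard part is to show that this is $O(r_\alpha(n)^2)$. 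I would reindex by $k = i-j$, writing the double sum as $\frac{p!}{n}\sum_k N_n(k;s,t)\rho(k)^p$, where $N_n(k;s,t)$ is the number of pairs $(i,j)$ in the rectangle $[0,\lfloor ns\rfloor]\times[0,\lfloor nt\rfloor]$ with $i-j = k$, and then compare $n^{-1} N_n(k;s,t)$ with $\min(s,t)$. The resulting error splits into a truncation part $\sum_{|k|>cn} |\rho(k)|^p$, governed by the regularly varying tail, and a boundary (Riemann-sum) part coming from replacing $\lfloor ns\rfloor/n$ by $s$. Estimating both sharply requires Karamata's theorem and careful bookkeeping of the partial sums $\sum_{|k|\le M} |\rho(k)|^p$; the three ranges of $\alpha$ in the statement correspond exactly to whether these partial sums remain bounded, grow like a slowly varying function, or grow polynomially, and matching the output to $r_\alpha(n)$ in each regime without losing a factor is where the real work lies.

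Combining the two estimates gives $d_2(U_n,\sigma W) \leq \frac{1}{2}(\widetilde{M}(U_n) + \norm{S-T_n}_{\on{HS}}) = O(r_\alpha(n))$, which is~\eqref{eq:60}. I expect the contraction and covariance contributions to be of the same order in each regime, so that the stated rate is the maximum of the two; the principal obstacle is the regular-variation analysis of the covariance kernel, the contraction bound being essentially inherited from the scalar theory through the elementary domination $\beta_n \leq 1$.
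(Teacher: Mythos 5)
Your proposal is correct and follows essentially the same route as the paper: represent $U_n$ as a single $K$-valued multiple integral, apply the contraction bound of Theorem~\ref{thm:7}, dominate the $K$-valued contraction norms by the scalar Breuer--Major contractions via $\beta_n \leq 1$ so that the one-dimensional estimates of Nourdin--Peccati--Podolskij apply, and control $\left\lVert T_n - \text{(Gaussian covariance)} \right\rVert_{\on{HS}}$ as the $L^2([0,1]^2)$-norm of the difference of integral kernels. The only differences are cosmetic or a matter of completeness: the paper inserts an intermediate comparison $d_2(U_n,\sigma_n W)+d_2(\sigma_n W,\sigma W)$ with $\sigma_n^2$ the truncated variance (handled by Corollary~\ref{cor:1}) rather than comparing directly to $\sigma^2$, and it executes your sketched covariance-kernel estimate in Lemma~\ref{lem:6} --- the reindexing by $k=i-j$, the count of pairs in the rectangle, and Karamata's theorem yield a \emph{uniform} bound $\sup_{s,t}\left|k_n(s,t)\right| \leq C\left(n^{-1}+n^{\alpha p+1}l(n)\right)$, which dominates the $L^2$-norm and is of smaller order than $r_\alpha(n)$, exactly as your outline anticipates.
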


\begin{remark}
  \label{rmk:3}
  Theorem~\ref{thm:13} applies to the case where $X_i=B_{i+1}^H - B_i^H$ is the increment process of a fractional Brownian motion with Hurst index $H \leq \frac{2p-1}{2p}$. In this case, the corresponding bound reads
\begin{equation*}
  d_2(U_n,\sigma W) \leq C
  \begin{cases}
    n^{-1/2} & \qquad \qquad \text{if $H \in \left(0,\frac{1}{2}\right)$,}\\
    n^{H-1} & \qquad \qquad \text{if $H  \in  \left[ \frac{1}{2}, \frac{2p-3}{2p-2} \right]$}\\
    n^{(2pH-2p+1)/2} & \qquad \qquad \text{if $H  \in  \left( \frac{2p-3}{2p-2}, \frac{2p-1}{2p} \right)$}.
  \end{cases}
\end{equation*}
See~\cite[Example
2.6]{nourdin-peccati-podolskij:2011:quantitative-breuer-major-theorems}
for further details. See also \cite[Exercise
7.5.1]{nourdin-peccati:2012:normal-approximations-malliavin} for a
particular (and simpler) case where the function $\varphi$ is taken to be the $p$-th
Hermite polynomial $H_p$.
\end{remark}

\begin{proof}[Proof of Theorem~\ref{thm:13}]
  Throughout this proof, $C$ denotes a positive constant which might change from line to line.
  Let $\mathfrak{H}$ be the Hilbert space obtained by the closure of the set of all finite linear combinations of indicator functions $1_{[0,t]}$, $t \geq 0$ with respect to the inner product
\begin{equation*}
  \left\langle 1_{[0,s]}, 1_{[0,t]} \right\rangle_{\mathfrak{H}} = \rho ( t-s).
\end{equation*}
and let $\mathcal{X}$ be an isonormal Gaussian process on
$\mathfrak{H}$ (for details on this construction, see~\cite[Example 2.1.5]{nourdin-peccati:2012:normal-approximations-malliavin}).
Then
\begin{equation*}
  \E \left( \mathcal{X}(1_{[0,i]}) \mathcal{X}(1_{[0,j]}) \right)
  =
  \left\langle 1_{[0,i]},1_{[0,j]} \right\rangle_{\mathfrak{H}}
  =
  \rho ( j-i)
  =
  \E \left( X_iX_j \right),
\end{equation*}
where expectations are taken over the respective probability spaces of $\mathcal{X}$ and $X$. Furthermore, note that $U_n$ has the same law as
$I_p (f_{n,t})$, where
\begin{equation}
 \label{eq:54}
 f_{n,t}(x)
 = \frac{1}{\sqrt{n}}
 \sum_{i=1}^{\lfloor nt \rfloor -1}
 g_{p}(i,x)
 = \frac{1}{\sqrt{n}}
 \sum_{i=1}^{n-1}
 1_{[\frac{i+1}{n},1]}(t)
 g(i,x)
\end{equation}
and $g(i,x)=  \prod_{j=1}^p 1_{[0,i]}(x_j) \geq 0$.
Let us denote by $T_{n}$ the covariance operator of $U_{n}$ and define
\begin{equation*}
\sigma_{n}^2 = p! \sum_{k \in \mathbb{Z}} \rho(k)^p \left( 1 - \frac{\left| k \right|}{n} \right)
 1_{\left\{ \left| k \right| < n \right\}}.
\end{equation*}

Now
\begin{equation}
  \label{eq:25}
d_{2}
  \left( U_n,\sigma W \right)
  \leq
  d_{2}
  \left( U_n, \sigma_n W \right)
  +
  d_{2}
    \left( \sigma_nW,\sigma W \right),
  \end{equation}
and applying Corollary~\ref{cor:1} together with the identity~\eqref{eq:16}, we obtain
\begin{multline}
 \label{eq:26}
d_2(\sigma_nW,\sigma W)
 \le
 \frac{1}{2}
 \left\lVert \sigma_n^2 S - \sigma^2 S \right\rVert_{L^1(\Omega;\mathcal{S}_1(L^2([0,1])))}
 \\ =
 \frac{1}{2}
 \left| \sigma_n^2-\sigma^2 \right|
 \on{tr} S
 =
 \frac{1}{2}
 \left| \sigma_n^2 - \sigma^2 \right|
 \leq
 C
   \left( n^{-1} + n^{\alpha p+1}l(n) \right),
\end{multline}
 where the last inequality follows after a straightforward calculation, using the same estimate as in~\cite[Example 2.5]{nourdin-peccati-podolskij:2011:quantitative-breuer-major-theorems}.
Furthermore, by Theorem~\ref{thm:7},
\begin{equation}
  \label{eq:27}
  d_{2}
  \left( U_n,\sigma_n W \right)
 \leq
    \frac{1}{2}
\sum_{r=1}^{p -1} p
      \left\lVert f_{n,\cdot} \otimes_r f_{n,\cdot} \right\rVert_{\mathfrak{H}^{\otimes (2p-2r)} \otimes L^2([0,1])^{\otimes 2}}
+ \frac{1}{2}
       \left\lVert T_n - \sigma_n^2 S \right\rVert_{\on{HS}(L^2([0,1]))}.
\end{equation}
Lemma~\ref{lem:6} yields that
\begin{equation}
 \label{eq:37}
 \left\lVert T_n - \sigma_n^2 S \right\rVert_{HS}
 \leq
 C \,
   \left( n^{-1} + n^{\alpha p+1}l(n) \right).
  \end{equation}
Plugging~\eqref{eq:37} into~\eqref{eq:27}, then together
with~\eqref{eq:26} into~\eqref{eq:25} and noting that 
\begin{equation*}
\frac{n^{-1 \lor
  (1-\alpha p)}}{r_{\alpha}(n)} \to 0
\end{equation*}
as $n \to \infty$, it remains to show that
\begin{equation}
  \label{eq:30}
\sum_{r=1}^{p -1}
      \left\lVert f_{n,\cdot} \otimes_r f_{n,\cdot} \right\rVert_{\mathfrak{H}^{\otimes (2p-2r)} \otimes L^2([0,1])^{\otimes 2}}
\leq C \, r_{\alpha}(n).
\end{equation}
Now, as for any $s_1,s_2 \in [0,1]$, it holds that
\begin{equation*}
  \left\langle 1_{[\frac{s_1}{n},1]}(\cdot),1_{[\frac{s_2}{n},1]}(\cdot) \right\rangle_{L^2([0,1])}
  \leq 1,
\end{equation*}
we have that for $r=1,\ldots,p \land q$,
\begin{equation*}
  \left\lVert f_{n,t} \otimes_r f_{n,t} \right\rVert_{\mathfrak{H}^{\otimes (p+q-2r)} \otimes L^2([0,1])^{\otimes 2}}
  \leq
  \left\lVert f_{n,1} \otimes_r f_{n,1} \right\rVert_{\mathfrak{H}^{\otimes (p+q-2r)}}.
\end{equation*}
In other words, the contraction norms of the kernels of the stochastic
process $\left\{ U_n(t) \colon t \in [0,1] \right\}$ are bounded by those of the random variable $U_n(1)$, so that~\eqref{eq:30} follows from the one-dimensional calculations in~\cite[Example 2.5]{nourdin-peccati-podolskij:2011:quantitative-breuer-major-theorems}.
\end{proof}

\begin{lemma}
  \label{lem:6}
  In the setting of Theorem \ref{thm:13}, it holds that
  \begin{equation}
 \label{eq:57}
 \left\lVert T_n - \sigma_n^2 S \right\rVert_{\on{HS}(L^2([0,1]))} \leq
   C \,
  \left( n^{-1} + n^{\alpha p+1}l(n) \right).
\end{equation}
\end{lemma}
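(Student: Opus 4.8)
The plan is to pass to kernels. Since $T_n$ and $S$ are the covariance operators of the $L^2([0,1])$-valued random variables $U_n$ and the Brownian motion $W$, they are integral operators on $L^2([0,1])$ whose kernels are the respective covariance functions, namely $K_n(s,t)=\E\!\left(U_n(s)U_n(t)\right)$ and $\min(s,t)$. Hence $T_n-\sigma_n^2 S$ is the integral operator with kernel $R_n(s,t)=K_n(s,t)-\sigma_n^2\min(s,t)$, and its Hilbert--Schmidt norm equals the $L^2([0,1]^2)$-norm of that kernel,
\begin{equation*}
\left\lVert T_n-\sigma_n^2 S\right\rVert_{\on{HS}(L^2([0,1]))}^2=\int_0^1 \int_0^1 R_n(s,t)^2\d s\d t.
\end{equation*}
It therefore suffices to prove the \emph{uniform} pointwise bound $\sup_{s,t\in[0,1]}\abs{R_n(s,t)}\le C\left(n^{-1}+n^{\alpha p+1}l(n)\right)$, since integrating its square over the unit square immediately yields~\eqref{eq:57}.

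To obtain this, I would first compute $K_n$ explicitly. Using the Hermite orthogonality relation $\E\!\left(H_p(X_i)H_p(X_j)\right)=p!\,\rho(i-j)^p$ together with the representation of $U_n$, one gets
\begin{equation*}
K_n(s,t)=\frac{p!}{n}\sum_{i=0}^{\lfloor ns\rfloor}\sum_{j=0}^{\lfloor nt\rfloor}\rho(i-j)^p.
\end{equation*}
Up to a single boundary index, $\sigma_n^2$ is precisely the triangular (Fej\'er) weighting of $\sum_k\rho(k)^p$ appearing in $K_n(1,1)$, so that $R_n(1,1)=O(n^{-1})$; this is exactly why comparing against $\sigma_n^2$ rather than $\sigma^2$ is advantageous, as the leading terms then cancel. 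Assuming $s\le t$ (the case $s\ge t$ being symmetric), I would split the inner sum at $\lfloor ns\rfloor$ and write $K_n(s,t)=K_n(s,s)+X_n(s,t)$, where $X_n(s,t)$ collects the cross contributions of the index blocks $[0,\lfloor ns\rfloor]$ and $(\lfloor ns\rfloor,\lfloor nt\rfloor]$.

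The two error sources are then handled separately. Reorganizing $K_n(s,s)$ by the lag $k=i-j$ turns it into a Fej\'er sum with window $m=\lfloor ns\rfloor+1$; comparing it with $\sigma_n^2\min(s,t)=\sigma_n^2 s$ produces, on the one hand, a discretization discrepancy $\abs{s-m/n}\le n^{-1}$ which (since $\sigma_n^2$ stays bounded) contributes the $n^{-1}$ term, and, on the other hand, the difference of two triangular sums with windows $m$ and $n$, which Karamata's theorem for slowly varying functions bounds by $Cn^{\alpha p+1}l(n)$. For the cross term, counting the pairs carrying a fixed lag $\ell=j-i\ge 1$ gives a multiplicity bounded by $\min(\ell,n)$, whence $\abs{X_n(s,t)}\le \frac{C}{n}\sum_{\ell\ge1}\min(\ell,n)\,\rho(\ell)^p$; splitting this sum at the bandwidth and invoking once more the slow-variation asymptotics (legitimate precisely because $\alpha<-1/p$, i.e.\ $\alpha p+1<0$) bounds it by $Cn^{\alpha p+1}l(n)$ as well. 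Collecting the three contributions gives the claimed uniform bound, and hence~\eqref{eq:57}.

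The main obstacle is the bookkeeping underlying the pointwise estimate: accurately counting the lattice points carrying a prescribed lag across the two distinct and unequal ranges $[0,\lfloor ns\rfloor]$ and $[0,\lfloor nt\rfloor]$, and then extracting the sharp rate from the slowly varying tails. This is exactly where the argument meets the one-dimensional Breuer--Major computation; after the reorganization the required scalar tail and edge estimates coincide with those already carried out in~\cite[Example 2.5]{nourdin-peccati-podolskij:2011:quantitative-breuer-major-theorems}, so that they can be quoted rather than redone.
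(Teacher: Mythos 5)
Your proposal is correct and follows essentially the same route as the paper: identify $T_n-\sigma_n^2 S$ as a Hilbert--Schmidt integral operator, bound its $\on{HS}$-norm by the sup-norm of the kernel $\E\left(U_n(s)U_n(t)\right)-(s\wedge t)\sigma_n^2$, compute that kernel via orthogonality (equivalently $p!\,\rho(i-j)^p$), reorganize by lag, and invoke Karamata's theorem and the one-dimensional estimates of Nourdin--Peccati--Podolskij. The paper's bookkeeping splits the lag sum by sign into three pieces rather than splitting off a cross term at $\lfloor ns\rfloor$, but this is an immaterial difference in organization.
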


\begin{proof}
  The operator $K_n = T_{n}-\sigma_n^2 S$ is a Hilbert-Schmidt integral operator of the form $K_nf (t) = \int_0^1 k_n(s,t) f(s) \d{s}$, with kernel $k_n$ given by
  \begin{equation*}
    k_n(s,t) =
    \E \left( U_n(s) U_n(t) \right) - (s \land t) \sigma_n^2.
  \end{equation*}
   Note that by orthogonality
   \begin{equation}
      \label{eq:38}
\E \left( U_n(s) U_n(t) \right)
 =
      p!
       \left\langle
       f_{n,s}
       ,
       f_{n,t}
       \right\rangle_{\mathfrak{H}^{\otimes p}},
\end{equation}
where the kernels $f_{n,\cdot}$ are given by \eqref{eq:54}.
Now
\begin{align}
       \left\langle
       f_{p,n,s}
       ,
       f_{p,n,t}
  \right\rangle_{\mathfrak{H}^{\otimes p}}
  &=
    \notag
    \frac{1}{n}
 \sum_{i,j=1}^{n}
    1_{[\frac{i}{n},1]}(s)
    1_{[\frac{j}{n},1]}(t)
    \rho \left( \left| i-j \right| \right)^p
  \\ &=
       \notag
       \frac{1}{n}
       \sum_{i=1}^{n}
       \sum_{j=1-i}^{n-i}
      1_{[\frac{i}{n},1]}(s)
      1_{[\frac{j+i}{n},1]}(t)
       \rho \left( \left| j \right| \right)^p
  \\ &=
       \notag
       \frac{1}{n}
       \sum_{j=-(n-1)}^{n-1}
       \sum_{i=1}^{n}
       1_{[1-j,n-j]}(i)
      1_{[\frac{i}{n},1]}(s)
      1_{[\frac{j+i}{n},1]}(t)
       \rho (\left| j \right|)^p
  \\ &=
       \notag
       \frac{1}{n}
       \sum_{j=-(n-1)}^{n-1}
       \sum_{i=1}^{n}
       1_{[1-j,n-j]}(i)
      1_{[\frac{i}{n},1]}(s)
      1_{[\frac{j+i}{n},1]}(t)
       \rho (\left| j \right|)^p
  \\ &=
       \label{eq:43}
A_n + B_n + C_n,
\end{align}
where the terms $A_n$, $B_n$ and $C_n$ are obtained by decomposing the sum  over $j$ according to
\begin{equation*}
  \sum_{j=-(n-1)}^{n-1}\beta(j) = \sum_{j=-(n-1)}^{-1}\beta(j)  + \beta(0) + \sum_{j=1}^{n-1}\beta(j),
\end{equation*}
where 
\begin{equation*}
\beta(j) = \frac{1}{n}\sum_{i=1}^{n}
       1_{[1-j,n-j]}(i)
      1_{[\frac{i}{n},1]}(s)
      1_{[\frac{j+i}{n},1]}(t)
       \rho (\left| j \right|)^p.
\end{equation*}
Now, we have
\begin{align*}
  A_n &=
  \frac{1}{n}
  \sum_{j=-(n-1)}^{-1} \# \left\{  1 \leq i \leq n \colon 1-j\leq i, \, i \leq ns, \, i \leq nt-j \right\}
  \rho \left( \left| j \right| \right)^p
  \\ &=
  \frac{1}{n}
       \sum_{j=-(n-1)}^{-1}
     \left(  \lfloor ns \land  \left( nt - j \right)\rfloor +j \right)
       \rho \left( \left| j \right| \right)^p
  \\ &=
       \sum_{j=-(n-1)}^{-1}
       \rho \left( \left| j \right| \right)^p
       \times
       \begin{cases}
         \frac{\lfloor ns \rfloor}{n} + \frac{j}{n} &  \text{if $t-s > \frac{j}{n}$}
         \\
         \frac{\lfloor nt \rfloor}{n} &   \text{if $t-s \leq \frac{j}{n}$}
       \end{cases},
  \\
  B_n &= \frac{1}{n}  \# \left\{  1 \leq i \leq n \colon i \leq n (s
      \land t) \right\}
      =
      \frac{\lfloor n (s \land t) \rfloor}{n}
  \\
  \intertext{and}
  C_n &= \frac{1}{n}
  \sum_{j=1}^{n-1} \# \left\{  1 \leq i \leq n \colon i \leq n-j, \, i \leq ns, \, i \leq nt-j \right\}
      \rho \left( \left| j \right| \right)^p
  \\ &= \frac{1}{n}
  \sum_{j=1}^{n-1} \left( (n-j) \land \lfloor ns \land (nt-j) \rfloor \right)
       \rho \left( \left| j \right| \right)^p
       \\ &= \sum_{j=1}^{n-1}
            \rho \left( \left| j \right| \right)^p \times
            \begin{cases}
         \frac{\lfloor ns \rfloor}{n}  &  \text{if $t-s >
           \frac{j}{n}$}
         \\
         \frac{\lfloor nt \rfloor}{n} - \frac{j}{n}  &   \text{if $t-s \leq
           \frac{j}{n}$}
       \end{cases}.
\end{align*}
Plugging~\eqref{eq:43} into~\eqref{eq:38} and using formula~\eqref{eq:32} for $\sigma_n$,
this yields
\begin{multline*}
  \E \left( U_n(s)U_n(t) \right)
  -
  (s \land t)\sigma_n
  \\ =
  p!
  \left(
    A_n + B_n + C_n -   (s \land t)
      \sum_{j=-(n-1)}^{n-1}
  \rho \left( \left| j \right| \right)^p
  \left( 1 - \frac{\left| j \right|}{n} \right)
  \right).
\end{multline*}
and after a tedious but straightforward calculation (similarly as in
\cite[Proof of Theorem 2.2]{nourdin-peccati-podolskij:2011:quantitative-breuer-major-theorems} ), one arrives at
\begin{align*}
  \left| k_n(s,t) \right|
  =
   \left|
  \E \left( U_n(s)U_n(t) \right)
  -
  (s \land t)\sigma_n
  \right|
  &\lesssim \frac{1}{n}
  \left(
  1
  +
  p! \sum_{j=1}^{n-1} j \rho(\left| j \right|) ^p
  \right)
  \\ &\lesssim
       n^{-1} + n^{\alpha p + 1}l(n),
\end{align*}
where we have used Karamata's theorem to obtain the last estimate
(see~\cite[Example
2.5]{nourdin-peccati-podolskij:2011:quantitative-breuer-major-theorems}
for details).
Consequently,
\begin{equation*}
  \left\lVert T_n - \sigma_n^2 S \right\rVert_{\on{HS}(L^2([0,1]))}
  =
  \left\lVert k_n \right\rVert_{L^2([0,1]^2)}
  \leq
  \sup_{s,t \in [0,1]} \left| k_n(s,t) \right|
  \leq
  C \,
  \left( n^{-1} + n^{\alpha p+1}l(n) \right)
  \end{equation*}
  as asserted.
\end{proof}


\end{document}